 \definecolor{white}{rgb}{1,1,1}
\newtheorem{theorem}{Theorem}
\newtheorem*{thm_mf}{Theorem (Mario's formula)}
\newtheorem{corollary}[theorem]{Corollary}
\newtheorem{proposition}[theorem]{Proposition}
\newtheorem{lemma}[theorem]{Lemma}
\theoremstyle{definition}
\newtheorem*{remark}{Remark}
\newtheorem*{remarks}{Remarks}
\newtheorem*{summary}{Summary}
\newtheorem*{notation}{Notation}
\newtheorem*{conclusion}{Conclusion}
\newcommand{\gamz}{\gamma_0}
\newcommand{\gam}{\gamma_{12}}
\newcommand{\gamp}{\gamma'_{12}}
\newcommand{\kermat}{\mathbf{ K}}
\newcommand{\pari}{\partial_i}
\newcommand{\parj}{\partial_j}
\newcommand{\idmatrix}{\mathbb I}
\newcommand{\invg}{Q}
\newcommand{\vastb}{v^\ast(\overline{q})}
\newcommand{\vasth}{v^\ast(\hat{q})}
\newcommand{\gul}[2]{g^{#1}_{\textcolor{white}{#1},#2\ }}
\begin{document}
\begin{center}
{\large\bf Sectional Curvature in terms of the Cometric,
with 
\\ \vspace{.2cm}
\mbox{Applications to the Riemannian Manifolds of Landmarks}}
\par\vspace{1cm}
\hspace*{-.8cm}
\begin{tabular}{ccc}
Mario Micheli
& Peter W.~Michor
& David Mumford \\
Department of Mathematics
& Fakult\"at f\"ur Mathematik
& Div.~of Applied Mathematics \\
Univ.~of California, Los Angeles
& Universit\"at Wien 
& Brown University \\
520 Portola Plaza
& Nordbergstrasse 15
& 182 George Street \\
Los Angeles, CA 90095, USA
& A-1090 Wien, Austria
& Providence, RI 02012, USA\\
\tt micheli@math.ucla.edu
&\tt Peter.Michor@univie.ac.at
&\tt David\_Mumford@brown.edu\\
\end{tabular}
\end{center}
\par\vspace{.3cm}
\par\noindent{\bf Keywords:} shape spaces, landmark points, 
cometric, sectional curvature.
\vspace*{.1in}
\par\noindent {\bf Acknowledgements}: MM was supported by ONR grant N00014-09-1-0256, 
PWM was supported by FWF-project 21030, 
DM was supported by 
NSF grant DMS-0704213,
and all authors were supported by 
NSF grant DMS-0456253 (Focused Research Group: 
The geometry, mechanics, and statistics of the infinite dimensional
shape manifolds).
MM would like to thank
Andrea Bertozzi of UCLA for her continuous advice and
and support.
\begin{abstract}
This paper deals with the computation of sectional curvature
for the manifolds of~$N$ landmarks (or feature points)
in~$D$ dimensions, endowed
with the Riemannian metric induced by the group action of 
diffeomorphisms. The inverse of the 
metric tensor for these manifolds (i.e.~the cometric),
when written in coordinates, is such that each of its
elements depends on at most~$2D$ of the~$ND$ coordinates.
This makes the matrices of partial derivatives of the cometric
very sparse in nature, thus suggesting solving the highly non-trivial problem of developing a formula
that expresses sectional curvature in terms
of the cometric and its first and second
partial derivatives (we call this Mario's formula). We apply such formula to the manifolds of landmarks and in particular
we fully explore the case of 
geodesics on which only two points have non-zero momenta and 
compute the sectional curvatures of 2-planes 
spanned by the tangents to such geodesics. 
The latter example gives insight to the geometry of the full manifolds
of landmarks.
\end{abstract}
\section{Introduction}
In the past few years there has been a growing interest, in diverse scientific communities, in modeling \em shape spaces \em as Riemannian manifolds. The study of shapes and their similarities is in fact central in computer vision and related fields (e.g.~for object recognition, target detection and tracking, classification of biometric data, and automated   
medical diagnostics), in that it allows one to recognize and classify objects from their representation. In particular, a \em distance \em function between shapes should express the meaning of \em similarity \em between them for the application that one has in mind. One of the most mathematically sound and tractable methods for defining a distance on a manifold is to measure infinitesimal distance by a Riemannian structure and global distance by the corresponding lengths of geodesics. 
\par
Among the several ways of endowing a shape manifold with a Riemannian structure (see, for example,~\cite{kendall:metrics, klassen, kushnarev:1, michormumford:1, sharonmumford:2, 
sundaramoorthi:1}), one of the most natural is inducing it through the action of the infinite-dimensional Lie group of diffeomorphisms of the manifold ambient to the shapes being studied. You start by putting a right-invariant metric on this diffeomorphism group, as described in~\cite{miller:1}. Then fixing a base point on the shape manifold, one gets a surjective map from the group of diffeomorphisms to the shape manifold. The right-invariance of the metric ``upstairs'' implies that we get a quotient metric on the shape manifold for which this map is a {\em submersion} (see below). This approach can be used to define a metric on very many shape spaces, such as the manifolds of curves~\cite{glaunes:3, michormumford:3}, surfaces~\cite{zhang_s}, scalar images~\cite{beg:2}, vector fields~\cite{cao:2}, diffusion tensor images~\cite{cao:1}, measures~\cite{glaunes.phd,glaunes:2}, and labeled landmarks (or ``feature points'')~\cite{glaunes:1,joshi}. The actual geometry of these Riemannian manifolds has remained almost completely unknown until very recently, when certain fundamental questions about their curvature have started being addressed~\cite{michormumford:1, michormumford:3, michormumford:4}. 
\par
Among all shape manifolds, the simplest case of the manifold of landmarks in Euclidean space plays a central role. This is defined as
$$ 
\mathcal{L}^N(\mathbb{R}^D) := \Big\{(P^1,\ldots,P^N)\,\big| \, 
P^a\in \mathbb{R}^D,\; a=1,\ldots,N \Big\}.
$$  
(typically we consider landmarks $P^a$, $a=1,\ldots,N$ that do not coincide pairwise).
It is finite-dimensional, albeit with high dimension $n=N D$, where~$N$ is the number of landmarks and $D$ is the dimension of the ambient space in which they live (e.g.~$D=2$ for the plane). Therefore its metric tensor may 
be written, in any set of coordinates, as a finite-dimensional matrix. This space is important in the study of all other shape manifolds because of a simple property of submersions: for any submersive map $f:X \rightarrow Y$, all geodesics on $Y$ lift to geodesics on $X$ and give you, in fact, all geodesics on $X$ which at one and hence all points are perpendicular to the fiber of $f$ (so called ``horizontal'' geodesics). This means that geodesics on the space of landmarks lift to geodesics on the diffeomorphism group and then project down to geodesics on all other shape manifolds associated to the same underlying ambient space $\mathbb{R}^D$. Thus geodesics of curves, surfaces, etc.~in $\mathbb{R}^D$ can be derived from geodesics of landmark points. Technically, these are the geodesics on these shape manifolds whose momentum has finite support. This efficient way of constructing geodesics on many shape manifolds has been exploited in much recent work, e.g.~\cite{allassonniere:07,durrleman:11,sommer:11}.
\par
What sort of metrics arise from submersions? Mathematically, the key point is that the inverse of the metric tensor, the inner product on the cotangent space hence called the co-metric, behaves simply in a submersion. Namely, for a submersion $f:X \rightarrow Y$, the co-metric on $Y$ is simply the restriction of the co-metric on $X$ to the pull-back 1-forms. Therefore, for the space of landmarks the cometric has a simple structure. In our case, we will see that each of its elements depends only on at most~$2D$ of the~$N\!D$ coordinates. Hence the matrices obtained by taking first and second partial derivatives of the cometric have a very \em sparse \em structure --- that is, most of their entries are zero. This suggests that for the purpose of calculating curvature (rather than following the ``classical'' path of computing first and second partial derivatives of the metric tensor itself, the Christoffel symbols, et cetera) it would be convenient to write sectional curvature in terms of the inverse of the metric tensor and its derivatives. We have solved the highly non-trivial problem of developing a formula (that we call ``Mario's formula'') precisely for this purpose: for a given pair of cotangent vectors this formula expresses the corresponding sectional curvature as a function of the cometric and its first and second partial derivatives except for one term which requires the metric (but not its derivatives). This formula is closely connected to O'Neill's formula which, for any submersion as above, connects the curvatures of $X$ and $Y$. Subtracting Mario's formula on $X$ and $Y$ gives O'Neill's as a corollary.
\par
This paper deals with the problem of computing geodesics and sectional curvature for landmark spaces, and is based on results from the thesis of the first author \cite{micheli.phd}. The paper is organized as follows. We first give a few more details about the manifold of landmarks, and describe the metric induced by the action of the Lie group of diffeomorphisms. We then give a proof for the general formula expressing sectional curvature in terms of the cometric. This formula is used in the following section to compute the sectional curvature for the manifold of labeled landmarks. In the last section, we analyze the case of geodesics on which only two points have non-zero momenta and the sectional curvatures of 2-planes made up of the tangents to such geodesics. In this case, both the geodesics and the curvature are much simpler and give insight into the geometry of the full landmark space.

\section{Riemannian Manifolds of Landmarks}
\label{Mot_Land}
In this section we briefly summarize how the shape space of landmarks 
can be given the structure of a Riemannian manifold. We refer the reader to~\cite{miller:1, Younes10} for the general framework on how to endow \em generic \em shape manifolds with a Riemannian metric via the action of Lie groups of diffeomorphisms. 

\subsection{Mathematical preliminaries}
\label{se:prel}
We will first define a \em distance \em function~$d:\mathcal{L}^N(\mathbb{R}^D) \times\mathcal{L}^N(\mathbb{R}^D) \rightarrow\mathbb{R}^+$ on landmark space
which will then turn out to be the geodesic distance with respect to a Riemannian metric. Let~$\mathcal{Q}$ be the set of differentiable landmark {\it paths\/}, that is: 
$$
\mathcal{Q} := \Big\{ q = (q^1,\ldots,q^N): [0,1]\rightarrow 
\mathcal{L}^N(\mathbb{R}^D) \, \Big| \, q^a\in 
C^1\big([0,1],\mathbb{R}^D\big), a=1,\ldots,N \,\Big\}.
$$
Following \cite[Chapters 9, 12, 13]{Younes10}, a Hilbert space~$\big(V,\langle\quad,\quad\rangle_V\big)$ of vector fields on Euclidean space (which we consider as functions $\mathbb{R}^D\rightarrow \mathbb{R}^D$) is said to be \em admissible \em if (i)~$V$~is continuously  embedded in the space of $C^1$-mappings on $\mathbb R^D\to \mathbb R^D$ which are bounded together with their derivatives, (ii)~$V$ is large enough: For any positive integer~$M$, if 
$x_1,\ldots, x_M\in \mathbb{R}^D$  and 
$\alpha_1,\ldots, \alpha_M\in \mathbb{R}^D$ are such that, for all~$u\in V$, 
$\sum_{a=1}^M\big\langle\alpha_a,u(x_a) \big\rangle_{\mathbb{R}^D}=0$,
then $\alpha_1=\ldots=\alpha_M=0$.
\par
The space $(V,\langle \quad,\quad \rangle_V)$ admits a \em reproducing kernel\em:
that is, for each $\alpha,x\in \mathbb R^D$ there exists $K_x^\alpha\in V$ with 
$\langle K_x^\alpha, f \rangle_V
=
\langle\alpha,f(x)\rangle_{\mathbb R^D}$ 
for all $f\in V$. Further, 
$
\langle K_y^\beta,K_x^\alpha \rangle_V
=
\langle \beta,K_x^\alpha(y) \rangle_{\mathbb R^D} 
= 
\langle \alpha,K_y^\beta(x) \rangle_{\mathbb R^D} 
$ 
which is a  bilinear form in 
$(\alpha,\beta)\in (\mathbb R^D)^2$, thus given by a $D\times D$
matrix 
$K(x,y)$;
the symmetry of the inner product implies that~$K(y,x)=K(x,y)^T$
(where~$^T$ indicates the transpose).
In this paper we shall assume that $K(x,y)$ is a multiple of the identity and is translation invariant: we then write $K(x,y)$ simply as $K(x-y)
\,\mathbb{I}_D$ (where~$\mathbb{I}_D$ is the~$D\times D$ identity matrix); 
the 
scalar reproducing kernel~$K:\mathbb{R}^D\rightarrow \mathbb{R}$
must be \em symmetric\em\/, and \em positive definite \em (see~\cite[\S9.1]{Younes10} for details). 
\par
There are other very natural admissible norms on vector fields~$v$ whose kernels are not multiples of the identity, e.g.~one can add a multiple of $\operatorname{div}(v)^2$ to any norm and then $K$ will intertwine different components of $v$. The most natural examples of the norms we will consider are given by inner products   
\begin{equation}
\label{eq:Lnorm}
\langle u, v\rangle_V = \langle u, v\rangle_L
:= \int_{\mathbb{R}^D} \big\langle Lu(x), v(x)\big\rangle_{\mathbb{R}^D}\ dx,
\end{equation}
where~$L$ is a self-adjoint elliptic scalar differential operator of order 
greater than $ D+2$ with constant coefficients which is applied separately to each of the scalar components of the vector field~$u=(u^1,\ldots,u^D)$. By the Sobolev embedding theorem then $V$ consists of $C^1$-functions on $\mathbb R^D$ which are bounded together with their derivatives. If $K$ is a scalar fundamental solution (or Green's function~\cite{evans}) so that $L(K)(x) = \delta(x)$, then the reproducing kernel is given by $K_x^\alpha = K(\quad -x)\,\alpha$.
A possible choice of the operator is $L=(1-A^2\Delta)^k$ (where $A\in\mathbb{R}$ is a scaling factor,  $k\in \mathbb{N}$ and~$\Delta$ is the Laplacian operator), with~$k>\frac{D}{2}+1$, in which case~\eqref{eq:Lnorm} becomes the Sobolev norm:
\begin{equation}
\label{eq:SoNorm}
\|u\|_L^2 = \int_{\mathbb{R}^D} \sum_{\ell=1}^D \sum_{m=0}^k 
\binom{k}{m} A^{2m} \sum_{|\alpha|=m} \big| D^\alpha u^\ell \big|^2 \ dx,
\end{equation}
When $L=(1-A^2\Delta)^k$  the scalar kernel~$K$ has 
the form~$K(x-y)=\gamma\big(\|x-y\|_{\mathbb{R}^D}\big)$, with:
\begin{equation}\label{Besselkernel}
\gamma(\varrho) = 
\frac{1}{2^{k+\frac{D}{2}-1}\pi^{\frac{D}{2}}\,\Gamma(k)
\,A^D}
\Big(\frac{\varrho}{A}\Big)^{k-\frac{D}{2}}\,
K_{k-\frac{D}{2}}\!\big(\frac{\varrho}{A}\big), \quad\varrho> 0 ,
\end{equation}
where $K_\nu$ (with~$\nu=k-\frac{D}{2}$)  is a modified Bessel function~\cite{abramowitz}
of order~$\nu$ (\em not \em to be confused with the symbol~$K$ we use for the kernel of~$V$).
\par
In summary, the scalar kernels that we consider in this paper will \em always \em have the properties:
\par\vspace{.1cm}
\noindent{\bf (K1)} 
$K$ is \em positive definite\em\/;
\par
\noindent{\bf (K2)} 
$K$ is \em symmetric\em\/, i.e.~$K(x)=K(-x)$, $x\in\mathbb{R}^D$.
\par\vspace{.1cm}
\noindent In addition, 
in certain sections we will  introduce 
the following simplifying
assumptions: 
\par\vspace{.1cm}
\noindent{\bf (K3)} $K$ is \em twice continuously differentiable\em\/, 
$K\in C^{2}(\mathbb{R}^D)$;
\par
\noindent{\bf (K4)} $K$ is \em rotationally invariant\em\/, 
i.e.~$K(x) = \gamma(\|x\|_{\mathbb{R}^D})$, $x\in\mathbb{R}^D$,
for some $\gamma \in C^2
\big([0,\infty)\big)$.
\par\vspace{.1cm}
\noindent
Note that if (K4) holds then $\gamma(0)\geq|\gamma(\rho)|$ for all $\rho\geq0$ by (K1) and (K2). 
Also, the bell-shaped Bessel kernels of the type~\eqref{Besselkernel}
satisfy all of the above when~$k>\frac{D}{2}+1$.
\par
Now fix any admissible Hilbert space of vector fields. The space~$L^p([0,1],V)$
is the set of functions~$v:[0,1]\rightarrow V$ such that: 
$$\|v\|_{L^p([0,1],V)}:=
\Big( \int_0^1 \|v(t,\;\;\;) \|^p_V\ dt \Big)^{\frac{1}{p}} <\infty. $$ 
The space $L^2([0,1],V)$ is a subset of $L^1([0,1],V)$
and is in fact a Hilbert space with inner product
$\langle  u,v\rangle  _{L^2([0,1],V)} := \int_0^1 \langle  u,v\rangle_V\ dt $.
It is well known from the theory of ordinary differential 
equations~\cite{chicone} 
that for any $v\in L^1([0,1],V)$, the $D$-dimensional non-autonomous dynamical system $\dot{z} = v_t(z)$, with initial condition $z(t_0) = x$, has a unique solution of the type~$z(t)=\psi(t,t_0,x)$. 
Let~$\varphi_{st}^v(x):=\psi(t,s,x)$; fixing~$t=1$ and $s=0$ we get
$\varphi^v:=\varphi_{01}^v$, which is the \em diffeomorphism generated by\em~$v$. For an admissible Hilbert space we will call the set
$$
\mathcal{G}_V :=  \big\{ \varphi^v: v\in L^1\big([0,1],V\big) \big\}
$$
the \em group of diffeomorphisms \em generated by~$V$; by \cite[Chapter 12]{Younes10} it is a metric space and a topological group. But, in the language of manifolds, $\mathcal{G}_V$ is {\bf not} an infinite-dimensional Lie group~\cite{michor:cs}. $V$ is not a Lie algebra, but is the completion of the Lie algebra of $C^\infty$-vector fields with compact support with respect to $\|\quad\|_V$.  

\subsection{Definition of the distance function}
For velocity vector fields~$v\in L^2([0,1],V)$ and landmark trajectories
$q\in\mathcal{Q}$ define the energy
\begin{align}
E_\lambda[v,q]\equiv E[v,q] := \int_0^1 \Big(\big\| v(t,\quad)\big\|_V^2
+ \lambda \sum_{a=1}^{N} \Big\| \frac{dq^a}{dt}(t) -v\big(t,q^a(t) \big) 
\Big\|_{\mathbb{R}^D}^2\Big) \ dt ,
\label{Evq}
\end{align}
where~$\lambda\in (0,\infty]$ is a fixed \em smoothing parameter \em (soon to be described). We claim that  a distance function~$d$ on~$\mathcal{L}^N(\mathbb{R}^D)$  between two  landmark sets (or shapes)
$I=(x^1,x^2,\ldots,x^N)$ and  $I'=(y^1,y^2,\ldots,y^N)$ can be defined  as 
\begin{equation}
\label{lan_dist}
d(I,I') := \inf_{v,q}\Big\{\sqrt{E[v,q]}: v\in L^2\big([0,1],V\big),\,  
	q\in\mathcal{Q}\mbox{ with } q(0)=I, \, q(1)=I'\Big\};
\end{equation}
in the next subsection we will argue that the above function is in fact a \em geodesic distance \em with respect to a Riemannian metric. We treat the minimization of~\eqref{Evq} as our starting point; 
it is the ``energy of a metamorphosis'' as formulated
in~\cite[Chapter~13]{Younes10}.
%
\par
The above infimum is computed over all differentiable landmark paths~$q\in \mathcal{Q}$ that satisfy the boundary conditions
($q^a(0)=x^a$ and $q^a(1)=y^a$, $a=1,\ldots,N$), and vector fields
$v\in L^2([0,1],V)$. The resulting landmark trajectories
$\{q^a(t),t\in[0,1]\}_{a=1,\ldots,N}$ follow the minimizing velocity field
more or less exactly, depending on the value of the smoothing
parameter~$\lambda\in(0,\infty]$; it is a weight between the first term, 
that measures the smoothness of the vector field that generates the diffeomorphism,  and the second term, that measures how closely the landmark trajectories actually follow the vector field. 
\par
The \em exact matching problem \em is the following: given two sets of 
landmarks 
$I=(x^1,x^2,\ldots,x^N)$ and  $I'=(y^1,y^2,\ldots,y^N)$~with 
$x^a\not=x^b$ and $y^a\not=y^b$ for any~$a\not=b$, minimize the energy
$$
E_\infty[v] := \int_0^1 \|v(t,\quad)\|_V^2 \, dt
$$
among all~$v\in L^2([0,1],V)$ such that~$\varphi^v(x^a)=y^a$, $a=1,\ldots,N$. In this case
the landmark trajectories are defined as the solutions to the ordinary differential 
equations~$\dot{q}^a=v(t,q^a)$, $a=1,\ldots,N$. Note that this is equivalent to solving~(\ref{Evq})
for~$\lambda=\infty$, since such equations
are obtained by setting the integrands of the second term in the right-hand 
side of~\eqref{Evq} equal to zero. When~$\lambda<\infty$ in~(\ref{Evq}) we have \em regularized \em matching,
i.e.~the landmark trajectories ``almost'' satisfy such set of ordinary differential equations; this allows for the time varying vector field to be smoother. For this reason the second term in
the right-hand side of~\eqref{Evq} is often referred to as {\it smoothing term\/}; by allowing smoother vector fields the distance~$d$ is made tolerant to small diffeomorphisms and therefore more robust to object variations due to noise in the data.
%
%
%
%
%

\subsection{Minimizing velocity fields and Riemannian formulation}
By manipulating expression~\eqref{Evq} we will now show that it is equivalent to the energy of a path $q\in\mathcal{Q}$ with respect to a Riemannian metric. 
\begin{notation}
Consider a landmark~$q=(q^1,\ldots,q^N)$ in~$\mathcal{L}^N(\mathbb{R}^D)$.
The~$D$ scalar components in Euclidean coordinates of the~$N$ landmark trajectories $q^a=(q^{a1},\ldots,q^{aD})$, $a=1,\ldots,N$ can be ordered either into an~$N\times D$ matrix or in a tall concatenated column vector. We shall always use indices $a,b,c,\ldots \in \{1,\dots,N\}$ as {\it landmark indices}, and $i,j,k,\ldots \in\{1,\dots, D\}$ as {\it space coordinates} in $\mathbb R^D$. We will associate to each of the~$N$ landmarks~$q^a\in\mathbb{R}^{1\times D}$ a \em momentum\em~$p_a\in\mathbb{R}^{1\times D}$ (defined in the next proposition) which we will write, in coordinates, as~$p_a=(p_{a1},\ldots,p_{aD})$, for each~$a=1,\ldots,N$. The components of momenta can also be ordered into an~$N\times D$ matrix or in a long row vector. We chose \em superscript \em indices for landmark coordinates and \em subscript \em indices for momenta.
\par
For a given set of landmarks~$(q^1,\ldots,q^N)\in\mathcal{L}^N(\mathbb{R}^D)$
we will define the symmetric~$N\times N$ matrix
$\kermat(q):= \bigl(K(q^a-q^b)\bigr)_{a,b=1,\dots,N}$.
The matrix~$\kermat(q)$ is positive definite by property (K1) of the kernel. 
\end{notation}
\begin{proposition}
\label{pr:vstar}
For a \em fixed \em landmark path~$\overline{q}=\big\{\overline{q}^a:[0,1]\rightarrow 
\mathbb{R}^D\big\}_{a=1}^N\in\mathcal{Q}$
there exists a unique minimizer with respect to~$v\in L^2([0,1],V)$ of the energy~$E[v,\overline{q}]$, namely:
\begin{equation}
\label{vstar1}
v^\ast(t,x) := \sum_{a=1}^N p_a(t) \, K\big(x-\overline{q}^a(t)\big), \qquad t\in[0,1],  \; x\in\mathbb{R}^D,
\end{equation}
where the components of the momenta are given by:
\begin{equation}
\label{Expr_Mom}
p_{ai}(t) = \sum_{b=1}^N
\Big( \kermat\big(\overline{q}(t)\big) + \frac{\mathbb I_N}{\lambda} \Big)^{-1}_{ab}  \cdot 
	\frac{d}{dt}\overline{q}^{bi}(t), \qquad t\in[0,1], 
\end{equation}
$a=1,\ldots,N$, $i=1,\ldots,D$ (here~$\mathbb I_N$ indicates the~$N\times N$ identity matrix).
\end{proposition}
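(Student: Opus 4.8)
The plan is to exploit the fact that, with the path $\overline q$ held fixed, the energy $E[v,\overline q]$ in~\eqref{Evq} is an integral over $t$ whose integrand at each instant depends on $v$ only through $v(t,\cdot)\in V$ (both the norm term and, via the evaluations $v(t,\overline q^a(t))$, the penalty term). Since nothing couples different times, the minimization over $L^2([0,1],V)$ decouples pointwise in $t$: it suffices to minimize, for each fixed $t$, the functional
\begin{equation*}
F(u) := \|u\|_V^2 + \lambda\sum_{a=1}^N \big\| w^a - u(x^a) \big\|_{\mathbb R^D}^2, \qquad u\in V,
\end{equation*}
where I abbreviate $w^a := \tfrac{d}{dt}\overline q^a(t)$ and $x^a := \overline q^a(t)$. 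By admissibility condition (i) each evaluation map $u\mapsto u(x^a)$ is bounded $V\to\mathbb R^D$, so $F$ is a continuous, coercive, strictly convex quadratic on the Hilbert space $V$; the direct method then yields a unique minimizer $u^\ast$.

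First I would compute the first variation: for $h\in V$,
\begin{equation*}
\tfrac12\,DF(u)[h] = \langle u, h\rangle_V - \lambda\sum_{a=1}^N \big\langle w^a - u(x^a),\, h(x^a)\big\rangle_{\mathbb R^D}.
\end{equation*}
The key step is to convert the evaluation pairings into $V$-inner products via the reproducing kernel: since $\langle \alpha, h(x)\rangle_{\mathbb R^D} = \langle K_x^\alpha, h\rangle_V$ and in our setting $K_x^\alpha = K(\,\cdot\,-x)\,\alpha$, the variation equals $\big\langle u - \lambda\sum_a K(\,\cdot\,-x^a)\big(w^a - u(x^a)\big),\, h\big\rangle_V$. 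Requiring this to vanish for all $h$ forces
\begin{equation*}
u^\ast = \sum_{a=1}^N p_a\, K(\,\cdot\,-x^a), \qquad p_a := \lambda\big(w^a - u^\ast(x^a)\big),
\end{equation*}
which already exhibits the claimed form~\eqref{vstar1} of the minimizer.

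It then remains to identify the coefficients $p_a$. Evaluating the previous display at $x=x^b$ and using $K(x^b-x^a)=\kermat(\overline q)_{ba}$ gives $u^\ast(x^b)=\sum_a \kermat(\overline q)_{ba}\,p_a$; substituting back into $p_b=\lambda\big(w^b-u^\ast(x^b)\big)$ and rearranging yields, for each spatial component separately, the $N\times N$ linear system $\big(\kermat(\overline q)+\tfrac{1}{\lambda}\mathbb I_N\big)p = w$. Because $\kermat(\overline q)$ is positive definite and $\tfrac{1}{\lambda}\mathbb I_N$ is positive definite, the system matrix is invertible, and inverting it produces exactly~\eqref{Expr_Mom} component by component.

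The main obstacle is not the pointwise algebra but the justification that the pointwise minimizers assemble into an admissible competitor, i.e.~that $t\mapsto u^\ast(t,\cdot)$ genuinely lies in $L^2([0,1],V)$ so that pointwise minimization does minimize the integral. This follows because the entries of $\big(\kermat(\overline q(t))+\tfrac1\lambda \mathbb I_N\big)^{-1}$ depend continuously on $t$ (the kernel $K$ is continuous and $\overline q\in\mathcal Q$ is $C^1$) while $w^a(t)=\tfrac{d}{dt}\overline q^a(t)$ is continuous, so each $p_a(t)$ is continuous and $\|u^\ast(t,\cdot)\|_V$ is bounded on $[0,1]$, giving square-integrability. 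Finally, I would remark that the limiting case $\lambda=\infty$ is recovered by reading $\tfrac1\lambda\mathbb I_N=0$: there the penalty becomes the hard constraint $u(x^a)=w^a$, and the same stationarity condition emerges from a Lagrange-multiplier computation with the multipliers playing the role of the $p_a$.
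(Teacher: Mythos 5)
Your proof is correct, but it reaches \eqref{vstar1} and \eqref{Expr_Mom} by a different mechanism than the paper. The paper first invokes the orthogonal decomposition \eqref{decompositionV} (citing Lemma 9.5 of Younes) to conclude that the minimizer must already lie in the finite-dimensional span of the kernels $K(\cdot-\overline q^a)$; it then substitutes this ansatz into the energy and sets the \emph{finite-dimensional} first variation in the coefficients $\alpha_{ai}$ to zero. You instead run a representer-theorem argument entirely inside $V$: after decoupling the minimization pointwise in $t$, you get existence and uniqueness from strict convexity and coercivity of the quadratic functional, and then the Euler--Lagrange equation, rewritten via the reproducing property $\langle \alpha, h(x)\rangle_{\mathbb R^D}=\langle K_x^\alpha,h\rangle_V$, \emph{forces} the kernel form of the minimizer rather than assuming it; evaluating at the landmark points then gives the same linear system $\bigl(\kermat+\tfrac1\lambda\mathbb I_N\bigr)p=w$ per spatial component. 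What your route buys: it is self-contained (no appeal to the decomposition lemma), it makes existence and uniqueness explicit, and it supplies two points the paper glosses over --- that the pointwise-in-$t$ minimizers assemble into a genuine element of $L^2([0,1],V)$ (via continuity of $t\mapsto p_a(t)$), and that any global minimizer must agree with the pointwise one for a.e.\ $t$. What the paper's route buys: given the cited lemma, the whole computation is finite-dimensional from the start and correspondingly shorter. Your closing remark on $\lambda=\infty$ (hard constraint with the $p_a$ as Lagrange multipliers) is consistent with how the paper later specializes to exact matching.
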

\begin{remark}
What the above proposition essentially says is that the vector field of minimum energy that transports the~$N$ landmarks along fixed trajectories is, at any point of time, the linear combination of~$N$ lumps of velocity, each centered at a landmark point. The directions and amplitudes of the summands are determined precisely by the momenta.
\end{remark}
\begin{proof}[Proof of Proposition~\ref{pr:vstar}]
Using property (ii) of the admissible Hilbert space $V$, 
\cite[Lemma 9.5]{Younes10} shows that
for given $q=(q^1,\dots,q^N)\in \mathcal L^N(\mathbb R^D)$ we have 
the orthogonal decomposition
\begin{equation}\label{decompositionV}
V = \big\{v\in V: v(q^a)=0, a=1,\dots N\big\} 
\oplus \big\{v=\textstyle\sum_{a=1}^N \alpha_a K(\quad-q^a): \alpha_a\in \mathbb R^D\big\}. 
\end{equation}
Thus the minimizer must have the form
\begin{equation}
\label{v_generic}
v(t,x) = \sum_{a=1}^N \alpha_a(t) \, K\big(x-\overline{q}^a(t)\big), \qquad t\in[0,1],  \; x\in\mathbb{R}^D,
\end{equation}
for some coefficients
$\alpha_a\in C([0,1],\mathbb{R}^{D})$, $a=1,\ldots,N$,
to be computed. 
For velocities of the type~\eqref{v_generic} the
energy~\eqref{Evq} can be rewritten as
\begin{equation}
\label{Evq_aux}
E[v,\overline{q}] = \int_0^1 \sum_{i=1}^D\sum_{a,b=1}^N \Big\{ \alpha_{ai} K(\overline q^a-\overline q^b) \alpha_{bi} 
+ \lambda \,\big| \alpha_{ai}K(\overline q^a-\overline q^b)  - \dot{\overline q}^{bi} \big|^2 \Big\} \ dt.
\end{equation}
Setting the first variation of (\ref{Evq_aux}) with respect to 
coefficients $\alpha_{ai}$ to zero
yields the momenta~\eqref{Expr_Mom}. 
\end{proof}
It is convenient, at this point, to introduce the  $ND\times ND$, block-diagonal matrix
\begin{equation}
\label{def:met_t}
g(q):=
\begin{pmatrix}
\big(\kermat(q)+\frac{\idmatrix_N}{\lambda}\big)^{-1} & 0 & \cdots & 0 \\
0 & \big(\kermat(q)+\frac{\idmatrix_N}{\lambda}\big)^{-1} & \cdots & 0 \\
\vdots & \vdots & \ddots & \vdots \\
0 & 0 & \cdots & \big(\kermat(q)+\frac{\idmatrix_N}{\lambda}\big)^{-1}
\end{pmatrix},
\end{equation}
where the
$N\times N$ block~$\big(\kermat(q)+\frac{\mathbb{I}_N}{\lambda}
\big)^{-1}$ is repeated~$D$ times; the choice of symbol~$g$ is justified by the fact that~\eqref{def:met_t} is, as we shall see soon, precisely the Riemannian metric tensor with which we are endowing the manifold of landmarks,
written in coordinates.

Thus for a fixed path~$\overline{q}\in\mathcal{Q}$ the minimizer of~$E[v,\overline{q}]$ with respect to $v\in L^2([0,1],V)$ is given by~\eqref{vstar1};
since it depends on~$\overline{q}$ we will write it, with an abuse of notation, as~$\vastb$. We can define
\begin{equation}
\label{Etil}
\widetilde{E}[\overline q]  := E[ v^\ast(\overline q),\overline q ],
\end{equation}
which depends \em only \em on  the arbitrary path~$\overline q\in\mathcal{Q}$. The energy~\eqref{Etil} is ``equivalent'' to the energy~$E[v,q]$, in that:
\par\vspace{.1cm}
\noindent{\bf (a)} if $(\hat{v},\hat{q})$ minimizes~$E[v,q]$ then $\hat{q}$
minimizes~$\widetilde{E}[q]$, 
and~$E[\hat{v},\hat{q}]=\widetilde{E}[\hat{q}]$;
\par\vspace{.1cm}
\noindent{\bf (b)} if $\hat{q}$ minimizes~$\widetilde{E}[q]$
then $(\vasth,\hat{q})$
minimizes 
$E[v,q]$, and $E[\vasth,\hat{q}\,]=\widetilde{E}[\hat{q}\,]$.
\begin{proposition}
\label{th:Riem_E}
For an arbitrary landmark trajectory~$q\in \mathcal{Q}$
the energy~$\widetilde{E}[q]$ is given by:
\begin{equation}
\label{energy2}
\widetilde{E}[q] = \int_0^1 \dot{q}(t)^T g\big(q(t)\big) \, \dot{q}(t)\, dt
=
\int_0^1 \sum_{a,b=1}^N
\sum_{i=1}^D \dot{q}^{ai}(t)\,\dot{q}^{bi}(t)
\Big(\kermat\big(q\big(t))+\frac{\mathbb I_N}{\lambda}\Big)^{-1}_{ab}
\,dt
\end{equation}
\end{proposition}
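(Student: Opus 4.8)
The plan is to start from the definition $\widetilde{E}[q] = E[v^\ast(q),q]$ in~\eqref{Etil} and substitute the explicit minimizer~\eqref{vstar1}, together with the momenta~\eqref{Expr_Mom}, into the energy~\eqref{Evq}, simplifying the kinetic term and the smoothing term separately. Throughout I would suppress the time argument, abbreviate $M := \kermat(q)+\mathbb I_N/\lambda$, and write $\dot q^{(i)} := (\dot q^{1i},\ldots,\dot q^{Ni})^T$ and $p^{(i)} := (p_{1i},\ldots,p_{Ni})^T$ for the columns with fixed space coordinate $i$. With this notation the momentum relation~\eqref{Expr_Mom} reads $p^{(i)} = M^{-1}\dot q^{(i)}$, equivalently $\dot q^{(i)} = M\,p^{(i)}$, i.e.~$\dot q^{ci} = \sum_a (\kermat_{ca}+\delta_{ca}/\lambda)\,p_{ai}$; it is this second, equivalent form that will make the smoothing term collapse.

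For the kinetic term I would expand $\|v^\ast\|_V^2$ as a double sum over the $N$ kernel lumps and apply the reproducing-kernel identity $\langle K_x^\alpha, K_y^\beta\rangle_V = \langle\alpha, K(x-y)\beta\rangle_{\mathbb R^D}$ recalled in Section~\ref{se:prel}. Because we assume $K(x-y)$ is a scalar multiple of the identity, each term factors as $K(q^a-q^b)\sum_i p_{ai}p_{bi}$, giving $\|v^\ast\|_V^2 = \sum_{a,b=1}^N \kermat_{ab}\sum_{i=1}^D p_{ai}p_{bi}$. The scalar nature of $K$ is precisely what decouples the $D$ space coordinates and is responsible for the block-diagonal structure of $g$ in~\eqref{def:met_t}.

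The key step is the smoothing term. Evaluating the field at a landmark gives $v^\ast(q^c)_i = \sum_a \kermat_{ca}\,p_{ai}$, so I would compute the residual $\dot q^{ci} - v^\ast(q^c)_i$ by inserting the equivalent momentum relation $\dot q^{ci} = \sum_a (\kermat_{ca}+\delta_{ca}/\lambda)\,p_{ai}$: the $\kermat_{ca}p_{ai}$ contributions cancel and one is left with exactly $p_{ci}/\lambda$. Hence $\lambda\sum_{c}\|\dot q^c - v^\ast(q^c)\|_{\mathbb R^D}^2 = \tfrac1\lambda\sum_{c,i}p_{ci}^2$. Adding this to the kinetic term reconstitutes the full matrix $M$, so the integrand becomes $\sum_i\sum_{a,b}(\kermat_{ab}+\delta_{ab}/\lambda)\,p_{ai}p_{bi} = \sum_i (p^{(i)})^T M\,p^{(i)}$.

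Finally I would eliminate the momenta in favor of the velocities: substituting $p^{(i)} = M^{-1}\dot q^{(i)}$ yields $(p^{(i)})^T M\,p^{(i)} = (\dot q^{(i)})^T M^{-1}\dot q^{(i)}$, and summing over $i$ produces $\sum_{a,b,i}\dot q^{ai}\dot q^{bi}(\kermat+\mathbb I_N/\lambda)^{-1}_{ab}$, which is the claimed formula and, read back in block-matrix form, equals $\dot q^T g(q)\,\dot q$. I do not expect a genuine analytic obstacle, since Proposition~\ref{pr:vstar} already supplies the explicit minimizer; the only points needing care are the correct use of the reproducing-kernel identity (and the scalar-$K$ assumption that separates the $D$ coordinates) and the bookkeeping between the $N\times N$ matrices $M$, $M^{-1}$ and the $ND\times ND$ block-diagonal metric $g$.
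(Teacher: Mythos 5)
Your proposal is correct and follows essentially the same route as the paper: the paper's (very terse) proof likewise inserts the momentum formulae~\eqref{Expr_Mom} into the energy evaluated at the minimizing field (its intermediate expression~\eqref{Evq_aux}) and reduces via matrix manipulations to~\eqref{energy2}. Your computation --- the reproducing-kernel expansion of the kinetic term, the observation that the residual collapses to $p_{ci}/\lambda$ so that the two terms reassemble $\kermat+\mathbb I_N/\lambda$, and the final substitution $p^{(i)}=(\kermat+\mathbb I_N/\lambda)^{-1}\dot q^{(i)}$ --- is exactly the detail the paper compresses into ``simple matrix manipulations.''
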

In the above equation~$\dot{q}(t)$ is intended as an $ND$-dimensional column vector
obtained by stacking the column vectors $(\dot{q}^{1i}(t),\ldots,
\dot{q}^{Ni}(t))^T$, $i=1,\ldots, D$ 
(again, the superscript~$^T$ indicates the transpose of a vector).
\begin{proof}
Following definition \eqref{Etil}, formulae~\eqref{Expr_Mom}
for the momenta are inserted into the modified
expression~\eqref{Evq_aux} for energy~$E[v,q]$. 
Simple matrix manipulations finally yield the right-hand side
of~\eqref{energy2}.
\end{proof}
\begin{remarks}
Expression~\eqref{energy2} has exactly the form of the energy of a path~$q$
with respect to Riemannian metric tensor~\eqref{def:met_t}. Whence given two
landmark configurations~$I$ and~$I'$
in~$\mathcal{L}^N(\mathbb{R}^D)$ we have that if~$\hat{q}$ minimizes~\eqref{energy2}
among all paths in~$q\in\mathcal{Q}$ such that~$q(0)=I$
and~$q(1)=I'$ then~$(\widetilde{E}[\hat{q}])^{1/2}$ is the \em geodesic distance \em 
between~$I$ and~$I'$. By point~{\bf(b)} above we also have that $(\vasth,\hat{q})$
is a minimum of energy~$E[v,q]$, so~$d(I,I')$ defined in~\eqref{lan_dist}
coincides with~$(\widetilde{E}[\hat{q}])^{1/2}$ and \em is \em
the geodesic distance between~$I$ and~$I'$ with respect to the metric tensor~$g$.
\par
The Lagrangian function that 
corresponds to the energy~\eqref{energy2}
is:
\begin{equation}
\label{eq:lagr}
\mathcal{L}(q,\dot{q})=\frac{1}{2}\,\dot{q}^Tg(q)\dot{q}
=
\frac{1}{2}
\sum_{a,b=1}^N
\sum_{i=1}^D
\dot{q}^{ai}\dot{q}^{bi} \Big(\kermat(q)+\frac{\mathbb I_N}{\lambda}\Big)^{-1}_{ab}.
\end{equation}
In Hamiltonian mechanics~\cite[p.~60]{arnold:1} 
the ``momenta'' are defined as
$
p_{ai}
=
\partial\mathcal{L}/\partial \dot{q}^{ai}
$,
or, in vector notation, 
$p_{(i)}=\partial\mathcal{L}/\partial \dot{q}^{(i)}$
(for $i=1,\ldots,D
$).
Applying such definition to~\eqref{eq:lagr}
yields
precisely
equations~\eqref{Expr_Mom}
of Proposition~\ref{pr:vstar}. Whence the
use of the term momenta is justified.
\par
Note that for small values of the parameter~$\lambda$
the metric tensor~$g$, written in coordinates, gets close
(up to a multiplicative constant) to the~$ND\times ND$ 
identity matrix; in other words, for~$\lambda\rightarrow 0$,
$g$ converges to a Euclidean metric and the geodesic curves become
straight lines. On the other hand, for~$\lambda\rightarrow\infty$
(exact matching) the metric converges to 
$
[\mathrm{diag}\{\kermat(q),\ldots,\kermat(q)\}]^{-1}$ (block~$\kermat(q)$ is repeated~$D$ times).
In general, the block-diagonal form of the metric tensor~$g$ given by \eqref{def:met_t}
follows from the fact that the operator~$L$ in~\eqref{eq:SoNorm}
is  applied separately to each of the components of the velocity field;
however the dynamics of the~$D$ dimensions of~$q$ are \em not \em decoupled since
\em all \em $N\!D$ components of~$q$ appear in each diagonal block of~$g$. 

In the case of exact matching  
landmarks ``never collide'' (their trajectories are precisely defined by
diffeomorphisms of~$\mathbb{R}^D$): it takes
an infinite amount of energy to make any two landmarks coincide.
So  under the condition~$\lambda=\infty$  
the manifold of landmarks  can actually be taken as the set: 
\begin{equation}
\label{lambdainf}
\mathcal{L}^N(\mathbb{R}^D) = \Big\{(P^1,\ldots,P^N)\,\big| \, 
P^a\in \mathbb{R}^D, P^a\not =P^ b \mbox{ if }a\not=b \Big\}.
\end{equation}
\par
Figure~\ref{2ptgeod} shows the qualitative behavior of
geodesics in~$\mathcal{L}^2(\mathbb{R}^2)$, with $\lambda=\infty$. 
In the case illustrated
on the left-hand side both landmarks travel in the same direction
(from left to right, as indicated by the arrows):
the two arcs of the
geodesic ``attract'' each other, or in other words 
the two landmarks tend to 
``carpool'' by using a velocity field with the smallest
possible support so to minimize the~$L^2$ part
(i.e.~the first term)
of the Sobolev norm~\eqref{eq:SoNorm} of the velocity field.
 On the other hand when the two landmarks travel in \em opposite \em
directions (as illustrated on the right-hand side of Figure~\ref{2ptgeod}) 
they try to avoid each other so that the higher order terms
of the Sobolev norm are kept small; 
we shall return on the issue of obstacle avoidance at the end of this paper.
A typical
geodesic in $\mathcal{L}^4(\mathbb{R}^2)$
(again with~$\lambda=\infty$) is shown in Figure~\ref{4ptgeod}.
\end{remarks}
\begin{figure}[t]
\center{
\includegraphics[height=7.2cm]{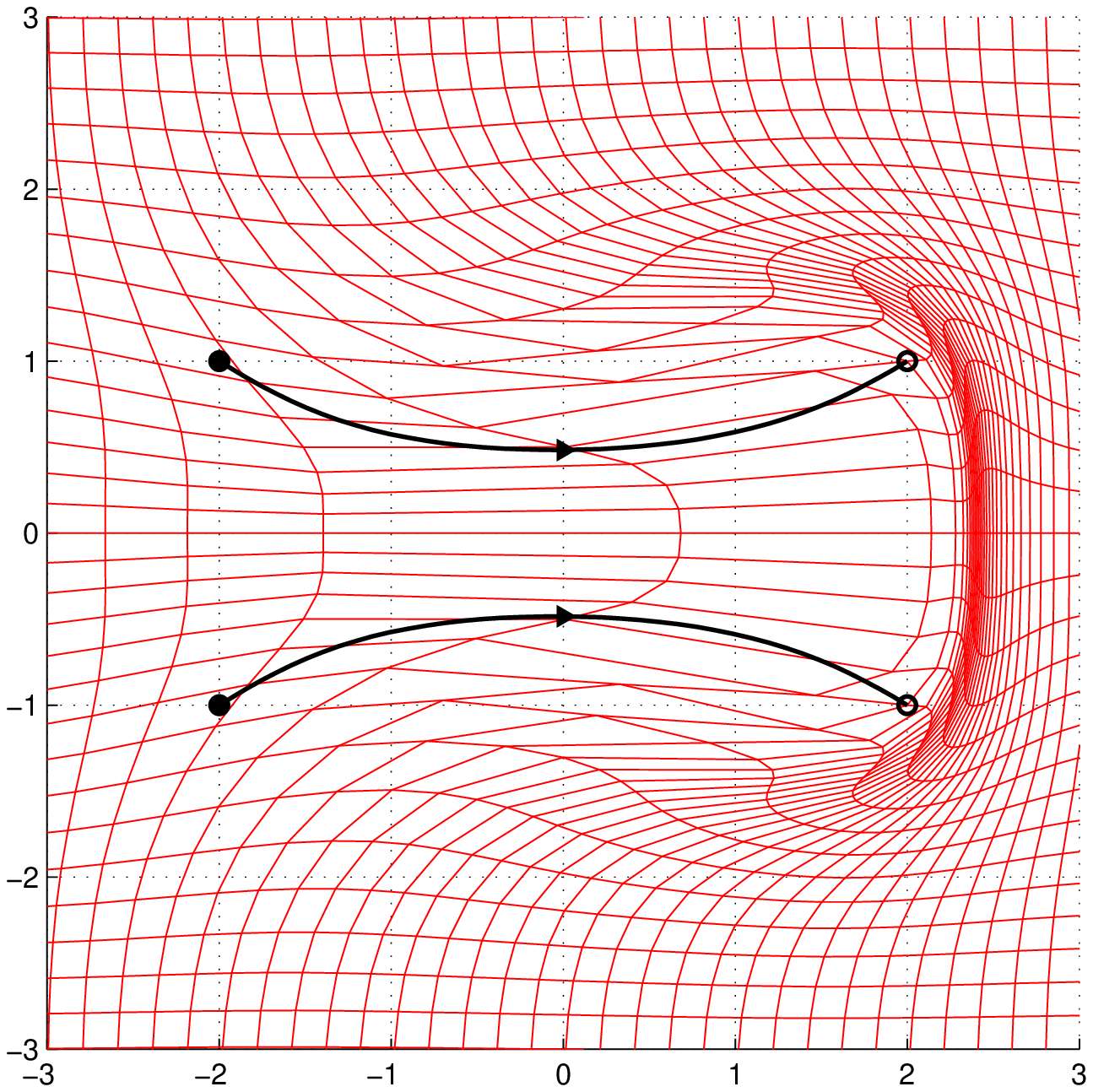}
\includegraphics[height=7.2cm]{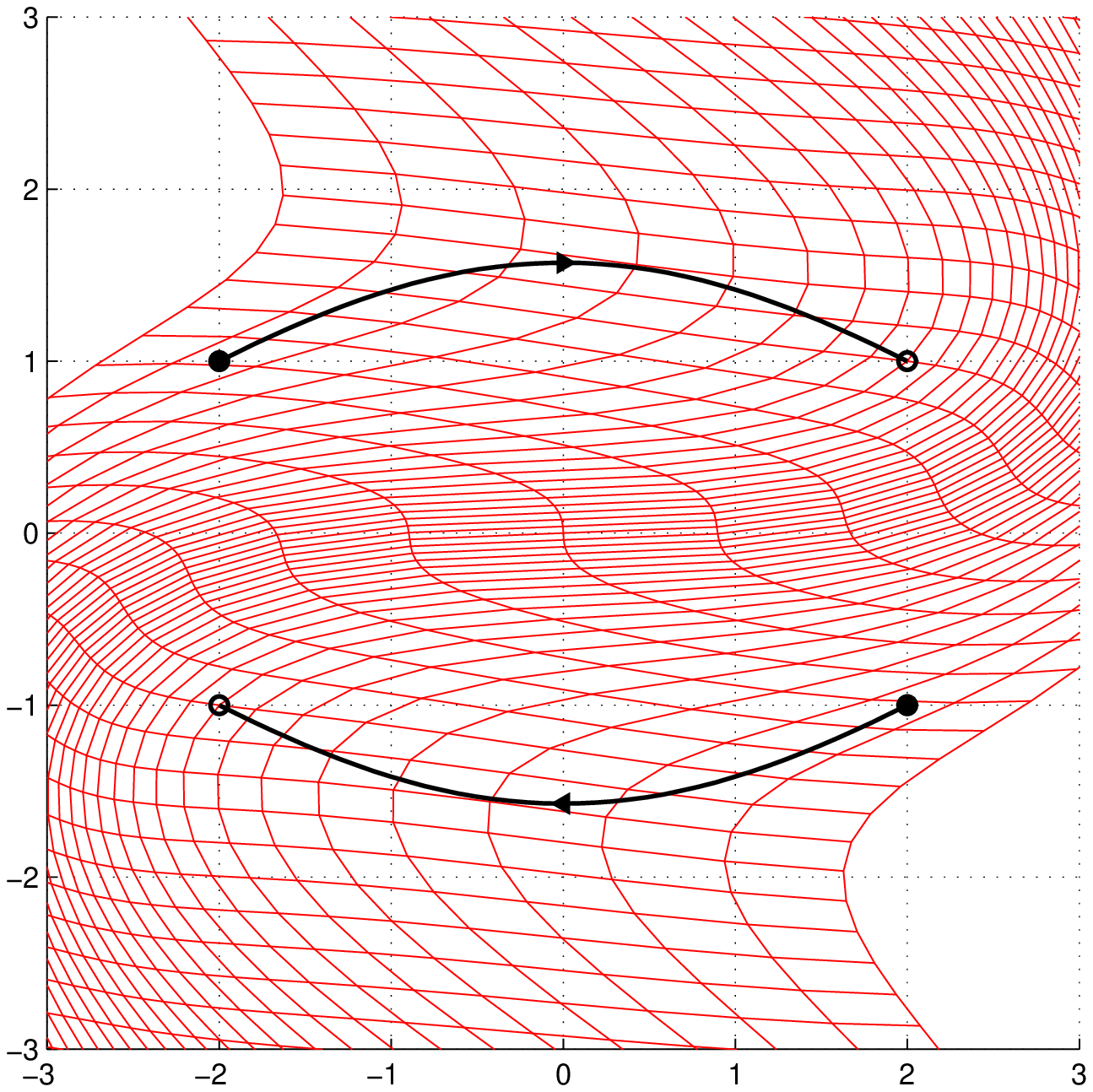}
}
\caption{
Two trajectories in~$\mathcal{L}^2(\mathbb{R}^2)$. 
Bullets~$(\bullet)$ and circles~$(\circ)$
are the initial and final sets of landmarks, respectively. 
The grids represents the two corresponding
diffeomorphisms~$\varphi_{01}^v$.}
\label{2ptgeod}
\end{figure} 
\begin{figure}[t]
\center{
\includegraphics[height=7.2cm]{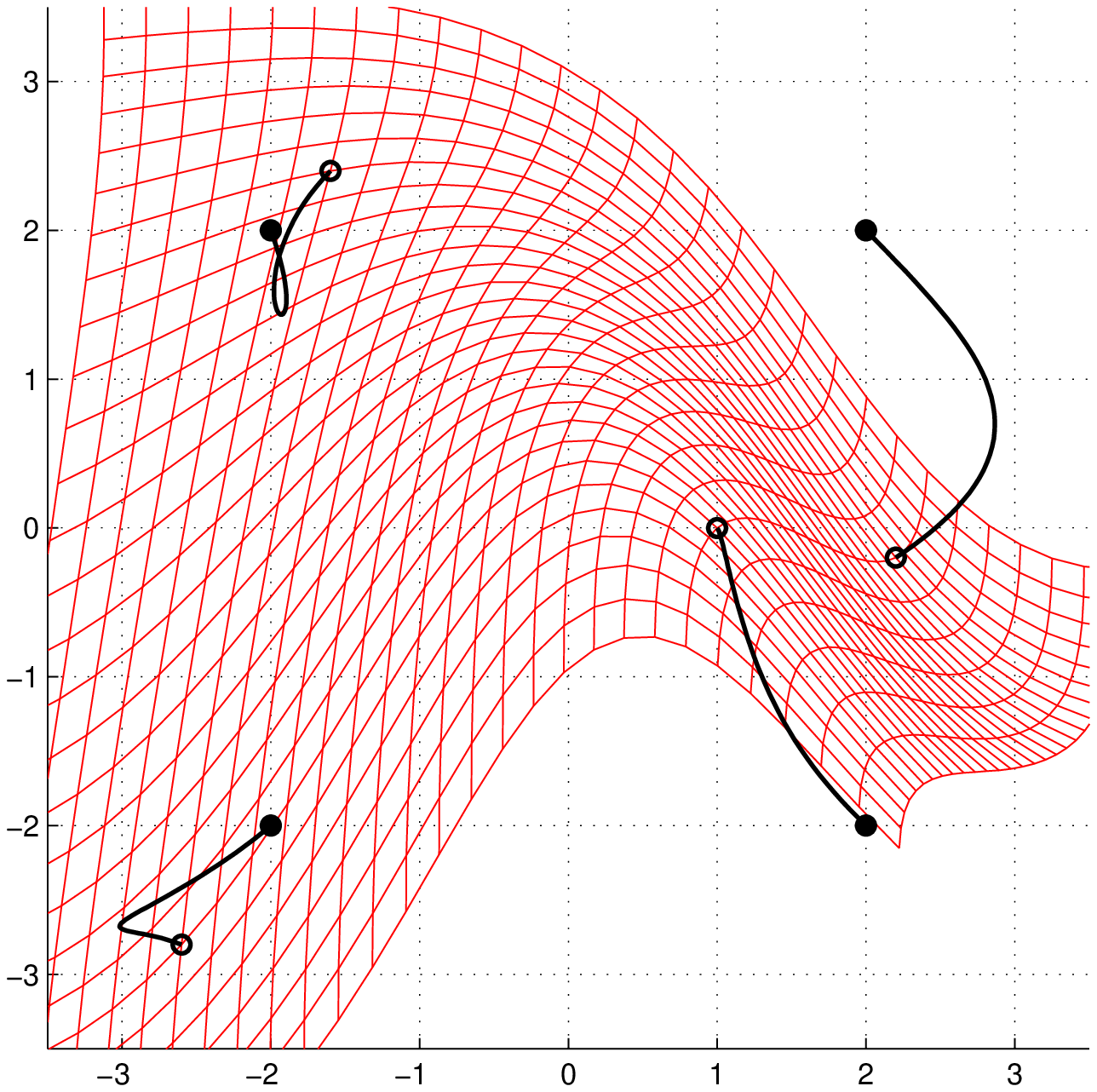}
}
\caption{A typical geodesic trajectory in $\mathcal{L}^4(\mathbb{R}^2)$. 
Bullets~$(\bullet)$ and circles~$(\circ)$
are the initial and final sets of landmarks, respectively. 
The grid represents the corresponding
diffeomorphism~$\varphi_{01}^v$.}
\label{4ptgeod}
\end{figure} 
\begin{conclusion}
We have shown that distance~$d(I,I')$, $I,I'\in \mathcal{L}^N(\mathbb{R}^D)$ defined in~\eqref{lan_dist}
is in fact the geodesic distance with respect to a
Riemannian metric. In coordinates, the corresponding 
Riemannian metric tensor is given by~\eqref{def:met_t}, which is such
that each element of its \em inverse \em (the cometric) 
depends on at most~$2D$ of the~$N\!D$ coordinates. Whence the first
and second partial derivatives of the cometric have a very sparse
structure. This gives us motivation for deriving a general formula
for computing sectional curvature in terms of the cometric and its derivatives \em
in lieu \em of the metric and its derivatives, which will be done in the next section.
\end{conclusion}

\section{Sectional Curvature in terms of the Cometric}
\label{sec:MaFo}
\subsection{Generalities and notation on sectional curvature}
\label{dual_R}
Let~$\mathcal{M}$ be an $n$-dimensional Riemannian manifold. If we consider a local chart~$(U,\varphi)$ on the manifold with coordinates $(x^1,\ldots,x^n)$, we have the induced 1-forms $dx^1,\dots,dx^n$ and coordinate vector fields 
$\{\partial_1:=\frac{\partial}{\partial x^1},\ldots,\partial_n=\frac{\partial}{\partial x^n}\}$.
The metric tensor
$g:T\mathcal{M}\times_{\mathcal M} T\mathcal{M}\rightarrow \mathbb{R}$
can be represented as 
$g|_U=g(\partial_i,\partial_j)\,dx^i\otimes dx^j =: g_{ij}\,dx^i\otimes dx^j$
(here, as in the rest of the current section, we are using Einstein's summation 
convention).
For each $p\in \mathcal{M}$ 
we get a positive definite matrix with elements~$g_{ij}(p)=g_p(\partial_i,\partial_j)$.
With an abuse of notation we will write
$g_{ij}(x)$ instead of~$(g_{ij}\circ\varphi^{-1})(x)$, $x\in\varphi(U)\subset \mathbb{R}^n$.
\begin{notation}
We shall denote the partial derivatives of
the elements of the metric tensor~$g$ as
$g_{ij,k}(x):=\,\frac{\partial}{\partial x^k}g_{ij}(x)= \partial_{k}g_{ij}$ 
and 
$g_{ij,k\ell}(x):=\frac{\partial^2}{\partial x^\ell\partial x^k}\,g_{ij}(x) = 
\partial_\ell\partial_k g_{ij}$.
Also, we will indicate the \em cometric \em as~$g^{-1}|_U = g^{ij}\pari\otimes\parj$
(so that~$g^{ij}g_{jk}=\delta^i_k$)
and their partial derivatives with
$\gul{ij}{k}\!\!(x):=\,\frac{\partial}{\partial x^k}g^{ij}(x)$ 
and 
$\gul{ij}{k\ell}\!\!(x):=\frac{\partial^2}{\partial x^\ell\partial x^k}\,g^{ij}(x)$.

For a tangent vectors $X=X^i\pari$ we consider the 1-form $X^\flat := X^i g_{ij}dx^j =: X_jdx^j$ (indices 
lowered), and for a 1-form  $\alpha = \alpha_idx^i$ we have the tangent vector $\alpha^\sharp := 
\alpha_i g^{ij}\parj$ (indices lifted).

Indicating with~$\mathcal{X}(\mathcal{M})$
the space of smooth vector fields on the manifold~$\mathcal{M}$,
let $\nabla:\mathcal{X}(\mathcal{M})
\times\mathcal{X}(\mathcal{M})
\rightarrow\mathcal{X}(\mathcal{M})$ 
be the Levi-Civita connection~\cite{jost,lee:2} of the Riemannian manifold. The Christoffel symbols
$\Gamma_{ij}^k$ are defined by
$\nabla_{\partial_i}\partial_j=\Gamma_{ij}^k\partial_k$, 
and it is well known that they have the form:
$ \Gamma_{ij}^k=\frac{1}{2}g^{k\ell}
(g_{i\ell,j}+g_{j\ell,i}-g_{ij,\ell})$.
The \em Riemannian curvature endomorphism \em is the map
$R:\mathcal{X}(\mathcal{M})\times
\mathcal{X}(\mathcal{M})\times
\mathcal{X}(\mathcal{M})
\rightarrow \mathcal{X}(\mathcal{M})$
given  by
$ R(X,Y)Z=\nabla_X\nabla_YZ-\nabla_Y\nabla_XZ-\nabla_{[X,Y]}Z.$
In local coordinates
$R(\partial_i,\partial_j)\partial_k = R_{ijk}^\ell \partial_\ell$,
and
$R_{ijkm}:= 
\langle R(\partial_i,\partial_j)\partial_k, \partial_m \rangle_g 
=g_{m\ell}R_{ijk}^\ell.$
The \em Riemannian curvature tensor \em
acts on vector fields as follows:
\begin{equation}
\label{Raction}
R(X,Y,Z,W) := \langle R(X,Y)Z,W \rangle_g
\end{equation}
and in coordinates it is written as
$
R=R_{ijkm} dx^i \otimes dx^j \otimes dx^k \otimes dx^m
$.
The Riemannian curvature tensor has a number of symmetries:
(i) $R_{ijk\ell}  = -R_{jik\ell}$;
(ii) $R_{ijk\ell}  = -R_{ij\ell k}$;
(iii)~$R_{ijk\ell}  = R_{k\ell ij}$; 
and~(iv)~$R_{ijk\ell}+R_{jki\ell}+R_{kij\ell}= 0$ 
(first Bianchi identity).
With such conventions, the \em sectional curvature \em associated to a pair of non-parallel tangent vectors~$X$ and~$Y$
is computed by:
\begin{equation}
\label{seccurv}
\mathcal{K}(X,Y) \;\,= \;\, \frac{R(X,Y,Y,X)}{\|X\|_g^2\|Y\|_g^2-\langle X,Y\rangle_g^2} \;\,=\;\, 
\frac{R_{ijkm}X^iY^jY^kX^m}{\|X\|_g^2\|Y\|_g^2-\langle X,Y\rangle_g^2}\,.
\end{equation}
\par
In order to express the numerator of sectional curvature~\eqref{seccurv} in terms of the elements of the cometric and its derivatives (i.e.~$g^{ij}$, $\gul{ij}{k}\!$, and 
$\gul{ij}{k\ell}\!\!$) 
we consider the covariant expression of the Riemannian
curvature tensor: 
\begin{equation}
\label{def_drct}
R^{ursv}:= R_{ijkm}\,g^{iu}g^{jr}g^{ks}g^{mv},
\end{equation}
which we call the {\it dual Riemannian curvature tensor\/}.
Similarly we consider the covariant or \em dual Christoffel symbols \em
\begin{equation}
\label{lemma_dcs}
\Gamma_{u}^{rs}:=g^{ir}g^{js}g_{ku}\Gamma^k_{ij}\,,
\end{equation}
which are symmetric in the indices~$r$ and~$s$.
\par
To achieve notational compactness we will use the following symbols:
\begin{equation}
\label{u_deriv_2}
g^{ij,k}:=\gul{ij}{\xi}g^{\xi k}
\;\;\mbox{ and }\;\;
g^{ij,k\ell}:= \gul{ij}{\xi\eta}g^{\xi k}g^{\eta \ell};
\end{equation}
Using that $g=Q^{-1}$ implies
$\partial_k g= -Q^{-1}\cdot\,\partial_k Q\cdot Q^{-1}$ one immediately sees that
$$
\Gamma_{u}^{rs} = 
	- \frac{1}{2}\, g_{u\varphi} \big( g^{s\varphi,r} + g^{r\varphi,s} - g^{rs,\varphi} \big).
$$
\end{notation}
\begin{proposition} 
\label{pr:1st_te}
The following expression holds for the Riemannian curvature tensor:
\begin{equation}
\label{TensExpr}
2R_{ijkm}  =  g_{ik,jm} + g_{jm,ik} - g_{jk,im} - g_{im,jk} 
	+ 2 \Gamma^\alpha_{ik} \Gamma^\beta_{jm} g_{\alpha\beta} 
	- 2 \Gamma^\alpha_{jk} \Gamma^\beta_{im} g_{\alpha\beta}.
\end{equation}
\end{proposition}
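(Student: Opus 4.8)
The plan is to start from the coordinate formula for the curvature endomorphism, lower an index, and then massage the result into the stated symmetric form. Since the coordinate vector fields commute, $[\partial_i,\partial_j]=0$, the definition $R(\partial_i,\partial_j)\partial_k=\nabla_{\partial_i}\nabla_{\partial_j}\partial_k-\nabla_{\partial_j}\nabla_{\partial_i}\partial_k$ together with $\nabla_{\partial_j}\partial_k=\Gamma^p_{jk}\partial_p$ yields the standard expression $R^\ell_{ijk}=\partial_i\Gamma^\ell_{jk}-\partial_j\Gamma^\ell_{ik}+\Gamma^\ell_{ip}\Gamma^p_{jk}-\Gamma^\ell_{jp}\Gamma^p_{ik}$. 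Lowering the index with the metric gives $R_{ijkm}=g_{m\ell}R^\ell_{ijk}$, which splits into a part involving derivatives of Christoffel symbols and a purely quadratic part.

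To extract second derivatives of the metric I would introduce the Christoffel symbols of the first kind $\Gamma_{m,jk}:=g_{m\ell}\Gamma^\ell_{jk}=\tfrac12(g_{jm,k}+g_{km,j}-g_{jk,m})$ and write $g_{m\ell}\,\partial_i\Gamma^\ell_{jk}=\partial_i\Gamma_{m,jk}-(\partial_i g_{m\ell})\Gamma^\ell_{jk}$. Differentiating the explicit expression for $\Gamma_{m,jk}$ and forming the antisymmetric combination $\partial_i\Gamma_{m,jk}-\partial_j\Gamma_{m,ik}$, the two $g_{km}$-type second-derivative terms cancel because partial derivatives commute, and (using symmetry of the last two derivative indices) what survives is exactly $\tfrac12(g_{ik,jm}+g_{jm,ik}-g_{jk,im}-g_{im,jk})$, that is, half of the first four terms of the claim.

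The remaining work is to show that the leftover pieces—the two $(\partial g)\,\Gamma$ terms together with the two quadratic $g\,\Gamma\,\Gamma$ terms from $R^\ell_{ijk}$—collapse into $\Gamma^\alpha_{ik}\Gamma^\beta_{jm}g_{\alpha\beta}-\Gamma^\alpha_{jk}\Gamma^\beta_{im}g_{\alpha\beta}$. The key tool is metric compatibility in the form $\partial_i g_{m\ell}=g_{mq}\Gamma^q_{i\ell}+g_{\ell q}\Gamma^q_{im}$, which follows directly from the Christoffel formula. Substituting this for each $\partial g$ factor turns those terms into quadratic Christoffel expressions; after relabeling dummy indices, one resulting pair cancels against $g_{m\ell}\Gamma^\ell_{ip}\Gamma^p_{jk}$ and the other against $g_{m\ell}\Gamma^\ell_{jp}\Gamma^p_{ik}$, leaving precisely the desired $g_{\alpha\beta}\Gamma\Gamma$ difference. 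Multiplying through by $2$ then yields \eqref{TensExpr}.

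I expect the main obstacle to be purely organizational: keeping track of the numerous quadratic Christoffel terms and their dummy-index relabelings so that the cancellations become visible. The conceptual crux is recognizing that metric compatibility is what converts the $\partial g$ factors into Christoffel symbols, since without this rewriting the naive expansion does not manifestly reduce to the symmetric $(ik),(jm)$-paired form appearing on the right-hand side.
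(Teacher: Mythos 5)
Your proposal is correct: starting from $R^\ell_{ijk}=\partial_i\Gamma^\ell_{jk}-\partial_j\Gamma^\ell_{ik}+\Gamma^\ell_{ip}\Gamma^p_{jk}-\Gamma^\ell_{jp}\Gamma^p_{ik}$, lowering the index via the first-kind symbols $\Gamma_{m,jk}=\tfrac12(g_{jm,k}+g_{km,j}-g_{jk,m})$, and converting the $(\partial g)\,\Gamma$ terms with metric compatibility $\partial_i g_{m\ell}=g_{mq}\Gamma^q_{i\ell}+g_{\ell q}\Gamma^q_{im}$ does produce exactly the claimed cancellations (two of the four quadratic terms cancel against $g_{m\ell}\Gamma^\ell_{ip}\Gamma^p_{jk}$ and $-g_{m\ell}\Gamma^\ell_{jp}\Gamma^p_{ik}$, and the survivors are $\Gamma^\alpha_{ik}\Gamma^\beta_{jm}g_{\alpha\beta}-\Gamma^\alpha_{jk}\Gamma^\beta_{im}g_{\alpha\beta}$), and the resulting expression also passes all the symmetry checks of the curvature tensor. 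Note that the paper does not give its own proof of this proposition — it only cites \S 24.9 of Michor's differential geometry book — so your argument supplies the standard derivation that the citation stands in for, and in essentially the same spirit.
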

\noindent{For} a proof see~\cite[\S 24.9]{michor:dgbook}.

\subsection{Mario's formula}
\begin{proposition}
\label{prop_drct}
The following expression holds for 
the dual Riemannian curvature tensor:
\begin{align}
\nonumber
2R^{ursv} = &
-g^{us,rv} -g^{rv,us} +g^{rs,uv} +g^{uv,rs} 
+2\Gamma_\rho^{rv}\Gamma_\sigma^{us}g^{\rho\sigma}
-2\Gamma_\rho^{rs}\Gamma_\sigma^{uv}g^{\rho\sigma}
\\
\label{drct}
& 
+\;\, g^{r\lambda,u} g_{\lambda\mu}\, g^{\mu v,s} -
g^{r\lambda,u} g_{\lambda\mu}\, g^{\mu s,v} +
g^{u\lambda,r} g_{\lambda\mu}\, g^{\mu s,v} -
g^{u\lambda,r} g_{\lambda\mu}\, g^{\mu v,s}
\\
& 
+\;\, g^{r\lambda,s} g_{\lambda\mu}\, g^{\mu v,u} +
g^{u\lambda,v} g_{\lambda\mu}\, g^{\mu s,r} -
g^{r\lambda,v} g_{\lambda\mu}\, g^{\mu s,u} -
g^{u\lambda,s} g_{\lambda\mu}\, g^{\mu v,r}. 
\nonumber
\end{align}
\end{proposition}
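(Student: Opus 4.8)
The plan is to take the closed expression for $2R_{ijkm}$ from Proposition~\ref{pr:1st_te} and contract it directly with the four cometric factors $g^{iu}g^{jr}g^{ks}g^{mv}$ that define $R^{ursv}$ in~\eqref{def_drct}, converting every object into cometric data along the way. The right-hand side of~\eqref{TensExpr} splits naturally into two groups: the four second-derivative-of-the-metric terms $g_{ik,jm}+g_{jm,ik}-g_{jk,im}-g_{im,jk}$, and the two products of Christoffel symbols. I would treat the groups separately, since each is responsible for a recognizable block of~\eqref{drct}.

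The Christoffel group is the easy one and I would dispose of it first. Grouping $2\Gamma^\alpha_{ik}\Gamma^\beta_{jm}g_{\alpha\beta}\,g^{iu}g^{jr}g^{ks}g^{mv}$ as $2\,[\Gamma^\alpha_{ik}g^{iu}g^{ks}]\,g_{\alpha\beta}\,[\Gamma^\beta_{jm}g^{jr}g^{mv}]$, the definition~\eqref{lemma_dcs} of the dual Christoffel symbols yields the raising identity $\Gamma^\alpha_{ik}g^{iu}g^{ks}=g^{\alpha\rho}\Gamma^{us}_\rho$ (and likewise for the second factor). Substituting and collapsing $g^{\alpha\rho}g_{\alpha\beta}g^{\beta\sigma}=g^{\rho\sigma}$ turns this into $2\Gamma^{us}_\rho g^{\rho\sigma}\Gamma^{rv}_\sigma$, i.e.\ exactly $+2\Gamma^{rv}_\rho\Gamma^{us}_\sigma g^{\rho\sigma}$ by the symmetry of $g^{\rho\sigma}$; the identical manipulation sends $-2\Gamma^\alpha_{jk}\Gamma^\beta_{im}g_{\alpha\beta}$ to $-2\Gamma^{rs}_\rho\Gamma^{uv}_\sigma g^{\rho\sigma}$. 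These reproduce the two $\Gamma$-terms on the first line of~\eqref{drct} with nothing left over.

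For the second-derivative group the key tool is the identity $\partial_k g=-g\cdot(\partial_k g^{-1})\cdot g$ quoted in the excerpt, which in indices reads $g_{ij,k}=-g_{i\alpha}\gul{\alpha\beta}{k}g_{\beta j}$. Differentiating once more and re-substituting this same identity for the two metric factors that get hit, each $g_{\cdots,\cdots}$ becomes $-g_{\bullet\alpha}\gul{\alpha\beta}{\bullet\bullet}g_{\beta\bullet}$ plus two \emph{product} terms of the shape $g_{\bullet\gamma}\gul{\gamma\delta}{\bullet}g_{\delta\alpha}\gul{\alpha\beta}{\bullet}g_{\beta\bullet}$. Contracting against $g^{iu}g^{jr}g^{ks}g^{mv}$ then does two things at once: the isolated $D\times D$ metric factors annihilate the cometric factors ($g_{i\alpha}g^{iu}=\delta^u_\alpha$, and so on), so the clean second-derivative piece collapses to a single fully-raised term $g^{\cdot\cdot,\cdot\cdot}$ in the notation of~\eqref{u_deriv_2}, while each product collapses to a sandwich $g^{\cdot\lambda,\cdot}g_{\lambda\mu}g^{\mu\cdot,\cdot}$. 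Carrying this out for all four terms $+g_{ik,jm},\,+g_{jm,ik},\,-g_{jk,im},\,-g_{im,jk}$ produces the four clean terms $-g^{us,rv}-g^{rv,us}+g^{rs,uv}+g^{uv,rs}$ together with the eight product terms on the last two lines of~\eqref{drct}.

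The genuine work—and the place where slips are most likely—is the bookkeeping of those eight product terms: tracking which derivative index is raised to which of $u,r,s,v$, where the single surviving metric factor $g_{\lambda\mu}$ sits, and the overall sign (the two ``$+$'' terms of~\eqref{TensExpr} produce products with a plus and the two ``$-$'' terms produce products with a minus, because the leading minus in $g_{ij,k}=-g_{i\alpha}\gul{\alpha\beta}{k}g_{\beta j}$ combines with the further minus generated when the second derivative lands on one of the two metric factors). After a careful relabeling of the dummy indices $\lambda,\mu$, one verifies that these eight sandwiches coincide exactly with those in~\eqref{drct}, with no cancellation and nothing left over, which completes the proof.
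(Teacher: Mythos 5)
Your proposal is correct and is essentially the paper's own proof read in the opposite direction: the paper factors $g_{iu}g_{jr}g_{ks}g_{mv}$ out of each term of \eqref{TensExpr}, while you contract \eqref{TensExpr} with $g^{iu}g^{jr}g^{ks}g^{mv}$, but both rest on exactly the same two ingredients --- the raising identity derived from \eqref{lemma_dcs} for the Christoffel block, and the matrix identity $\partial g = -g\,(\partial Q)\,g$ (differentiated once more) for the second-derivative block --- and produce the same eight sandwich terms with the same sign bookkeeping.
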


\begin{proof}
We will manipulate~\eqref{TensExpr} and write it in the form 
$R_{ijkm} = g_{iu} g_{jr} g_{ks} g_{mv} R^{ursv}$
by factoring~$g_{iu} g_{jr} g_{ks} g_{mv} $ 
out of each term; what will be left will be precisely the expression for~$R^{ursv}$.
\par
The terms in~\eqref{TensExpr} involving Christoffel symbols are,
by (\ref{lemma_dcs}):
\begin{align}
\label{aux_A}
\Gamma^\alpha_{ik} \Gamma^\beta_{jm} g_{\alpha\beta} =
g_{iu}g_{ks}g^{\alpha\sigma} \, \Gamma^{us}_{\sigma} \,
g_{jr}g_{mv}g^{\rho\beta} \, \Gamma^{rv}_{\rho} \, g_{\alpha\beta}
&=
g_{iu}g_{jr} g_{ks}g_{mv} 
\big( \Gamma^{rv}_{\rho} \Gamma^{us}_{\sigma} g^{\rho\sigma} \big),
\\
\mbox{and similarly: }
\;\;\;\;
\;\;\;\;
\;\;\;\;
\;\;\;\;
\Gamma^\alpha_{jk} \Gamma^\beta_{im} g_{\alpha\beta}
&=
g_{iu}g_{jr} g_{ks}g_{mv} 
\big(\Gamma^{rs}_{\rho} \Gamma^{uv}_{\sigma} g^{\rho\sigma} \big).
\end{align}
As we noted before, if~$g=\invg^{-1}$
then~$\partial_j g= -Q^{-1}\cdot\,\partial_j Q\cdot Q^{-1}$
and similarly it is the case that
$ \partial_m \partial_j g =
\invg^{-1} \cdot \big( \partial_m \invg \cdot \invg^{-1} \cdot \partial_j \invg +
\partial_j \invg \cdot \invg^{-1} \cdot \partial_m \invg -\partial_m\partial_j \invg \big)
\cdot \invg^{-1}, $
i.e., in index notation,
\begin{align}
\nonumber
g_{ik,jm} & = g_{iu} \big( \gul{u\lambda}{m} g_{\lambda\mu} \, \gul{\mu s}{j} + \gul{u\lambda}{j} g_{\lambda\mu}
\, \gul{\mu s}{m} -
\gul{us}{jm}\!\! \big)
g_{sk}\,
\\
\nonumber
&= g_{iu} g_{ks} \delta^\xi_j \delta^\eta_m \big( \gul{u\lambda}{\eta} g_{\lambda\mu} \,
\gul{\mu s}{\xi} + \gul{u\lambda}{\xi} g_{\lambda\mu} \, \gul{\mu s}{\eta} - \gul{us}{\xi\eta}\!\! \big)
\\
& \nonumber =
g_{iu} g_{ks} g_{jr} g_{mv} \big[ g^{r\xi} g^{v\eta} \big( \gul{u\lambda}{\eta} g_{\lambda\mu} \,
\gul{\mu s}{\xi} + \gul{u\lambda}{\xi} g_{\lambda\mu} \, \gul{\mu s}{\eta} - \gul{us}{\xi\eta}\!\! \big)
\big]
\\ 
&= g_{iu} g_{jr} g_{ks} g_{mv} \big(
g^{u\lambda,v} g_{\lambda\mu} \, g^{\mu s,r} + g^{u\lambda,r} g_{\lambda\mu} \, g^{\mu s,v} - g^{us,rv} \big),
\end{align}
where we have used definitions~\eqref{u_deriv_2}. Similarly, we can achieve the factorizations:
\begin{align}
g_{jm,ik} & =
g_{iu} g_{jr} g_{ks} g_{mv} \, \big(
g^{r\lambda,u} g_{\lambda\mu}\, g^{\mu v,s} +
g^{r\lambda,s} g_{\lambda\mu}\, g^{\mu v,u} -
g^{rv,us} \big),
\\
-g_{jk,im} & =
g_{iu} g_{jr} g_{ks} g_{mv} \, \big(
- g^{r\lambda,u} g_{\lambda\mu}\, g^{\mu s,v} -
g^{r\lambda,v} g_{\lambda\mu}\, g^{\mu s,u} +
g^{rs,uv} \big),
\\
-g_{im,jk} & =
g_{iu} g_{jr} g_{ks} g_{mv} \, \big(
- g^{u\lambda,r} g_{\lambda\mu}\, g^{\mu v,s} -
g^{u\lambda,s} g_{\lambda\mu}\, g^{\mu v,r} +
g^{uv,rs} \big).
\label{aux_F}
\end{align}
Inserting~\eqref{aux_A}$\div$\eqref{aux_F} into~\eqref{TensExpr} we can write
$ R_{ijkm} = g_{iu} g_{jr} g_{ks} g_{mv} R^{ursv} $, with~$R^{ursv}$ given  by~\eqref{drct}.
\end{proof}
\begin{proposition}
\label{pr:RDTa}
The dual Riemannian curvature tensor
may also be written
as follows:
\begin{subequations}
\begin{align}
2R^{ursv}  = & 
-g^{us,rv} -g^{rv,us} +g^{rs,uv} +g^{uv,rs} 
\tag{T$_1$} \label{T1a}
\\ & 
- \frac{1}{2} \big\{ \gul{rs}{\rho} g^{\rho\sigma} \gul{uv}{\sigma}\!\!
- \gul{rs}{\rho}\! \big( g^{\rho u,v} + g^{\rho v,u} \big)
- \gul{uv}{\sigma}\! \big( g^{\sigma r,s} + g^{\sigma s,r} \big) \big\}
\tag{T$_2$} \label{T2a}
\\
&  +\frac{1}{2} \big\{ \gul{rv}{\rho} g^{\rho\sigma} \gul{us}{\sigma}\!\!
- \gul{rv}{\rho}\! \big( g^{\rho u,s} + g^{\rho s,u} \big)
- \gul{us}{\sigma}\! \big( g^{\sigma r,v} + g^{\sigma v,r} \big) \big\}
\tag{T$_3$} \label{T3a}
\\ & 
-\frac{1}{2} \big( g^{\lambda r,s} - g^{\lambda s,r} \big)
g_{\lambda\mu} \big( g^{\mu u,v} - g^{\mu v,u} \big)
\tag{T$_4$} \label{T4a}
\\ & 
+\frac{1}{2} \big( g^{\lambda r,v} - g^{\lambda v,r} \big)
g_{\lambda\mu} \big( g^{\mu u,s} - g^{\mu s,u} \big)
\tag{T$_5$} \label{T5a}
\\ & 
+ \big( g^{\lambda r,u} - g^{\lambda u,r} \big)
g_{\lambda\mu} \big( g^{\mu v,s} - g^{\mu s,v} \big).
\tag{T$_6$} \label{T6a}
\end{align}
\end{subequations}
\end{proposition}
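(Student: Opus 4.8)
The plan is to start from the expression \eqref{drct} for $2R^{ursv}$ just established and show that, term by term, it regroups into the six blocks \eqref{T1a}--\eqref{T6a}. Three of the blocks match almost immediately. The four pure second-derivative terms $-g^{us,rv}-g^{rv,us}+g^{rs,uv}+g^{uv,rs}$ of \eqref{drct} are literally \eqref{T1a}. Likewise the four first-derivative products on the second line of \eqref{drct} coincide with \eqref{T6a}: using the symmetry $g^{ij}=g^{ji}$ of the cometric in its upper indices (so that $g^{r\lambda,u}=g^{\lambda r,u}$, and similarly for the companions), the second line of \eqref{drct} is exactly the expansion of $(g^{\lambda r,u}-g^{\lambda u,r})g_{\lambda\mu}(g^{\mu v,s}-g^{\mu s,v})$.

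Next I would expand the two Christoffel-symbol products in \eqref{drct}. Inserting the closed form $\Gamma_u^{rs}=-\tfrac12 g_{u\varphi}\big(g^{s\varphi,r}+g^{r\varphi,s}-g^{rs,\varphi}\big)$ into $-2\Gamma^{rs}_\rho\Gamma^{uv}_\sigma g^{\rho\sigma}$, the factor $g^{\rho\sigma}$ collapses the two metric factors $g_{\rho\varphi}g_{\sigma\psi}$ into a single $g_{\varphi\psi}$, leaving the trinomial product $-\tfrac12 g_{\varphi\psi}\big(g^{s\varphi,r}+g^{r\varphi,s}-g^{rs,\varphi}\big)\big(g^{v\psi,u}+g^{u\psi,v}-g^{uv,\psi}\big)$, and the analogous expansion of $+2\Gamma^{rv}_\rho\Gamma^{us}_\sigma g^{\rho\sigma}$ gives $+\tfrac12 g_{\varphi\psi}\big(g^{v\varphi,r}+g^{r\varphi,v}-g^{rv,\varphi}\big)\big(g^{s\psi,u}+g^{u\psi,s}-g^{us,\psi}\big)$. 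The two bookkeeping identities that make these readable are $g^{rs,\varphi}g_{\varphi\psi}=\gul{rs}{\psi}$ and $g^{rs,\varphi}g^{uv,\psi}g_{\varphi\psi}=\gul{rs}{\rho}g^{\rho\sigma}\gul{uv}{\sigma}$, both obtained by lowering through definition~\eqref{u_deriv_2}.

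Among the nine products from the first trinomial expansion, the three that still carry a pure $\gul{rs}{\rho}$ or $\gul{uv}{\sigma}$ factor (i.e.\ those using the $-g^{rs,\varphi}$ or $-g^{uv,\psi}$ piece) reproduce exactly \eqref{T2a}, and the three corresponding terms of the second expansion reproduce \eqref{T3a}; this is a direct application of the two identities above together with $g^{ij}=g^{ji}$. What then remains is the four ``cross'' products from each expansion --- those using neither the $g^{rs,\varphi}$ nor the $g^{uv,\psi}$ piece --- which are bilinears of the shape $g^{\lambda a,b}g_{\lambda\mu}g^{\mu c,d}$. I would collect these eight bilinears together with the four first-derivative products on the third line of \eqref{drct} and show that their total is precisely $\eqref{T4a}+\eqref{T5a}$.

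The main obstacle is this last regrouping: twelve look-alike bilinears must reorganize into the two fully antisymmetrized blocks \eqref{T4a} and \eqref{T5a}. Making the cancellations transparent requires exploiting two symmetries at once --- the symmetry $g^{ab,c}=g^{ba,c}$ in the upper indices of each cometric derivative, and the symmetry of the contraction $g^{\lambda a,b}g_{\lambda\mu}g^{\mu c,d}=g^{\lambda c,d}g_{\lambda\mu}g^{\mu a,b}$ coming from $g_{\lambda\mu}=g_{\mu\lambda}$. Tracking the coefficients carefully, the eight Christoffel cross terms combine with the four third-line terms so that the pieces symmetric in each index pair cancel, leaving exactly the antisymmetric combinations $(g^{\lambda r,s}-g^{\lambda s,r})$, $(g^{\mu u,v}-g^{\mu v,u})$ for \eqref{T4a} and $(g^{\lambda r,v}-g^{\lambda v,r})$, $(g^{\mu u,s}-g^{\mu s,u})$ for \eqref{T5a}. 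The computation is pure index bookkeeping, but it is error-prone precisely because the terms are nearly indistinguishable; fixing a single convention (the summed indices always sitting in the first upper slot of each factor) is what renders the cancellations visible.
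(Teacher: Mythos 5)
Your proposal is correct and takes essentially the same approach as the paper's proof: both start from \eqref{drct}, read off \eqref{T1a} and \eqref{T6a} directly, expand the two dual-Christoffel products through the closed form $\Gamma_u^{rs}=-\tfrac12 g_{u\varphi}\big(g^{s\varphi,r}+g^{r\varphi,s}-g^{rs,\varphi}\big)$, and regroup the resulting first-derivative bilinears into the antisymmetrized blocks. The only difference is bookkeeping order --- the paper pairs each Christoffel product with two of the four terms on the third line of \eqref{drct}, obtaining $\mathrm{T}_3+\mathrm{T}_5$ and $\mathrm{T}_2+\mathrm{T}_4$ in two self-contained computations, whereas you extract \eqref{T2a} and \eqref{T3a} first and then collect all twelve leftover bilinears into $\mathrm{T}_4+\mathrm{T}_5$ at once; I checked that this collective cancellation does work out, so your plan is sound.
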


\begin{proof}
We will expand and recombine 
the terms in expression~\eqref{drct}.
The terms involving second derivatives need no 
manipulation and correspond to term~$\mathrm{T}_1$. 
The terms in the second line of~\eqref{drct}
can be written as:
\begin{align*}
g^{r\lambda,u} g_{\lambda\mu}\, g^{\mu v,s}\!
- g^{r\lambda,u} g_{\lambda\mu}\,
&
g^{\mu s,v}\! + g^{u\lambda,r} g_{\lambda\mu}\, g^{\mu s,v}\!
- g^{u\lambda,r} g_{\lambda\mu}\, g^{\mu v,s}\!
= ( g^{\lambda r,u}\! - g^{\lambda u,r} ) g_{\lambda\mu} ( g^{\mu v,s}\! - g^{\mu s,v} )
\end{align*}
which is precisely~$\mathrm{T}_6$. It is also the case that:
\begin{align*}
2\, \Gamma&^{rv}_\rho \, \Gamma^{us}_\sigma \, g^{\rho\sigma}
- g^{r\lambda,v} g_{\lambda\mu}\, g^{\mu s,u}
- g^{u\lambda,s} g_{\lambda\mu}\, g^{\mu v,r}
\\ =
& \; {\textstyle\frac{1}{2}}
\big[ ( g^{\lambda r,v} + g^{\lambda v,r} ) - g^{rv,\lambda} \big]
g_{\lambda\rho}\, g^{\rho\sigma} g_{\sigma\mu}
\big[ ( g^{\mu u,s} + g^{\mu s,u} ) - g^{us,\mu} \big]
- g^{r\lambda,v} g_{\lambda\mu}\, g^{\mu s,u}
- g^{u\lambda,s} g_{\lambda\mu}\, g^{\mu v,r}
\\ =
& \; {\textstyle\frac{1}{2}}
\big\{ \gul{rv}{\rho} \! g^{\rho\sigma} \gul{us}{\sigma}
\! - \gul{rv}{\rho} \! ( g^{\rho u,s} + g^{\rho s,u} )
- \gul{us}{\sigma} \! ( g^{\sigma r,v} + g^{\sigma v,r} ) \big\}
\\ &
+ {\textstyle\frac{1}{2}}
( g^{\lambda r,v} + g^{\lambda v,r} ) g_{\lambda\mu}
( g^{\mu u,s} + g^{\mu s,u} )
- g^{r\lambda,v} g_{\lambda\mu}\, g^{\mu s,u}
- g^{u\lambda,s} g_{\lambda\mu}\, g^{\mu v,r}
\\
= & \; \mathrm{T}_3
+ {\textstyle\frac{1}{2}} ( g^{\lambda r,v} - g^{\lambda v,r} )
g_{\lambda\mu} ( g^{\mu u,s} - g^{\mu s,u} )
= \mathrm{T}_3 +\mathrm{T}_5.
\end{align*}
Similarly one can prove that:
$-2 \Gamma^{rs}_\rho \Gamma^{uv}_\sigma g^{\rho\sigma}
+ g^{r\lambda,s} g_{\lambda\mu}\, g^{\mu v,u}
+ g^{u\lambda,v} g_{\lambda\mu}\, g^{\mu s,r}
=\mathrm{T}_2+\mathrm{T}_4
$.
\end{proof}
\par
For any point $p\in \mathcal{M}$ and an arbitrary 
pair of  tangent vectors~$X=X^i\pari$,
$Y=Y^i\pari$ in~$T_p\mathcal{M}$ we consider the covectors
$X^\flat=X_idx^i$ and $Y^\flat=Y_idx^i$ in~$T^\ast_p\mathcal{M}$,
with $X_i=g_{ij}X^j$ and  $Y_i=g_{ij}Y^j$.
The numerator of sectional curvature~\eqref{seccurv}
may be rewritten as
$ R_{ijkm} X^iY^jY^kX^m  = R^{ursv} X_u Y_r Y_s X_v.$

\begin{thm_mf}
\label{th:MaFo}	
For an arbitrary pair of vectors~$X=X^i\partial_i$ and
$Y=Y^i\partial_i$ in $T_p\mathcal{M}$
the numerator of sectional curvature~\eqref{seccurv}
at point~$p\in\mathcal{M}$ may be written as:
\begin{equation*}
\boxed{
\begin{aligned}
&g\big(R(X,Y)Y,X\big) =
R^{ursv} X_u Y_r Y_s X_v =
\\&\quad
=  \big ( X_u Y_r - Y_u X_r \big)
\Big( \tfrac12g^{su,rv} +\tfrac12\gul{us}{\rho}\! g^{\rho r,v} -\tfrac{1}{8}\, \gul{us}{\sigma}\! g^{rv,\sigma}
-\tfrac{3}{4}\, g^{ \lambda u,r} g_{\lambda\mu}\, g^{\mu s,v} \Big)
\big(X_s Y_v - Y_s X_v \big).
\end{aligned}}
\end{equation*}
Moreover, if we extend $X^\flat$ and $Y^\flat$ locally on $\mathcal M$ to {\em constant} 1-forms in terms of local coordinates (i.e.~make its coefficients $X_u, Y_r$ constant functions), then the formula becomes: 
\begin{equation*}
\boxed{
\begin{aligned}
&g\big(R(X,Y)Y,X\big) = \\ & \quad
=\left\{\tfrac12XX(\|Y^\flat\|^2)
+\tfrac12YY(\|X^\flat\|^2)
-\tfrac12(XY+YX)g^{-1}(X^\flat,Y^\flat)\right\}
\\&\qquad\quad
+\left\{\tfrac14\|d(g^{-1}(X^\flat,Y^\flat))\|^2
-\tfrac14g^{-1}\big(d(\|X^\flat\|^2),d(\|Y^\flat\|^2)\big)\right\}
-\tfrac34g\big([X,Y],[X,Y]\big),
\end{aligned}}
\end{equation*}
where the term in the first set of braces equals the sum of the first two terms in the coordinate form, the term in the second set of braces equals the third term in the coordinate form and finally the last terms are equal.
In the above formula, $\|X^\flat\|^2=X_sX_ug^{su}$ and $\|Y^\flat\|^2=Y_rY_vg^{rv}$.
\end{thm_mf}
\begin{proof}
We will write the six terms  provided by Proposition~\ref{pr:RDTa} as~$\mathrm{T}_i^{ursv}$,
$i=1,\ldots,6$. 
We have: 
\begin{align*}
\mathrm{T}_1^{ursv} X_uY_rY_sX_v &=
-g^{us,rv} X_uY_rY_sX_v -g^{rv,us} X_uY_rY_sX_v +g^{rs,uv} X_uY_rY_sX_v +g^{uv,rs}  X_uY_rY_sX_v
\\ &
= g^{us,rv} (- X_uY_rY_sX_v - X_rY_uY_vX_s + X_rY_uY_sX_v + X_uY_rY_vX_s )
\\ &=
g^{us,rv} (X_uY_r-Y_uX_r) (X_sY_v-Y_sX_v),
\end{align*}
where the second step follows from relabeling the indices.
As far as~$\mathrm{T}_2$ and~$\mathrm{T}_3$
are concerned,
\begin{align*}
(\mathrm{T}_2^{ursv} +&\,\mathrm{T}_3^{ursv}) \, X_uY_rY_sX_v =
-\textstyle{\frac{1}{2}} \big\{
Y_rY_s \gul{rs}{\rho} g^{\rho\sigma} \gul{uv}{\sigma}\!\! X_uX_v -
Y_rX_v \gul{rv}{\rho} g^{\rho\sigma} \gul{us}{\sigma}\!\! X_uY_s \big\}
\\ & \qquad\qquad\qquad\qquad
\,\,\,
+\! {\textstyle\frac{1}{2}} \big \{
Y_rY_s\, \gul{rs}{\rho}\!\! \big( g^{\rho u,v} + g^{\rho v,u} \big) X_uX_v
+ X_uX_v\, \gul{uv}{\rho}\!\! \big( g^{\rho r,s} + g^{\rho s,r} \big) Y_rY_s
\\ &
\qquad\qquad\qquad\qquad
\,\,\,
- Y_rX_v\, \gul{rv}{\rho}\!\! \big( g^{\rho u,s} + g^{\rho s,u} \big) X_uY_s
- X_uY_s\, \gul{us}{\rho}\!\! \big( g^{\rho r,v} + g^{\rho v,r} \big) Y_rX_v \big\}
\\
= & -{\textstyle\frac{1}{4}}\big\{
2Y_rY_s \gul{rs}{\rho} g^{\rho\sigma} \gul{uv}{\sigma}\!\! X_uX_v
- 2Y_rX_v \gul{rv}{\rho} g^{\rho\sigma} \gul{us}{\sigma}\!\! X_uY_s
\big\}
\\ &
+{\textstyle\frac{1}{2}}\big\{
2 Y_rY_s\, \gul{rs}{\rho}\! g^{\rho u,v} X_uX_v
+ 2 X_uX_v\, \gul{uv}{\sigma}\! g^{\sigma r,s} Y_rY_s\,
- 2 X_uY_s\, \gul{us}{\rho}\!\! \big( g^{\rho r,v} + g^{\rho v,r} \big) Y_rX_v
\big\}
\\
\stackrel{(\ast)}{=} &
-{\textstyle\frac{1}{4}}\, \gul{rv}{\rho} g^{\rho\sigma} \gul{us}{\sigma}\!\! \big\{
Y_uY_sX_rX_v + Y_rY_vX_uX_s - Y_rX_vX_uY_s - Y_vX_rX_sY_u \big\}
\\
& + \gul{us}{\rho}\! g^{\rho r,v} \big\{
Y_uY_sX_rX_v + X_uX_sY_rY_v - X_uY_sY_rX_v - X_sY_uY_vX_r \big\}
\\
= & \big( -{\textstyle\frac{1}{4}}\, \gul{us}{\sigma}\! g^{rv,\sigma} + \gul{us}{\rho}\! g^{\rho r,v} \big)
(X_uY_r-Y_uX_r) (X_sY_v-Y_sX_v),
\end{align*}
where, once again, step~$(\ast)$ follows from relabeling the indices.
Also, one can easily see that
$ \mathrm{T}_4^{ursv} X_uY_rY_sY_v =
-\frac{1}{2} Y_rY_s
( g^{\lambda r,s} - g^{\lambda s,r} ) g_{\lambda\mu}
( g^{\mu u,v} - g^{\mu v,u} ) X_uX_v =0.$
Finally,
\begin{align*}
(\mathrm{T}_5&^{ursv}+ \mathrm{T}_6^{ursv}) X_uY_rY_sY_v
\\
=& \, {\textstyle \frac{1}{2}}\,
Y_rX_v ( g^{\lambda r,v} - g^{\lambda v,r} ) g_{\lambda\mu} ( g^{\mu u,s} - g^{\mu s,u} ) X_u Y_s
+ Y_rX_u ( g^{\lambda r,u} - g^{\lambda u,r} ) g_{\lambda\mu} ( g^{\mu v,s} - g^{\mu s,v} ) X_vY_s
\\
=& \, {\textstyle\frac{3}{2}} \, 
Y_rX_u ( g^{\lambda r,u} - g^{\lambda u,r} ) g_{\lambda\mu} ( g^{\mu v,s} - g^{\mu s,v} ) X_vY_s
\\
=& \, {\textstyle\frac{3}{2}}\,
Y_rX_u \big\{ g^{\lambda r,u} g_{\lambda\mu} g^{\mu v,s} - g^{\lambda r,u} g_{\lambda\mu} g^{\mu s,v}
- g^{\lambda u,r} g_{\lambda\mu} g^{\mu v,s}
+ g^{\lambda u,r} g_{\lambda\mu} g^{\mu s,v} \big\} X_vY_s
\\
=& -{\textstyle\frac{3}{2}}\,
g^{\lambda u,r} g_{\lambda\mu} g^{\mu s,v} \big\{
-Y_uX_rX_sY_v +Y_uX_rX_vY_s +Y_rX_uX_sY_v -Y_rX_uX_vY_s \big\}
\\
=& -{\textstyle\frac{3}{2}}\,
g^{\lambda u,r} g_{\lambda\mu} g^{\mu s,v} (X_uY_r-Y_uX_r) (X_sY_v-Y_sX_v).
\end{align*}
Divide by 2 to get the coordinate formula. The non-local version of the formula follows easily by bringing the $X$ and $Y$'s into the formula. Thus (indicating~$\partial_i$ with the subscript $_{,i}$):
\begin{align*}
Y_u X_r 
&
(g^{su,rv}
+
\gul{su}{\rho}\!
g^{\rho r, v})Y_s X_v 
=
X_rX_v
\big(Y_sY_u\,\, \gul{su}{\rho\sigma}g^{\rho\xi}g^{\sigma\eta}
+
Y_sY_u\,\,\gul{su}{\rho}\gul{\rho r}{\sigma}g^{\sigma v}\big)
\\
&= X_r X_v \big( (\|Y^\flat\|^2)_{,\rho \sigma}\, g^{\rho r} g^{\sigma v} + (\|Y^\flat\|^2)_{,\rho}\, 
\gul{\rho r}{\sigma}
g^{\sigma v} \big)
\qquad
\qquad
\quad
\mbox{(because } Y_s,Y_u\mbox{ are constants)}
\\
&
= X_v g^{\sigma v} \big( X_r g^{\rho r} (\|Y^\flat\|^2)_{,\rho}\big)_{,\sigma} 
= X^\sigma \big(X^\rho (\|Y^\flat\|^2)_{,\rho} \big)_{,\sigma} = 
XX\big(\|Y^\flat\|^2\big).
\end{align*}
A typical term from the third part of Mario's formula is rewritten like this:
\begin{align*}
Y_u X_r\, \gul{us}{\sigma} \!g^{rv,\sigma} Y_s X_v &=
X_r X_v \big(\|Y^\flat\|^2\big)_{,\sigma} \,g^{rv,\sigma} 
= \big(\|Y^\flat\|^2\big)_{,\sigma} \big(\|X^\flat\|^2\big)_{,\rho} \,g^{\rho \sigma}
= g^{-1}\big( d(\|Y^\flat\|^2), d(\|X^\flat\|^2)\big);
\end{align*}
the other terms are similar.
Finally, it is the case that:
\begin{align*}
(X_uY_r-Y_uX_r)g^{\lambda u, r}\partial_\lambda
&=
(X_uY_r-Y_uX_r)\gul{\lambda u}{\eta}g^{\eta r}\partial_\lambda
=
(X_uY^\eta-Y_uX^\eta)\gul{\lambda u}{\eta}\partial_\lambda
\\
&=
\big((X_u g^{\lambda u})_{,\eta}\,Y^\eta
-(Y_u g^{\lambda u})_{,\eta}\,X^\eta\big)\partial_\lambda
=
\big(X^\lambda_{\;\;,\eta}\,Y^\eta
-Y^\lambda_{\;\;,\eta}\,X^\eta)\partial_\lambda
=-[X,Y],
\end{align*}
and the proof is easily completed.
\end{proof}

\begin{remark}
It is convenient to split Mario's  formula in four terms:
\begin{align}
\label{split_1}
R_1:= & \,\textstyle\frac{1}{2} ( X_u Y_r - Y_u X_r \big) g^{su,rv} \big(X_s Y_v - Y_s X_v \big),
\\
\label{split_2}
R_2:= & \,\textstyle\frac{1}{2} ( X_u Y_r - Y_u X_r \big) \gul{us}{\rho} g^{\rho r,v} \big(X_s Y_v - Y_s X_v \big),
\\
\textstyle
\label{split_3}
R_3:= & \,\textstyle\frac{1}{2} ( X_u Y_r - Y_u X_r \big) \big( - \tfrac{1}{4}\, \gul{us}{\sigma}\! g^{rv,\sigma} \big)
\big(X_s Y_v - Y_s X_v \big),
\\
\label{split_4}
R_4:= & \,\textstyle\frac{1}{2}
( X_u Y_r - Y_u X_r \big) \big( -\tfrac{3}{2}\, g^{ \lambda u,r} g_{\lambda\mu}\, g^{\mu s,v} \big) 
\big(X_s Y_v - Y_s X_v \big);
\end{align}
all the terms with the exception of~$R_4$
(where $g$ appears, but \em not \em its derivatives) depend only on elements of the cometric and their derivatives.
\end{remark}
\begin{remark} 
The  \em denominator \em of sectional curvature~\eqref{seccurv} 
can also be expressed in terms of the cometric: 
\begin{equation}
\label{denom}
\|X\|_g^2\|Y\|_g^2-\langle X,Y \rangle_g^2= X_uX_sY_rY_v(g^{us}g^{rv}-g^{uv}g^{sr}).
\end{equation}
\end{remark}

\section{Curvature of the Manifolds of Landmarks}
\label{secCurvLM}

In this section we will apply Mario's formula to the computation of sectional curvature for the Riemannian manifold of landmarks, introduced in section~\ref{Mot_Land}. We first introduce the Hamiltonian 
formalism, since it will allow us to write the geodesic equations in a simple form and to introduce
geometric quantities that will eventually appear in the formula for sectional curvature.

\subsection{Hamiltonian formalism}
\label{ham_form}
On the $ND$-dimensional manifold $\mathcal L=\mathcal L^N(\mathbb R^D)$ of landmarks we consider 
the Riemannian metric~$g$ given, in coordinates, by the matrix~\eqref{def:met_t}; it is in block-diagonal form
and we write its generic element as~$g_{(ai)(bj)}$, with
$a,b=1,\ldots,N$ (landmark labels) and $i,j=1,\ldots,D$ (coordinate labels, respectively of landmarks~$a$ and~$b$). More precisely: the matrix $g(q)$ is made of~$D$ square \mbox{($N\times N$)} blocks; indices~$i,j=1,\ldots,D$ indicate the block, whereas indices~$a,b=1,\ldots,N$ locate the element within the~$(i,j)$-block. Therefore if we indicate with~$h_{ab}(q)$ 
the generic element of the ~$N\times N$ matrix $\big(\kermat(q)+\frac{\mathbb I_N}{\lambda}\big)^{-1}$ we have that 
$$
g_{(ai)(bj)}=h_{ab}(q)\,\delta_{ij}, \qquad a,b=1,\ldots,N, \quad i,j=1,\ldots,D,
$$
where~$\delta_{ij}$ is Kronecker's delta. Similarly, if we indicate as~$g^{(ai)(bj)}$ the elements of the cometric tensor~$g(q)^{-1}$, they are given by
$ g^{(ai)(bj)}(q)=h^{ab}(q)\,\delta^{ij}$,
where $h^{ab}(q)=K(q^a-q^b)+\frac{\delta^{ab}}{\lambda}$. In analogy with the notation introduced in section~\ref{sec:MaFo}
we also denote the partial derivatives by
$\gul{(ai)(bj)}{(ck)}=\frac{\partial}{\partial q^{ck}}g^{(ai)(bj)}$ and $\gul{(ai)(bj)}{(ck)(d\ell)}=
\frac{\partial^2}{\partial q^{ck}\partial q^{d\ell}}g^{(ai)(bj)}$;
they will be computed later.

For simplicity \underline{from now on we shall assume
that~$\lambda=\infty$}, i.e.~that we are dealing with \em exact matching \em of landmarks
so that~$\mathcal{L}^N(\mathbb{R}^D)$ has the form~\eqref{lambdainf}.
The element of the cometric becomes $ g^{(ai)(bj)}(q)=K(q^a-q^b)\,\delta^{ij}$ and 
the Hamiltonian~\cite[p.~50]{jost} for the system can be written as:
$$
\mathcal{H}(p,q) =\frac{1}{2}\,p^Tg(q)^{-1}p
=\frac{1}{2}\sum_{a,b=1}^N\sum_{i,j=1}^D g^{(ai)(bj)}(q)\,p_{ai}p_{bj}
= \frac{1}{2} \sum_{a,b=1}^N\sum_{i,j=1}^D K(q^a-q^b) \,\delta^{ij}\,p_{ai}p_{bj},
$$
that is $
\displaystyle
\mathcal{H}(p,q) =\frac{1}{2} \sum_{a,b=1}^N K(q^a-q^b)\big\langle p_a,p_b\big\rangle_{\mathbb{R}^D}.$

\begin{proposition}
\label{prop_ham}
Hamilton's equations for the Riemannian manifold of landmarks are: 
\begin{equation}
\label{ham_eq}
\begin{aligned}
\dot{q}^a & = \sum_{b=1}^NK(q^a-q^b)\,p_b \\
\dot{p}_a &= - \sum_{b=1}^N\nabla K(q^a-q^b)\,\big\langle p_a,p_b \big\rangle_{\mathbb{R}^D}
\end{aligned}
\qquad a=1,\ldots,N.
\end{equation}
\end{proposition}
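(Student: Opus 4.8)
The plan is to invoke the fact that, by definition, Hamilton's equations for a Hamiltonian $\mathcal{H}(p,q)$ are nothing but the pair of identities $\dot q^{ai} = \partial\mathcal{H}/\partial p_{ai}$ and $\dot p_{ai} = -\partial\mathcal{H}/\partial q^{ai}$, and then simply to compute these two families of partial derivatives of the explicit expression $\mathcal{H}(p,q) = \tfrac12\sum_{c,b=1}^N K(q^c-q^b)\langle p_c,p_b\rangle_{\mathbb{R}^D}$ obtained just above the statement. So the proof is entirely a matter of differentiation, and the only ingredient beyond the chain rule is the symmetry of the radial kernel $K(x-y)=\gamma(\|x-y\|_{\mathbb{R}^D})$.

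First I would compute the momentum derivative. Writing $\langle p_c,p_b\rangle_{\mathbb{R}^D}=\sum_j p_{cj}p_{bj}$ and differentiating with respect to $p_{ai}$ produces two families of surviving terms, one from $c=a$ and one from $b=a$, namely $\tfrac12\sum_b K(q^a-q^b)p_{bi}+\tfrac12\sum_c K(q^c-q^a)p_{ci}$. Since $K$ is even, i.e.~$K(q^c-q^a)=K(q^a-q^c)$, these two sums coincide after relabeling the dummy index, absorbing the factor $\tfrac12$ and giving $\dot q^{ai}=\sum_b K(q^a-q^b)p_{bi}$, which is the first line of~\eqref{ham_eq} in vector form.

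Next I would compute the position derivative. Here $\partial_{q^{ai}}K(q^c-q^b)$ is nonzero only when $c=a$ or $b=a$, and by the chain rule these two cases carry opposite signs: the case $c=a$ contributes $+(\nabla K(q^a-q^b))_i$ while the case $b=a$ contributes $-(\nabla K(q^c-q^a))_i$. The one point requiring attention is to note that $\nabla K$ is an \emph{odd} function, being the gradient of the even radial kernel, so that $-\nabla K(q^c-q^a)=\nabla K(q^a-q^c)$; after relabeling, the two contributions then agree, the factor $\tfrac12$ is again absorbed, and one obtains $\partial\mathcal{H}/\partial q^{ai}=\sum_b(\nabla K(q^a-q^b))_i\langle p_a,p_b\rangle_{\mathbb{R}^D}$. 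Negating gives the second line of~\eqref{ham_eq}.

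I do not expect any genuine obstacle: the computation is routine. The only pitfalls are bookkeeping, namely correctly tracking the symmetry factor of $2$ coming from the double sum and the sign flip produced by the oddness of $\nabla K$; getting either wrong would spoil the clean cancellation of the $\tfrac12$. (A minor point worth a remark is that the diagonal terms $c=b$ are harmless, since $\nabla K(0)=0$ by radial symmetry.)
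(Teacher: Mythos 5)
Your proposal is correct and follows essentially the same route as the paper: direct differentiation of $\mathcal{H}(p,q)=\tfrac12\sum_{c,b}K(q^c-q^b)\langle p_c,p_b\rangle_{\mathbb{R}^D}$, using the chain rule together with the evenness of $K$ (hence the skew-symmetry $\nabla K(q^b-q^a)=-\nabla K(q^a-q^b)$) to merge the two families of terms and absorb the factor $\tfrac12$. The only cosmetic difference is that the paper obtains the first equation primarily by citing the momenta formula~\eqref{Expr_Mom} with $\lambda=\infty$, mentioning the computation of $\partial\mathcal{H}/\partial p_{ai}$ (your route) as an equivalent alternative.
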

\begin{proof} Equation~\eqref{Expr_Mom} can be written as 
$\dot{q}^{ai}=\sum_{b=1}^NK(q^a-q^b)\,p_{bi}$,
for $a=1,\ldots,N$, $i=1,\ldots,D$; alternatively,
computing $\dot{q}^{ai}=\frac{\partial \mathcal{H}}{\partial p_{ai}}$
yields the same result. Also:
\begin{align}
\textstyle\frac{\partial}{\partial q^{ai}}
K(q^{b1}-q^{c1},\ldots,q^{bD}-q^{cD}) &=
\textstyle\sum_{\ell=1}^D \frac{\partial K}{\partial x^{\ell}} (q^{b}-q^{c}) \frac{\partial}{\partial q^{ai}} (q^{b\ell}-q^{c\ell})
\nonumber \\
\label{der_K}
&=
\textstyle\sum_{\ell=1}^D \frac{\partial K}{\partial x^{\ell}} (q^{b}-q^{c})\, (\delta^b_a-\delta^c_a)\delta^\ell_i =
\textstyle\frac{\partial K}{\partial x^i} (q^{b}-q^{c})\, (\delta^b_a-\delta^c_a)
\end{align}
so that
\begin{align*}
\dot{p}_{ai} & = -\textstyle\frac{\partial \mathcal{H}}{\partial q^{ai}}(p,q)=
-\frac{1}{2}\sum_{c=1}^N \frac{\partial K}{\partial x^i} (q^{a}-q^{c})\, \,\langle p_a,p_c\rangle_{\mathbb{R}^D} 
+\frac{1}{2}\sum_{b=1}^N \frac{\partial K}{\partial x^i} (q^{b}-q^{a})\, \,\langle p_b,p_a\rangle_{\mathbb{R}^D} \\
& 
\stackrel{(\ast)}{=} -\textstyle\sum_{b=1}^N \frac{\partial K}{\partial x^i} (q^{a}-q^{b})\, \,\langle p_a,p_b\rangle_{\mathbb{R}^D};
\end{align*}
in $(\ast)$ we used the skew-symmetry of~$\nabla K(q^a-q^b)$ in indices~$a$ and $b$, which
follows from~(K2).
\end{proof}
\begin{corollary} 
\label{cor_momenta}
If 
$p_a(t_0)=0$ for some landmark~$a=1,\ldots, N$ and time $t_0\in \mathbb{R}$,
then $p_a(t)\equiv 0$.
\end{corollary}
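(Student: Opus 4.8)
The plan is to read off from Hamilton's equations~\eqref{ham_eq} of Proposition~\ref{prop_ham} that the evolution of the single momentum $p_a$ is governed by a \emph{linear homogeneous} ordinary differential equation in $p_a$, and then to conclude by uniqueness of solutions. Concretely, the second line of~\eqref{ham_eq} reads in coordinates $\dot{p}_{ai} = -\sum_{b=1}^N \frac{\partial K}{\partial x^i}(q^a-q^b)\sum_{j=1}^D p_{aj}p_{bj}$, and every summand on the right carries exactly one factor of a component of $p_a$. Hence I would rewrite it as $\dot{p}_a = M(t)\,p_a$, where $M(t)$ is the $D\times D$ matrix with entries $M_{ij}(t) = -\sum_{b=1}^N \frac{\partial K}{\partial x^i}\big(q^a(t)-q^b(t)\big)\,p_{bj}(t)$.

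First I would fix the given solution $(q(\cdot),p(\cdot))$ of the full coupled Hamiltonian system, so that along it every $q^b(t)$ and every $p_b(t)$ is a determined continuous function of $t$; consequently $M(t)$ is a well-defined, continuous, matrix-valued function of time, and the identity $\dot{p}_a = M(t)\,p_a$ becomes a genuine linear ODE for the single unknown $p_a$ with continuous coefficients.

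The key step is then the uniqueness argument: the constant function $p_a\equiv 0$ is visibly a solution of $\dot{p}_a = M(t)\,p_a$, and by hypothesis $p_a(t_0)=0$ agrees with it at $t=t_0$. Since solutions of a linear system with continuous coefficients are unique---equivalently, a Gr\"onwall estimate bounds $\|p_a(t)\|_{\mathbb R^D}$ by a multiple of $\|p_a(t_0)\|_{\mathbb R^D}=0$---we obtain $p_a(t)\equiv 0$, as claimed.

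The only point requiring care is the apparent circularity that $M(t)$ seems to involve $p_a$ itself through the $b=a$ term. I would dispose of this in either of two ways: either by noting that the kernel is radial, $K(x-y)=\gamma(\|x-y\|_{\mathbb R^D})$, so that $\nabla K(0)=0$ and the self-interaction term $b=a$ drops out of $M$ entirely; or, more simply, by observing that once the full trajectory is regarded as \emph{given}, $M(t)$ is just a fixed continuous function of $t$ and the linear-ODE uniqueness applies verbatim irrespective of how $M$ was assembled. This is the main (and essentially the only) obstacle, and it is mild.
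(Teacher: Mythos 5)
Your proof is correct and takes exactly the route the paper intends: the paper states this corollary without proof as an immediate consequence of Proposition~\ref{prop_ham}, the implicit argument being precisely yours---each term of $\dot{p}_a$ carries a factor of $p_a$, so along the given trajectory $p_a$ satisfies a linear homogeneous ODE $\dot{p}_a=M(t)\,p_a$ with continuous coefficients, and uniqueness (Gr\"onwall) forces $p_a\equiv 0$. Your handling of the apparent self-reference of $M(t)$ (treat the full solution as given, so $M$ is just a known continuous function of $t$) is the right way to make the argument rigorous.
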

\subsection{Notation}
\em From now on we shall also assume that\em~(K3) \em holds\em\/, i.e.~that the kernel $K$
is twice continuously differentiable;
for the time being we will not assume rotational invariance. We define:
\begin{equation}
\label{K_ders}
\begin{aligned}
& & K^{ab}&:= K(q^a-q^b) \in \mathbb R,  \\ 
\partial_iK(x) &:=
\frac{\partial K}{\partial x^i}(x), & \partial_iK^{ab} &:=
\partial_iK(q^a-q^b)\in \mathbb R, \\ 
\nabla K &:= (\partial_1 K, \cdots, \partial_D K)^T, & \nabla K^{ab} &:= \nabla  K(q^a-q^b)\in \mathbb R^D, \\
\partial^2_{ij} K(x) &:= \frac{\partial^2 K}{\partial x_i \partial x_j} (x), &
\partial^2_{ij} K^{ab} &:= \partial^2_{ij} K(q^a-q^b) \in \mathbb R, \\
D^2 \! K &:= \text{Hessian} (K),
&D^2 \! K^{ab} &:=  D^2 \! K (q^a-q^b) \in \mathbb R^{D\times D}.
\end{aligned} \end{equation}
Note that~$\nabla K^{ab}=-\nabla K^{ba}, \nabla K^{aa}=0$ and $D^2K^{ab}=D^2K^{ba}$, for all~$a,b=1,\ldots,N$, by~(K2).
\par
For a fixed set of landmark points $q$ in
$\mathcal{L}=\mathcal{L}^N(\mathbb{R}^D)$
consider any pair of cotangent vectors 
$\alpha,\beta\in T_q^\ast\mathcal{L}$: we shall write
$\alpha=(\alpha_1,\ldots,\alpha_N)$ and $\beta=(\beta_1,\ldots,\beta_N)$, 
where each component is $D$-dimensional. We define the vector field $\alpha^{\text{hor}}:\mathbb{R}^D\rightarrow \mathbb{R}^D$ and its values at the landmark \mbox{points by:}
\begin{align*}
\alpha^{\text{hor}}(x)  :=&\,\, \sum_{b=1}^N K(x-q^b)\alpha_b
, \quad x\in\mathbb{R}^D, \\
(\alpha^\sharp)^a  
:=&\,\, \alpha^{\text{hor}}(q^a) = 
\sum_{b=1}^N K^{ab}\alpha_b,
\end{align*}
which are, by virtue of formula~\eqref{vstar1}, the velocity field $\alpha^{\text{hor}}$ on 
$\mathbb R^D$ induced by the landmark 
momentum~$\alpha=(\alpha_1,\ldots,\alpha_N)$ and the corresponding landmark velocity 
$\alpha^\sharp\in T_q\mathcal L$ (which obviously 
coincides with the first of~Hamilton's equations~\eqref{ham_eq}). Note that  
$\alpha^\sharp=(\alpha_1^\sharp,\ldots,\alpha_N^\sharp)$
is the tangent vector in~$T_q\mathcal{L}$ with metrically lifted indices.
Note that~$\alpha^\mathrm{hor}$ is
the \em horizontal lift\em~\cite[p.~148]{gallot:04} of the tangent vector~$\alpha^\sharp$
on the admissible Hilbert space~$V$: simply put, of all vector 
fields~$v:\mathbb{R}^D\rightarrow \mathbb{R}^D$
in~$V$ such that $v(q^a)=(\alpha^\sharp)^a$,
$a=1,\ldots, N$,
$\alpha^\mathrm{hor}$ is the one of minimum norm.
\par
The curvature of the Riemannian manifold of landmarks will be expressed in terms of three auxiliary quantities which we now introduce. We will call these {\it force}, {\it discrete strain} and {\it landmark derivative}. We start with the force.
For a fixed covector
$\alpha=(\alpha_1,\ldots,\alpha_N)\in T^\ast_q\mathcal{L}$,
having the dual vector extended to a vector field $\alpha^{\text{hor}}$ on all of $\mathbb R^D$ allows us to take its derivatives at the landmark points, a $D\times D$ matrix-valued function on $\mathbb R^D$:
\begin{align*}
(D\alpha^{\text{hor}})_{i}^j(x) &:= \partial_i(\alpha^{\text{hor}})^j(x)
=\sum_{b=1}^N\alpha_{bj} \partial_i K(x-q^b), \\
(D\alpha^{\mathrm{hor}})_{i}^j(q^a) &= \sum_{b=1}^N \partial_i K^{ab}\alpha_{bj}.
\end{align*} 
\par
For a trajectory~$(q(t),p(t))$ of the cotangent flow one has that
$(p_1(t),\ldots,p_N(t))\in T_{q(t)}^\ast\mathcal{L}$
for all~$t$ where the trajectory is defined, so the above notation can be used to rewrite Hamilton's equations in a more compact form. In  particular, the following result holds.
\begin{proposition}\label{prop_ham_eq1}
The second of Hamilton's equations~\eqref{ham_eq} can be written as
\begin{equation}
\label{ham_eq2}
\dot{p}_a=- Dp^{\mathrm{hor}}(q^a)\cdot p_a \qquad a=1,\ldots, N.
\end{equation}
\end{proposition}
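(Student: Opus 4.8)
The plan is to start from the second of Hamilton's equations as given in \eqref{ham_eq} and read off the asserted identity componentwise after expanding the Euclidean inner product. First I would write the right-hand side in coordinates: since $\langle p_a, p_b\rangle_{\mathbb{R}^D} = \sum_{j=1}^D p_{aj}p_{bj}$, the $i$-th component of $\dot p_a$ becomes
\[
\dot p_{ai} = -\sum_{b=1}^N \partial_i K^{ab}\,\langle p_a,p_b\rangle_{\mathbb R^D}
= -\sum_{j=1}^D\Big(\sum_{b=1}^N \partial_i K^{ab}\,p_{bj}\Big)\,p_{aj},
\]
where I have pulled the factor $p_{aj}$, which does not depend on the summation index $b$, outside the inner sum over $b$.

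The decisive observation is that the bracketed inner sum is exactly the matrix introduced just before the proposition: with $\alpha$ replaced by the momentum $p$, one has $(Dp^{\mathrm{hor}})_i^j(q^a) = \sum_{b=1}^N \partial_i K^{ab}\,p_{bj}$. Substituting this into the previous display yields $\dot p_{ai} = -\sum_{j=1}^D (Dp^{\mathrm{hor}})_i^j(q^a)\,p_{aj}$, which is precisely the $i$-th component of the matrix--vector product $-Dp^{\mathrm{hor}}(q^a)\cdot p_a$, i.e.\ \eqref{ham_eq2}.

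The only step requiring genuine care --- and the place where a transpose or sign slip could creep in --- is the index bookkeeping in the contraction. I would check explicitly that the component index $i$ of the output covector $\dot p_a$ is the \emph{derivative} (lower) index of $Dp^{\mathrm{hor}}$, while the contracted index $j$ is the \emph{vector-field-component} (upper) index that pairs with $p_{aj}$; this is what makes the product $Dp^{\mathrm{hor}}(q^a)\cdot p_a$, rather than its transpose, come out correctly. The one structural point worth noting is that of the two momenta in the symmetric pairing $\langle p_a,p_b\rangle_{\mathbb R^D}$, it is the factor carrying the summation label $b$ that gets absorbed into the derivative matrix $Dp^{\mathrm{hor}}$, while the factor $p_a$ survives as the vector on which that matrix acts. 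With the indices matched, the identity is immediate and the proof is complete.
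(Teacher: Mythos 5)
Your proof is correct and follows essentially the same route as the paper: expand $\langle p_a,p_b\rangle_{\mathbb R^D}$ in components, exchange the sums over $b$ and $j$, recognize the inner sum $\sum_b \partial_i K^{ab}p_{bj}$ as $(Dp^{\mathrm{hor}})_i^j(q^a)$, and read off the matrix--vector product. The extra remark about which index is contracted is a sound check, but the argument itself coincides with the paper's one-line computation.
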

\begin{proof}
$\dot{p}_{ai} = -\sum_{b=1}^N \partial_i K^{ab} \langle p_b,p_a \rangle_{\mathbb{R}^D}
=- \sum_{j=1}^D\big(\sum_{b=1}^N \partial_i K^{ab}\, p_{bj}\big) p_{aj}
=
- \sum_{j=1}^D
(Dp^{\mathrm{hor}})_i^j(q^a)\,p_{aj}
=
-\big(Dp^{\text{hor}}(q^a)\cdot p_a\big)_i
\,$, for any~$a=1,\ldots,N$ and $i=1,\ldots,D$. 
\end{proof}
\noindent{For} a fixed cotangent vector~$\alpha\in T_q^\ast \mathcal{L}$, this motivates defining the negative right-hand side of~\eqref{ham_eq2} to be \em force\em\/:
\begin{align*}
F_a(\alpha,\alpha)&:=D\alpha^{\text{hor}}(q^a)\cdot\alpha_a,\qquad a=1,\ldots, N.
\end{align*}
The full bilinear, symmetrized 
force
may be thought of as a map 
$F: T_q^\ast\mathcal{L}\times T_q^\ast\mathcal{L} \rightarrow T_q^\ast\mathcal{L}$. We call the 
covectors given by this the \em mixed force, \em with the definition:    
\begin{align} 
\notag F_a(\alpha,\beta) :=&\;
\tfrac12 \big( D\alpha^{\text{hor}}(q^a)\cdot\beta_a + D\beta^{\text{hor}}(q^a)\cdot\alpha_a \big), \\
F_{ai}(\alpha,\beta) :=& \;
\tfrac12 \sum_{j=1}^D\sum_{b=1}^N \partial_i K^{ab}\, \big(\alpha_{bj} \beta_{aj} + \beta_{bj} \alpha_{aj}\big) 
\label{Fdef} =
\tfrac12 \sum_{b=1}^N \partial_i K^{ab} \big( 
\langle\alpha_a,\beta_b\rangle_{\mathbb{R}^D} + \langle\beta_a,\alpha_b\rangle_{\mathbb{R}^D} \big),
\end{align}
for $a=1,\ldots,N$ and $i=1,\ldots,D$. 
(The angle brackets are inner products in $\mathbb R^D$.) Note that the ``complete'' cotangent vectors $\alpha=(\alpha_1,\ldots,\alpha_N)$ and $\beta=(\beta_1,\ldots,\beta_N)$ (not only their $a$-components) are needed to compute each component $F_a(\alpha,\beta)$ of the mixed force.
The mixed force has 
simple interpretation. If we extend $\alpha$ and $\beta$ to constant 1-forms on $\mathcal L$, then
the differential of the map 
$q\mapsto g^{-1}_q(\alpha,\beta) = 
\sum_{a,b}K(q^a-q^b)\langle \alpha_a,\beta_b \rangle_{\mathbb{R}^D}$ is given by:
\begin{align}
\label{R3coordfree}
d\big(g^{-1}_q(\alpha,\beta)\big) &= 
\sum_{a,b=1}^N
\sum_{i=1}^D
\partial_i K(q^a-q^b)\,(dq^{ai}-dq^{bi})\,
\langle \alpha_a,\beta_b \rangle_{\mathbb{R}^D} \nonumber
\\&
=
\sum_{a,b=1}^N
\sum_{i=1}^D
\partial_i K(q^a-q^b)\,\big(\langle \alpha_a,\beta_b \rangle_{\mathbb{R}^D} 
+ \langle \beta_a,\alpha_b \rangle_{\mathbb{R}^D}\big)\,dq^{ai}
= 2F(\alpha,\beta).
\end{align}
\par
For a fixed~$\alpha\in T^\ast_q \mathcal{L}$ 
we define the {\it discrete vector strain}:
$$ S^{ab}(\alpha) := (\alpha^\sharp)^a - (\alpha^\sharp)^b, \qquad \text{or} \qquad S^{ab}(\alpha)^i 
:= 
\sum_{c=1}^N \sum_{j=1}^D (K^{ac}-K^{bc})\delta^{ij}\alpha_{cj}
=\sum_{c=1}^N (K^{ac}-K^{bc})\alpha_{ci}$$ 
for all $a,b = 1, \ldots, N$ 
(we call it like that because it measures the infinitesimal
change of relative position of the landmarks~$a$ and~$b$ 
induced by the cotangent vector~$\alpha$). 
These are vectors and are skew-symmetric in the points $a,b$: $S^{ab}(\alpha)=-S^{ba}(\alpha)$, $S^{aa}(\alpha)=0$. The scalar quantities:
$$ C^{ab}(\alpha) := \big\langle (\alpha^\sharp)^a - (\alpha^\sharp)^b,\nabla K^{ab} \big\rangle_{\mathbb{R}^D} = 
\sum_{c=1}^N\sum_{i=1}^D
(K^{ac}-K^{bc})\, \partial_iK^{ab}\ \alpha_{ci}$$
we define to be the \em scalar compressions \em felt by kernel $K$; they are symmetric (since both factors in the inner product are skew-symmetric), i.e.~$C^{ab}(\alpha)=C^{ba}(\alpha)$, with the property $C^{aa}(\alpha)=0$. We call these compressions because if $K$ is a monotone decreasing function of the distance from the origin (the most common case), then $\nabla K^{ab}$ points from $q^a$ to $q^b$.
\par
Finally, if $v$ and $w$ are any two vector fields on the manifold of 
landmarks, we may write their Lie derivative as the difference of covariant derivatives:
$$ [v,w]_\mathcal{L} = \nabla_v^{\mathcal L, \text{flat}}(w) - \nabla_w^{\mathcal L, \text{flat}}(v)$$
where the flat connection on $\mathcal L$ is just the one induced by its embedding in $\mathbb R^{ND}$. In other words, 
$\nabla_v^{\mathcal L, \text{flat}}(w)$ is the usual derivative of $w$ in the direction $v$ if we use the coordinates $q^{ai}$ on landmark space:
that is, $\nabla_v^{\mathcal L, \text{flat}}(w):=\sum_{ai}
v(w^{ai})\partial_{ai}=
\sum_{ai}\sum_{bj}
v^{bj}(\partial_{bj}w^{ai})\partial_{ai}$. If $\alpha$, $\beta$ are constant 1-forms everywhere on 
$\mathcal L^N$ we can 
take $v=\alpha^\sharp$ and $w=\beta^\sharp$, now as vector fields on $\mathcal L$, and then we find:
\begin{align*}
\nabla_{\alpha^\sharp}^{\mathcal L, \mathrm{flat}}(\beta^\sharp)
&=
\sum_{a,i}\sum_{b,j}
(\alpha^\sharp)^{bj}
\frac{\partial}{\partial q^{bj}}(\beta^\sharp)^{ai}\,\partial_{ai}
=
\sum_{a,i}\sum_{b,j}
(\alpha^\sharp)^{bj}
\Big(\frac{\partial}{\partial q^{bj}}\sum_cK(q^a-q^c)\beta_{ci}\Big)
\partial_{ai}
\\
&=
\sum_{a,i}\sum_{b,c,j}
(\alpha^\sharp)^{bj}\,
\partial_jK^{ac}\,(\delta^a_b-\delta^c_b)\,\beta_{ci}\,\partial_{ai}
=
\sum_{a,i}\sum_{b,j}
\big(
(\alpha^\sharp)^{aj}-
(\alpha^\sharp)^{bj}
\big)\partial_j
K^{ab}
\,\beta_{bi}\,\partial_{ai}
\\
&=
\sum_{a,i}
\sum_b
\big\langle
(\alpha^\sharp)^a
-
(\alpha^\sharp)^b,
\nabla K^{ab}
\big\rangle_{\mathbb{R}^D}
\beta_{bi}\,\partial_{ai}
=
\sum_{a,i}
\Big(
\sum_b
C^{ab}(\alpha)\beta_{bi}
\Big)
\partial_{ai}.
\end{align*}
This is a vector in $T_q\mathcal L$ which we define to be the {\it landmark derivative} of $\beta^\sharp$ with respect to $\alpha^\sharp$.
The coefficients with respect to~$\partial_{a1},\ldots,\partial_{aD}$
(for fixed~$a$) are the elements of the following vector:
%
\begin{align} \label{landmarkderdef} 
D^a(\alpha,\beta) &:= \sum _{b=1}^N C^{ab}(\alpha)\beta_b 
= \sum_{b,c=1}^N (K^{ac}-K^{bc})\langle \alpha_c, \nabla K^{ab} \rangle_{\mathbb{R}^D} \beta_b\,,
\qquad a=1,\ldots,N.
\end{align}
We have that~$D(\alpha,\beta)=(D^a(\alpha,\beta))_{a=1}^N$
is the~$ND$-dimensional vector of the coefficients of~$\nabla_{\alpha^\sharp}^{\mathcal L, \mathrm{flat}}(\beta^\sharp)$ with respect to
the basis~$\{\partial_{ai}\}$ of $T_q\mathcal{L}$.
In particular, the coefficients of the Lie bracket of $\alpha^\sharp$ and $\beta^\sharp$ as vector fields on $\mathcal L$ are given by $D(\alpha,\beta) - D(\beta,\alpha)$.

\subsection{General formula for the sectional curvature of~$\mathcal{L}^N(\mathbb{R}^D)$}
We can write sectional curvature of~$\mathcal{L}^N(\mathbb{R}^D)$ 
in the following way, where we have split it in the terms introduced 
by~\eqref{split_1}--\eqref{split_4}. 
\par
{\bf Notation:} from now on~$\langle\;\;,\;\:\rangle$ will indicate the  dot product in~$\mathbb{R}^D$,
while~$\langle\;\;,\;\:\rangle_{T\mathcal{L}}$
and~$\langle\;\;,\;\:\rangle_{T^\ast\mathcal{L}}$
will be the inner products in the tangent and cotangent 
bundles of~$\mathcal{L}=\mathcal{L}^N(\mathbb{R}^D)$,
respectively.
\begin{theorem}
\label{ThCurvLM}
The numerator of sectional curvature 
of~$\mathcal{L}^N(\mathbb{R}^D)$, for an arbitrary pair of cotangent vectors~$\alpha$ and
$\beta$, is given
by~$R(\alpha^\sharp,\beta^\sharp,\beta^\sharp,\alpha^\sharp)=\sum_{i=1}^4R_i$, with:
\begin{align}
\label{gen_R1}
R_1 & = \tfrac{1}{2} \sum_{a\not=b} \big( \alpha_a \otimes S^{ab}(\beta) 
- \beta_a \otimes S^{ab}(\alpha) \big) ^T\big( \mathbb{I}_D \otimes D^2K^{ab} \big) 
\big( \alpha_b \otimes S^{ab}(\beta) - \beta_b \otimes S^{ab}(\alpha) \big),
\\
\label{gen_R2}
R_2 & = 
\sum_a \Big(
\big \langle D^a(\alpha,\alpha), F_a(\beta,\beta) \big \rangle
+ \big \langle D^a(\beta,\beta), F_a(\alpha,\alpha) \big \rangle
- \big \langle D^a(\alpha,\beta) + D^a(\beta,\alpha), F_a(\alpha,\beta) \big \rangle \Big),
\\
\nonumber
R_3 &= \big\|F(\alpha,\beta)\big\|^2_{T^\ast\mathcal L} - \big\langle F(\alpha,\alpha), F(\beta,\beta) \big\rangle_{T^\ast\mathcal{L}} \\
\label{gen_R3}
& = \sum_{ac} K^{ac} \Big(
\big \langle F_a(\alpha,\beta) , F_c(\alpha,\beta) \rangle
- \big \langle F_a(\alpha,\alpha), F_c(\beta,\beta) \big \rangle
\Big),
\\
\label{gen_R4}
R_4 & = -\tfrac{3}{4}\big\|[\alpha^\sharp,\beta^\sharp]_{\mathcal L}\big\|^2_{T\mathcal L} = -\tfrac{3}{4}\big\|D(\alpha,\beta)-D(\beta,\alpha)\big\|^2_{\mathbf{ K}^{-1}}.
\end{align}
\end{theorem}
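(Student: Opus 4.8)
The plan is to specialize Mario's formula to landmark space, taking $X=\alpha^\sharp$ and $Y=\beta^\sharp$ so that $X^\flat=\alpha$ and $Y^\flat=\beta$, and then to match each of the four building blocks $R_1,\dots,R_4$ of the boxed formula \eqref{split_1}--\eqref{split_4} with the force, strain, and landmark-derivative quantities just introduced. The prerequisite is the explicit first and second partial derivatives of the cometric $g^{(ai)(bj)}(q)=K^{ab}\delta^{ij}$. Differentiating by the chain rule exactly as in \eqref{der_K} gives $g^{(ai)(bj)}_{,(ck)}=\partial_k K^{ab}(\delta^a_c-\delta^b_c)\delta^{ij}$ and $g^{(ai)(bj)}_{,(ck)(d\ell)}=\partial^2_{k\ell}K^{ab}(\delta^a_c-\delta^b_c)(\delta^a_d-\delta^b_d)\delta^{ij}$, which exhibits the advertised sparsity: only the gradient $\nabla K^{ab}$ and the Hessian $D^2K^{ab}$ survive, each weighted by differences of Kronecker deltas in the landmark labels.

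Two of the four terms follow almost at once from the coordinate-free version of the boxed theorem. For $R_4$ I would use $R_4=-\tfrac34\,g([X,Y],[X,Y])$ together with the computation preceding \eqref{landmarkderdef}, which identifies the coefficients of $[\alpha^\sharp,\beta^\sharp]_{\mathcal L}$ as $D(\alpha,\beta)-D(\beta,\alpha)$; since the metric on $T\mathcal L$ is block-diagonal with blocks $\kermat^{-1}$, this produces \eqref{gen_R4} directly. For $R_3$ I would invoke the identity $d\,g^{-1}_q(\alpha,\beta)=2F(\alpha,\beta)$ established in \eqref{R3coordfree}, so that $d\|\alpha\|^2=2F(\alpha,\alpha)$ and $d\|\beta\|^2=2F(\beta,\beta)$; substituting into the second set of braces of Mario's formula and clearing the factors of two gives $R_3=\|F(\alpha,\beta)\|^2_{T^\ast\mathcal L}-\langle F(\alpha,\alpha),F(\beta,\beta)\rangle_{T^\ast\mathcal L}$, and then expanding the cotangent inner product, whose matrix is the cometric with entries $K^{ac}$, yields the second line of \eqref{gen_R3}.

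The substantive work is $R_1$ and $R_2$, which I would handle from their coordinate definitions \eqref{split_1} and \eqref{split_2}. Contracting the lifted indices against the covectors first rewrites $R_1$ as $\tfrac12(X_uY^\xi-Y_uX^\xi)\,g^{su}_{,\xi\eta}\,(X_sY^\eta-Y_sX^\eta)$; inserting the second-derivative expression above, the factors $(\delta^a_c-\delta^b_c)$ contracted against $(\beta^\sharp)^{ck}$ collapse into the difference $(\beta^\sharp)^b-(\beta^\sharp)^a=-S_{ab}(\beta)^k$, and similarly for $\alpha$, while the $\delta^{ij}$ supplies the $\mathbb{I}_D$ factor. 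After antisymmetrizing in $\alpha,\beta$ the diagonal $a=b$ drops out (there $S_{aa}=0$), leaving the bilinear pairing of $\alpha_a\otimes S_{ab}(\beta)-\beta_a\otimes S_{ab}(\alpha)$ against $\mathbb{I}_D\otimes D^2K^{ab}$ summed over $a\neq b$, which is \eqref{gen_R1}. The same mechanism applied to $R_2$, now using the first-derivative expression for both copies $g^{us}_{,\rho}$ and $g^{\rho r}_{,\tau}$, produces one factor of force $F_a$ (carrying one $\nabla K^{ab}$) and one factor of landmark derivative $D^a$ (carrying the compression $C_{ab}$, built from strain contracted with $\nabla K^{ab}$), regrouping into \eqref{gen_R2}.

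The main obstacle I anticipate is precisely the index bookkeeping in $R_1$ and $R_2$: tracking which Kronecker-delta difference attaches to which copy of $\alpha^\sharp$ or $\beta^\sharp$, and verifying that the fourfold antisymmetrized sum regroups \emph{exactly} into the stated strain/force/derivative contractions with no stray cross terms. For $R_1$ one must confirm that the $\xi$ and $\eta$ contractions feed the \emph{same} landmark pair $(a,b)$ fixed by $D^2K^{ab}$, so that the double-strain structure is genuinely diagonal in $(a,b)$; for $R_2$ one must correctly separate the contribution in which the derivative falls on the ``inner'' cometric (producing $F$) from the one producing the compression $C$ (producing $D$), matching both the signs and the symmetrization displayed in \eqref{gen_R2}. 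As a consistency check I would verify that $R_1+R_2$ reproduces the first set of braces in the non-local form of Mario's formula under the substitutions $\|\alpha\|^2=\sum_{a,b}K^{ab}\langle\alpha_a,\alpha_b\rangle$ and the analogous expressions for $\|\beta\|^2$ and $g^{-1}(\alpha,\beta)$.
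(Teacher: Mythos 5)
Your proposal is correct, and on the substantive terms it coincides with the paper's own proof: you first derive the sparse derivative formulas of Lemma~\ref{le:MetDer}, then obtain $R_1$ and $R_2$ by contracting the coordinate expressions \eqref{split_1}--\eqref{split_2}, with the Kronecker-delta differences collapsing against lifted vectors into the strains, forces and compressions --- exactly the paper's computation. Where you genuinely diverge is $R_3$ and $R_4$: the paper's primary derivations of these are again coordinate computations (for $R_3$ a four-term expansion regrouped into the $F$-pairings, for $R_4$ an auxiliary matrix $H_{ia}=\sum_b\big[C_{ab}(\beta)\alpha_{bi}-C_{ab}(\alpha)\beta_{bi}\big]$ with $R_4=-\tfrac34\|H\|^2_{\mathbf{K}^{-1}}$), whereas you read both off the non-local boxed version of Mario's formula, via $d\,g^{-1}_q(\alpha,\beta)=2F(\alpha,\beta)$ from \eqref{R3coordfree} and the identification of the bracket coefficients with $D(\alpha,\beta)-D(\beta,\alpha)$ from the discussion around \eqref{landmarkderdef}. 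The paper explicitly notes each of these as an ``alternative'' derivation, so your route is sound; it is shorter and more conceptual, at the price of leaning on the non-local form of Mario's formula, which requires $\alpha$ and $\beta$ to be extended as constant 1-forms --- a hypothesis you respect, since \eqref{R3coordfree} and the Lie-bracket computation are established under exactly that extension. One bookkeeping caution for the step you yourself flag as delicate: $(\delta^a_c-\delta^b_c)(\beta^\sharp)^{ck}=(\beta^\sharp)^{ak}-(\beta^\sharp)^{bk}=+S_{ab}(\beta)^k$, not $-S_{ab}(\beta)^k$ as you wrote; in $R_1$ the slip occurs in both factors and cancels, but in $R_2$, where each copy of the cometric carries a single derivative, the signs (together with relabelings using $\nabla K^{ab}=-\nabla K^{ba}$) must be tracked exactly to arrive at the symmetrized combination $D^a(\alpha,\beta)+D^a(\beta,\alpha)$ in \eqref{gen_R2}. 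Your closing consistency check of $R_1+R_2$ against the first brace of the non-local formula is precisely the identification asserted in the paper's statement of Mario's theorem.
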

In the formula  we have used  the
 definition: 
$
(v_1\otimes v_2)^T(M_1\otimes M_2) (w_1\otimes w_2) := (v^T_1M_1w_1) (v^T_2M_2w_2)
$ for the first term $R_1$,
while  we have used the norm for~$D\times N$ matrices
$ \|J\|_A^2 := \sum_{i=1}^D \sum_{a,b=1}^N J_{ia}J_{ib}A_{ab}$ for the fourth term $R_4$.
\par
The theorem is proven by applying Mario's formula to the cometric of the manifolds of landmarks. 
One needs to compute the elements of the cometric and its derivatives in terms of the kernel and its derivatives~\eqref{K_ders}. In agreement with notation~\eqref{u_deriv_2} we will define (note that we will keep using Einstein's  summation convention wherever possible):
$$
g^{(ai)(bj),(d\ell)}   := \gul{(ai)(bj)}{(ck)} g^{(ck)(d\ell)}
\qquad
\mathrm{and}
\qquad
g^{(ai)(bj),(ck)(d\ell)}  :=  \gul{(ai)(bj)}{(\mu\rho)(\xi\sigma)} g^{(\mu\rho)(ck)}g^{(\xi\sigma)(d\ell)}.
$$
\begin{lemma}
\label{le:MetDer}
It is the case that
\begin{align}
\label{diff_1}
\gul{(ai)(bj)}{(ck)} & =  \partial_k K^{ab}\;(\delta^a_c-\delta^b_c)\,\delta^{ij},
\\
\label{diff_1b}
\gul{(ai)(bj)}{(ck)(d\ell)}  & =   \partial^2_{k \ell} K^{ab}\;(\delta^a_c-\delta^b_c)
\, (\delta^a_d-\delta^b_d)\,\delta^{ij},
\\
\label{diff_2}
g^{(ai)(bj),(d\ell)} & = \partial_\ell K^{ab}\;(K^{ad}-K^{bd})\,\delta^{ij}, 
\\
\label{diff_2b}
g^{(ai)(bj),(ck)(d\ell)} & =  \partial^2_{k\ell} K^{ab} \; (K^{ac}-K^{bc})\, (K^{ad}-K^{bd})\, \delta^{ij}. 
\end{align}
\end{lemma}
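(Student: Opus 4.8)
The plan is to prove all four identities by direct computation from the explicit form of the cometric, $g^{(ai)(bj)} = K(q^a - q^b)\,\delta^{ij} = K^{ab}\delta^{ij}$ (valid since we have set $\lambda = \infty$), using only the chain rule already recorded in~\eqref{der_K} together with the two contraction conventions~\eqref{u_deriv_2}. The two ``lower'' derivatives~\eqref{diff_1}--\eqref{diff_1b} amount to differentiating the scalar $K^{ab}$ in the landmark positions, while the two mixed quantities~\eqref{diff_2}--\eqref{diff_2b} are then obtained by contracting the former against copies of the cometric.

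First I would establish~\eqref{diff_1}. Since the factor $\delta^{ij}$ is independent of $q$, differentiating $g^{(ai)(bj)} = K(q^a - q^b)\,\delta^{ij}$ in the coordinate $q^{ck}$ reduces to applying~\eqref{der_K} with arguments $q^a - q^b$, which yields $\partial_k K^{ab}(\delta^a_c - \delta^b_c)$ and hence~\eqref{diff_1}. For the second derivative~\eqref{diff_1b} I would differentiate~\eqref{diff_1} once more, in $q^{d\ell}$; the prefactor $(\delta^a_c - \delta^b_c)\,\delta^{ij}$ is constant in $q$, so only $\partial_k K^{ab}$ is affected, and a second application of~\eqref{der_K} produces the factor $\partial^2_{k\ell} K^{ab}(\delta^a_d - \delta^b_d)$, giving~\eqref{diff_1b}.

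Next I would compute the mixed quantities from the conventions~\eqref{u_deriv_2}. For~\eqref{diff_2} I substitute~\eqref{diff_1} together with $g^{(ck)(d\ell)} = K^{cd}\delta^{k\ell}$ into $g^{(ai)(bj),(d\ell)} = \gul{(ai)(bj)}{(ck)} g^{(ck)(d\ell)}$ and contract: the sum over the space index $k$ collapses $\partial_k K^{ab}\,\delta^{k\ell}$ to $\partial_\ell K^{ab}$, while the sum over the landmark index $c$ collapses $(\delta^a_c - \delta^b_c)K^{cd}$ to $K^{ad} - K^{bd}$, which is exactly~\eqref{diff_2}. Identity~\eqref{diff_2b} is entirely analogous: substituting~\eqref{diff_1b} and two copies of the cometric into $g^{(ai)(bj),(ck)(d\ell)} = \gul{(ai)(bj)}{(\mu\rho)(\xi\sigma)} g^{(\mu\rho)(ck)} g^{(\xi\sigma)(d\ell)}$ and contracting the two pairs of indices turns $(\delta^a_\mu - \delta^b_\mu)$ and $(\delta^a_\xi - \delta^b_\xi)$ into $(K^{ac} - K^{bc})$ and $(K^{ad} - K^{bd})$ respectively, while the $\delta$'s on the space indices reduce the Hessian to $\partial^2_{k\ell} K^{ab}$.

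The computation carries no analytic difficulty; the one thing that requires care is the index bookkeeping. One must keep the block-diagonal Kronecker delta $\delta^{ij}$ (which simply rides along untouched) strictly separate from the Kronecker deltas $\delta^a_c - \delta^b_c$ produced by the chain rule, and check that each contraction with a copy of the cometric $K^{cd}\delta^{k\ell}$ acts on the landmark index through $K^{cd}$ and on the space index through $\delta^{k\ell}$ independently. Once these conventions are fixed, all four identities follow immediately.
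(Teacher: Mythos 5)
Your proposal is correct and follows essentially the same route as the paper's own proof: differentiate the explicit cometric $g^{(ai)(bj)}=K^{ab}\delta^{ij}$ via the chain-rule identity~\eqref{der_K} to get~\eqref{diff_1} and~\eqref{diff_1b}, then contract against copies of the cometric (collapsing the landmark deltas to differences of kernel values and the space deltas onto the derivative indices) to obtain~\eqref{diff_2} and~\eqref{diff_2b}. No gaps; the index bookkeeping you flag is exactly what the paper's computation does.
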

\begin{proof}
Since $g^{(ai)(bj)}=K^{ab}\delta^{ij}$
and also
$
\frac{\partial}{\partial q^{ck}} K(q^{a}-q^b)= \partial_kK^{ab}(\delta^a_c-\delta^b_c) $
by~\eqref{der_K},
equation~\eqref{diff_1} follows immediately. 
Similarly to~\eqref{der_K} one can prove that
$\frac{\partial}{\partial q^{d\ell}}\partial_kK(q^a-q^b) =\partial^2_{\ell k}K^{ab}\,(\delta_d^a-\delta_d^b)$,
whence:
$ \gul{(ai)(bj)}{(ck)(d\ell)}\! = \frac{\partial}{\partial q^{d\ell}} \, \gul{(ai)(bj)}{(ck)}\! =
\partial_{\ell k}^2K^{ab} \, (\delta^a_d-\delta^b_d) \, (\delta^a_c-\delta^b_c)\,\delta^{ij},
$
so~\eqref{diff_1b} holds too. Now, by expression~\eqref{diff_1}:
$$
g^{(ai)(bj),(d\ell)}  =  \gul{(ai)(bj)}{(ck)} g^{(ck)(d\ell)} =
\textstyle\sum_{ck} \partial_k K^{ab} \, (\delta^a_c-\delta^b_c)\,\delta^{ij} \, K^{cd} \,\delta^{k\ell}
=
\partial_\ell K^{ab}\;(K^{ad}-K^{bd})\,\delta^{ij}. 
$$
which is~\eqref{diff_2}. We can use~\eqref{diff_1b} to compute
$g^{(ai)(bj),(ck)(d\ell)} 
=  \gul{(ai)(bj)}{(\mu\rho)(\xi\sigma)} g^{(\mu\rho)(ck)}g^{(\xi\sigma)(d\ell)} $:
\begin{align*}
g^{(ai)(bj),(ck)(d\ell)}  & = \textstyle\sum_{\mu\rho\xi\sigma} \partial^2_{\rho\sigma} K^{ab} \,
(\delta^a_\mu-\delta^b_\mu) \, (\delta^a_\xi-\delta^b_\xi) \, \delta^{ij} \,
K^{\mu c} \, \delta^{\rho k} \, K^{\xi d} \, \delta^{\sigma \ell}
\\ & 
=  \partial^2_{k\ell} K^{ab} \; (K^{ac}-K^{bc})\, (K^{ad}-K^{bd})\, \delta^{ij}, 
\end{align*}
which completes the proof.
\end{proof} 
\begin{proof}[Proof of Theorem~\ref{ThCurvLM}]
We will compute terms~$R_1,\ldots,R_4$
introduced by formulae~\eqref{split_1}--\eqref{split_4}.
For simplicity, sometimes we will write
$D\alpha^\mathrm{hor}_a$
instead of~$D\alpha^{\mathrm{hor}}(q^a)$.
\\
\noindent{$\bullet$ \it Computation of~$R_1$.}
We have 
$R_1 = \frac{1}{2} ( \alpha_{au} \beta_{cr} - \beta_{au} \alpha_{cr} )\, g^{(au)(bs),(cr)(dv)}
(\alpha_{bs} \beta_{dv} - \beta_{bs} \alpha_{dv} )$.
Inserting expression~\eqref{diff_2b}
into such formula yields:
$$
2R_1 = \textstyle\sum_{\mathrm{all\; indices}} ( \alpha_{au} \beta_{cr}
- \beta_{au} \alpha_{cr} \big) \,
\partial^2_{rv} K^{ab} \, (K^{ac}-K^{bc})\, (K^{ad}-K^{bd}) \, \delta^{us} \, \big(\alpha_{bs} \beta_{dv}
- \beta_{bs} \alpha_{dv} \big).
$$
Performing the above multiplications gives rise to four terms, 
which we will now compute one by one. First of all we have:
\begin{align*}
2R_{1,1} & := \textstyle\sum_{\mathrm{all\;indices}} \alpha_{au}\beta_{cr}\alpha_{bs}\beta_{dv}
\, \partial^2_{rv} K^{ab} \, (K^{ac}-K^{bc})\, (K^{ad}-K^{bd}) \, \delta^{us}
\\
& = \textstyle\sum_{abrv} \big[ {\textstyle \sum_{us}} \alpha_{au}\delta^{us}\alpha_{bs} \big]
\big[ {\textstyle \sum_{c}} (K^{ac}-K^{bc}) \beta_{cr} \big]
\,\partial^2_{rv} K^{ab}\,
\big[ {\textstyle \sum_{d}} (K^{ad}-K^{bd}) \beta_{dv} \big]
\\
& = \textstyle\sum_{ab} \alpha_a^T \alpha_b \sum_{rv} 
S^{ab}(\beta)^r
\,
\partial^2_{rv} K^{ab}
\,
S^{ab}(\beta)^v
= \sum_{ab} \alpha_a^T \alpha_b \, \big( S^{ab}(\beta) \big)^T D^2 K^{ab} \,
S^{ab}(\beta);
\\
&= \textstyle\sum_{ab} \big(\alpha_a \otimes S^{ab}(\beta)\big)^T \big(\mathbb{I}_D\otimes D^2 K^{ab} \big)
\big(\alpha_b \otimes S^{ab}(\beta)\big),
\end{align*}
where, once again, the superscript $^T$ indicates the transpose of a vector; similarly,
\begin{align*}
\qquad
2R_{1,2} & :=
-\textstyle\sum_{\mathrm{all}}
\alpha_{au}\beta_{cr}\beta_{bs}\alpha_{dv}\, \partial^2_{rv} K^{ab}\, (K^{ac}-K^{bc})\, (K^{ad}-K^{bd})\, \delta^{us} \\
& =
-\textstyle\sum_{ab} \big(\alpha_a \otimes S^{ab}(\beta)\big)^T \big(\mathbb{I}_D\otimes D^2 K^{ab} \big)
\big(\beta_b \otimes S^{ab}(\alpha)\big),\\
2R_{1,3} & := -\textstyle\sum_{\mathrm{all}}
\beta_{au}\alpha_{cr}\alpha_{bs}\beta_{dv}\, \partial^2_{rv} K^{ab}\, (K^{ac}-K^{bc})\,
(K^{ad}-K^{bd})\, \delta^{us}\\ 
&= -\textstyle\sum_{ab} \big(\beta_a \otimes S^{ab}(\alpha)\big)^T \big(\mathbb{I}_D\otimes D^2 K^{ab} \big)
\big(\alpha_b \otimes S^{ab}(\beta)\big),\\
2R_{1,4} & := \textstyle\sum_{\mathrm{all}}
\beta_{au}\alpha_{cr}\beta_{bs}\alpha_{dv}\, \partial^2_{rv} K^{ab}\, (K^{ac}-K^{bc})\, (K^{ad}-K^{bd})\, \delta^{us}
\\
& = \textstyle\sum_{ab} \big(\beta_a \otimes S^{ab}(\alpha)\big)^T \big(\mathbb{I}_D\otimes D^2 K^{ab} \big)
\big(\beta_b \otimes S^{ab}(\alpha)\big).
\end{align*}
%
Now we can take the summation~$R_1=\sum_{i=1}^4 R_{1,i}$,
which yields precisely expression~\eqref{gen_R1}.
\\
\noindent{$\bullet$ \it Computation of~$R_2$.}
We may combine equations~\eqref{diff_1} and~\eqref{diff_2} 
from Lemma~\ref{le:MetDer} to get:
\begin{align}
\nonumber
\gul{(au)(bs)}{(\lambda\rho)} g^{(\lambda\rho)(cr),(dv)} & =
\textstyle\sum_{\lambda\rho} \partial_\rho K^{ab} \, (\delta^a_\lambda-\delta^b_\lambda)\,\delta^{us} \,
\partial_v K^{\lambda c}\;(K^{\lambda d}-K^{cd})\,\delta^{\rho r} 
\\ &
= \partial_r K^{ab} \big[ \partial_v K^{ac} (K^{ad}-K^{cd}) - \partial_v K^{bc} (K^{bd}-K^{cd}) \big]\delta^{us}.
\label{diff_3}
\end{align}
Inserting~\eqref{diff_3} into
$ 2R_2 =
( \alpha_{au} \beta_{cr} - \beta_{au} \alpha_{cr} )\,
\gul{(au)(bs)}{(\lambda\rho)} g^{(\lambda\rho)(cr),(dv)} (\alpha_{bs} \beta_{dv} - \beta_{bs} \alpha_{dv} )$ yields:
\begin{align*}
2R_2 = \textstyle\sum_{\mathrm{all\; indices}} \big\{ &
\quad
\alpha_{au}\beta_{cr}\alpha_{bs}\beta_{dv} \; \partial_r K^{ab} 
\big[ \partial_v K^{ac} (K^{ad}-K^{cd}) - \partial_v K^{bc} (K^{bd}-K^{cd}) \big]\delta^{us}
\\
&
- \alpha_{au}\beta_{cr}\beta_{bs}\alpha_{dv} \; \partial_r K^{ab} \big[ \partial_v K^{ac} (K^{ad}-K^{cd}) -
\partial_v K^{bc} (K^{bd}-K^{cd}) \big]\delta^{us}
\\
&
- \beta_{au}\alpha_{cr}\alpha_{bs}\beta_{dv} \; \partial_r K^{ab} \big[ \partial_v K^{ac} (K^{ad}-K^{cd}) -
\partial_v K^{bc} (K^{bd}-K^{cd}) \big]\delta^{us}
\\ &
+ \beta_{au}\alpha_{cr}\beta_{bs}\alpha_{dv} \; \partial_r K^{ab} \big[ \partial_v K^{ac} (K^{ad}-K^{cd}) -
\partial_v K^{bc} (K^{bd}-K^{cd}) \big]\delta^{us} \big\},
\end{align*}
which immediately implies:
\begin{align*}
\hspace*{-2cm}
R_2& =
\\
\tag{$=: R_{2,1}$}
&\textstyle{\frac{1}{2}\sum_{abcd}}
\langle\alpha_a,\alpha_b\rangle \langle\beta_c,\nabla K^{ab} \rangle
\big[\langle\beta_d,\nabla K^{ac} \rangle(K^{ad}\!-\!K^{cd})
\!-\!\langle\beta_d,\nabla K^{bc} \rangle(K^{bd}\!-\!K^{cd})\big]
\\
\tag{$=: R_{2,2}$}
-&\textstyle{\frac{1}{2}\sum_{abcd}}
\langle\alpha_a,\beta_b\rangle \langle\beta_c,\nabla K^{ab} \rangle
\big[\langle\alpha_d,\nabla K^{ac} \rangle(K^{ad}\!-\!K^{cd})
\!-\!\langle\alpha_d,\nabla K^{bc} \rangle(K^{bd}\!-\!K^{cd})\big]
\\
\tag{$=: R_{2,3}$}
-&\textstyle{\frac{1}{2}\sum_{abcd}}
\langle\beta_a,\alpha_b\rangle \langle\alpha_c,\nabla K^{ab} \rangle
\big[\langle\beta_d,\nabla K^{ac} \rangle(K^{ad}\!-\!K^{cd})
\!-\langle\beta_d,\nabla K^{bc} \rangle(K^{bd}\!-\!K^{cd})\big]
\\
\tag{$=: R_{2,4}$}
+&\textstyle{\frac{1}{2}\sum_{abcd}}
\langle\beta_a,\beta_b\rangle \langle\alpha_c,\nabla K^{ab} \rangle
\big[\langle\alpha_d,\nabla K^{ac} \rangle(K^{ad}\!-\!K^{cd})
\!-\langle\alpha_d,\nabla K^{bc} \rangle(K^{bd}\!-\!K^{cd})\big]
\end{align*}
We will now manipulate terms
$R_{2,1}$,\ldots,$R_{2,4}$ one by one.
Since~$\nabla K^{ab}=-\nabla K^{ba}$, by relabeling the indices we have
\begin{align*}
R_{2,1} & =
\textstyle\sum_{abcd}
\langle\alpha_a,\alpha_b\rangle \langle\beta_c,\nabla K^{ab} \rangle
\langle\beta_d,\nabla K^{ac} \rangle(K^{ad}-K^{cd})
\\
&= \textstyle\sum_{abc}
\langle\alpha_a,\alpha_b\rangle \langle\beta_c,\nabla K^{ab} \rangle
\big\langle{\textstyle\sum_d}K^{ad}\beta_d-{\textstyle\sum_d}K^{cd}\beta_d,\nabla K^{ac} \big\rangle
\\
&= \textstyle\sum_{abc}
\langle\alpha_a,\alpha_b\rangle \langle\beta_c,\nabla K^{ab} \rangle
\langle S^{ac}(\beta),\nabla K^{ac} \rangle
= \textstyle\sum_{abc}
\langle\alpha_a,\alpha_b\rangle \langle\beta_c,\nabla K^{ab} \rangle C^{ac}(\beta)
\\
&= \textstyle\sum_{ab}
\langle\alpha_a,\alpha_b\rangle \big\langle {\textstyle \sum_c} C^{ac}(\beta)\beta_c,\nabla K^{ab} \big\rangle
=
\textstyle\sum_{ab}
\langle\alpha_a,\alpha_b\rangle \langle D^a(\beta,\beta),\nabla K^{ab} \rangle
\\ 
&= \textstyle \sum_{ab}
D^a(\beta,\beta)^T \nabla K^{ab} \alpha_b^T \alpha_a = 
\sum_{a} D^a(\beta,\beta)^T
D\alpha_a^{\text{hor}} \cdot \alpha_a = \sum_{a} \langle D^a(\beta,\beta) ,
F_a(\alpha,\alpha) \rangle.
\end{align*}
Similarly, $R_{2,4}=\sum_{a}
\langle D^a(\alpha,\alpha) , F_a(\beta,\beta) \rangle$. It is also the case that
\begin{align*}
R_{2,2} =&
\!-\! \textstyle{\frac{1}{2}\sum_{abc}}
\langle\alpha_a,\beta_b\rangle \langle\beta_c,\nabla K^{ab} \rangle
\big[ \big\langle{\textstyle\sum_d}(K^{ad}\!-\!K^{cd})\alpha_d,\nabla K^{ac}  \big\rangle
\!-\! \big\langle{\textstyle\sum_d}(K^{bd}\!-\!K^{cd})\alpha_d,\nabla K^{bc}  \big\rangle \big]
\\
= & \!-\! \textstyle{\frac{1}{2}\sum_{abc}} \langle\alpha_a,\beta_b\rangle
\langle\beta_c,\nabla K^{ab} \rangle
\big[ \langle S^{ac}(\alpha),\nabla K^{ac} \rangle - \langle S^{bc}(\alpha),\nabla K^{bc} \rangle \big]
\\
= & \!-\! \textstyle{\frac{1}{2}\sum_{abc}}
\langle\alpha_a,\beta_b\rangle \langle\beta_c,\nabla K^{ab} \rangle
\big[ C^{ac}(\alpha) - C^{bc}(\alpha) \big];
\end{align*}
relabeling the indices (and using the fact 
that~$\nabla K^{ab}=-\nabla K^{ba}$) yields:
\begin{align*}
R_{2,2} = & - \textstyle{\frac{1}{2}\sum_{abc}}
\big[ \langle\alpha_a,\beta_b\rangle + \langle\alpha_b,\beta_a\rangle \big]
\langle\beta_c,\nabla K^{ab} \rangle C^{ac}(\alpha)
\\
= 
& 
- \textstyle{\frac{1}{2}\sum_{ab}}
\big[ \langle\alpha_a,\beta_b\rangle + \langle\alpha_b,\beta_a\rangle \big]
\big\langle {\textstyle\sum_c} C^{ac}(\alpha) \beta_c,\nabla K^{ab} \big\rangle
\\
= & - \textstyle{\frac{1}{2}\sum_{ab}}
\big[ \langle\alpha_a,\beta_b\rangle + \langle\alpha_b,\beta_a\rangle \big]
\langle D^a(\alpha,\beta),\nabla K^{ab} \rangle
\\
= 
& 
- \textstyle{\frac{1}{2}\sum_{ab}}
D^a(\alpha,\beta)^T
\big[ \nabla K^{ab} \beta_b^T \alpha_a + \nabla K^{ab} \alpha_b^T \beta_a \big]
\\
= & - \textstyle{\frac{1}{2}\sum_{a}}
D^a(\alpha,\beta)^T \big[ D\beta_a^{\text{hor}}\cdot \alpha_a + D\alpha_a^{\text{hor}}\cdot \beta_a \big]
=
- \textstyle{\sum_{a}}
\langle D^a(\alpha,\beta) , F_a(\alpha,\beta) \rangle.
\end{align*}
Similarly,
$R_{2,3}=- \textstyle{\sum_{a}} \langle D^a(\beta,\alpha) , F_a(\beta,\alpha) \rangle $.
By the symmetry of~$F_a(\cdot,\cdot)$,
\begin{equation*}
R_{2,2} + R_{2,3} = - \textstyle{\sum_{a}} \langle D^a(\alpha,\beta) +D^a(\beta,\alpha)
, F_a(\alpha,\beta) \rangle
\end{equation*}
Adding the above sum to the expressions for~$R_{2,1}$
and~$R_{2,4}$ finally yields~\eqref{gen_R2}. 
\\
\noindent{$\bullet$ \it Computation of~$R_3$.}
We have
$$ R_3 = - \frac{1}{8}( \alpha_{au} \beta_{cr} - \beta_{au} \alpha_{cr} \big) \gul{(au)(bs)}{(\eta\sigma)}
g^{(cr)(dv),(\eta\sigma)} \big(\alpha_{bs} \beta_{dv} - \beta_{bs} \alpha_{dv} \big). $$
But by Lemma~\ref{le:MetDer},
\begin{align*}
\gul{(au)(bs)}{(\eta\sigma)} g^{(cr)(dv),(\eta\sigma)} & = \textstyle{ \sum_{\eta=1}^N \sum_{\sigma=1}^D}
\partial_\sigma K^{ab}\, (\delta^a_\eta-\delta^b_\eta) \delta^{us} \, \partial_\sigma K^{cd}\,
(K^{c\eta}-K^{d\eta}) \delta^{rv}
\\
& =  \langle \nabla K^{ab}, \nabla K^{cd} \rangle
\delta^{us} \delta^{rv} (K^{ac}-K^{ad}-K^{bc}+K^{bd}),
\end{align*}
whence:
\begin{align}
\nonumber
-8&R_3 =\textstyle\sum_{\mathrm{all}} \big\{\langle \nabla K^{ab}, \nabla K^{cd} \rangle
(K^{ac}-K^{ad}-K^{bc}+K^{bd})
\\
\nonumber
&\cdot\big(  \alpha_{au}\beta_{cr}\alpha_{bs}\beta_{dv}\delta^{us}\delta^{rv}
\!-\!\alpha_{au}\beta_{cr}\beta_{bs}\alpha_{dv}\delta^{us}\delta^{rv}
\!-\!\beta_{au}\alpha_{cr}\alpha_{bs}\beta_{dv}\delta^{us}\delta^{rv}
\!+\!\beta_{au}\alpha_{cr}\beta_{bs}\alpha_{dv}\delta^{us}\delta^{rv}
\big)\big\}
\\
=&\textstyle\sum_{abcd}
\big\{\big( \langle\alpha_a,\alpha_b\rangle \langle\beta_c,\beta_d\rangle
-\langle\alpha_a,\beta_b\rangle \langle\beta_c,\alpha_d\rangle
-\langle\beta_a,\alpha_b\rangle \langle\alpha_c,\beta_d\rangle
+\langle\beta_a,\beta_b\rangle \langle\alpha_c,\alpha_d\rangle
\big)
\nonumber
\\ \nonumber
& \cdot\langle \nabla K^{ab},\nabla K^{cd} \rangle
(K^{ac}-K^{ad}-K^{bc}+K^{bd})\big\}
\end{align}
Relabeling the indices in the above expression 
yields:
\begin{align*}
-&8R_3= \textstyle\sum_{abcd}
\big[
8\langle\alpha_a,\alpha_b\rangle \langle\beta_c,\beta_d\rangle
-2\langle\alpha_a,\beta_b\rangle \langle\beta_c,\alpha_d\rangle
-2\langle\beta_a,\alpha_b\rangle \langle\alpha_c,\beta_d\rangle
\\
&
\qquad\qquad\qquad\;-2
\langle\alpha_a,\beta_b\rangle \langle\beta_d,\alpha_c\rangle
-2
\langle\beta_a,\alpha_b\rangle \langle\alpha_d,\beta_c\rangle
\big]
\langle \nabla K^{ab}, \nabla K^{cd} \rangle K^{ac}
\\
=& \textstyle\sum_{abcd}
K^{ac}
\big[
8\alpha_a^T\alpha_b(\nabla K^{ab})^T\nabla K^{cd} \beta_d^T\beta_c
-2\alpha_a^T \beta_b (\nabla K^{ab})^T\nabla K^{cd} \alpha_d^T \beta_c
-2\beta_a^T \alpha_b (\nabla K^{ab})^T\nabla K^{cd}\beta_d^T\alpha_c
\\
&
\qquad\qquad\quad 
-2\alpha_a^T \beta_b (\nabla K^{ab})^T\nabla K^{cd} \beta_d^T
\alpha_c
-2 \beta_a^T \alpha_b (\nabla K^{ab})^T\nabla K^{cd}\alpha_d^T\beta_c
\big]
\\
= &\textstyle\sum_{ac}
K^{ac}
\big[
8\alpha_a^T (D\alpha^{\text{hor}}_a)^T (D\beta^{\text{hor}}_c)
\beta_c
-2 \alpha_a^T (D\beta^{\text{hor}}_a)^T (D\alpha^{\text{hor}}_c)
\beta_c
-2 \beta_a^T (D\alpha^{\text{hor}}_a)^T (D\beta^{\text{hor}}_c)
\alpha_c
\\
&
\qquad\qquad\quad
-2 \alpha_a^T (D\beta^{\text{hor}}_a)^T 
(D\beta^{\text{hor}}_c) \alpha_c
-2 \beta_a^T (D\alpha^{\text{hor}}_a)^T (
D\alpha^{\text{hor}}_c) \beta_c \big] \\
= &\textstyle\sum_{ac} K^{ac} \big[
8\langle D\alpha^{\text{hor}}_a \cdot \alpha_a, 
D\beta^{\text{hor}}_c \cdot \beta_c,\rangle
-2 \langle D\alpha^{\text{hor}}_a \cdot\beta_a
+ D\beta^{\text{hor}}_a \cdot \alpha_a,
D\alpha^{\text{hor}}_c \cdot \beta_c
+ D\beta^{\text{hor}}_c \cdot \alpha_c \rangle \big].\\
= &\textstyle\sum_{ac}
K^{ac}
\big[
8\langle F_a(\alpha,\alpha), F_c(\beta,\beta) \rangle
-8 \langle F_a(\alpha,\beta), F_c(\alpha,\beta) \rangle
\big],
\end{align*}
which is precisely~\eqref{gen_R3}. Alternatively, this can be derived from formula \eqref{R3coordfree}.
\\
\noindent{$\bullet$ \it Computation of~$R_4$.}
It is the case that:
\begin{equation*}
R_4 = -\frac{3}{4}
(\alpha_{au} \beta_{cr} - \beta_{au} \alpha_{cr} \big)
g^{ (\xi\lambda) (au),(cr)} g_{ (\xi\lambda)(\eta\mu)}\,
g^{ (\eta\mu)(bs),(dv)} \big(\alpha_{bs} \beta_{dv}
-\beta_{bs} \alpha_{dv} \big).
\end{equation*}
By Lemma~\ref{le:MetDer}:
\begin{align*}
\textstyle\sum&_{aucr}
(
\alpha_{au}
\beta_{cr}
-
\beta_{au}
\alpha_{cr}
\big)
g^{ (\xi\lambda) (au),(cr)}
=
\textstyle\sum_{aucr}
(
\alpha_{au}
\beta_{cr}
-
\beta_{au}
\alpha_{cr}
\big)
\partial_rK^{\xi a}
\,
(
K^{\xi c} 
- 
K^{a c}
)
\,
\delta^{\lambda u}
\\
& 
=
\textstyle\sum_{au}
\big\{
\alpha_{au}
\big[
{\textstyle \sum_r}
\partial_rK^{\xi a}
(
{\textstyle \sum_c}
K^{\xi c} 
\beta_{cr}
)
\big]
-
\alpha_{au}
\big[
{\textstyle \sum_r}
\,
\partial_rK^{\xi a}
(
{\textstyle \sum_c}
K^{a c} 
\beta_{cr}
)
\big]
\\
&
\qquad
\qquad
-
\beta_{au}
\big[
{\textstyle \sum_r}
\partial_rK^{\xi a}
(
{\textstyle \sum_c}
K^{\xi c} 
\alpha_{cr}
)
\big]
+
\beta_{au}
\big[
{\textstyle \sum_r}
\partial_rK^{\xi a}
(
{\textstyle \sum_c}
K^{a c} 
\alpha_{cr}
)
\big]
\big\}
\,
\delta^{\lambda u}
\\
&
=
\textstyle\sum_{au}
\big\{
\alpha_{au}
\big[
\langle
\nabla K^{\xi a}
,
\beta^{\text{hor}}_\xi
\rangle
-
\langle
\nabla K^{\xi a}
,
\beta^{\text{hor}}_a
\rangle
\big]
-
\beta_{au}
\big[
\langle
\nabla K^{\xi a}
,
\alpha^{\text{hor}}_\xi
\rangle
-
\langle
\nabla K^{\xi a}
,
\alpha^{\text{hor}}_a
\rangle
\big]
\big\}
\,
\delta^{\lambda u}
\\
&
=\textstyle\sum_{au}\big\{\alpha_{au}\langle\nabla K^{\xi a},
S^{\xi a}(\beta)\rangle
-\beta_{au}\langle\nabla K^{\xi a},
S^{\xi a}(\alpha) \rangle \big\}\,\delta^{\lambda u}
= \sum_{au}\big\{ C^{\xi a}(\beta)\alpha_{au}
-C^{\xi a}(\alpha)\beta_{au}\big\} \,\delta^{\lambda u}.
\end{align*}
So if we define the matrix
$ H_{ia} := \sum_b \big[ C^{ab}(\beta)\alpha_{bi}
- C^{ab}(\alpha)\beta_{bi} \big]$, $i=1,\ldots,D$,
$a=1,\ldots,N$ we have:
$$ R_4 = -\frac{3}{4} \sum_{us}\sum_{\xi\eta}\sum_{\lambda\mu}
H_{u\xi}\,\delta^{\lambda u}\,(\mathbf{ K}^{-1})_{\xi\eta} \,
\delta_{\lambda\mu} \,H_{s\eta}\,\delta^{\mu s}
=-\frac{3}{4} \sum_{u=1}^D \sum_{\xi,\eta=1}^N
H_{u\xi} \, H_{u\eta} \, (\mathbf{ K}^{-1})_{\xi\eta}
= -\frac{3}{4} \|H\|^2_{\mathbf{ K}^{-1}}.
$$
Alternatively, this can be derived from formula \eqref{landmarkderdef}.
\end{proof}
The denominator~\eqref{denom} of sectional curvature for~$\mathcal{L}^N(\mathbb{R}^D)$ is given by the simple formula:
\begin{proposition} 
\label{PrDenLan}
For any pair of cotangent vectors~$\alpha,\beta\in T_q^\ast \mathcal{L}$,
\begin{equation}
\label{den_land}
\|\alpha\|^2_{T^\ast \mathcal{L}}\,
\|\beta\|_{T^\ast \mathcal{L}}^2
-\langle \alpha,\beta \rangle_{T^\ast \mathcal{L}}^2
=\sum_{abcd}
K^{ab}K^{cd}
\big(\langle\alpha_a,\alpha_b\rangle
\langle\beta_c,\beta_d\rangle
-\langle\alpha_a,\beta_b\rangle
\langle\alpha_c,\beta_d\rangle\big).
\end{equation}
\end{proposition}
\begin{proof}
Using double-index notation we may write 
equation~\eqref{denom} as follows:
\begin{align*}
\|\alpha\|^2_{T^\ast \mathcal{L}}\,
\|\beta\|_{T^\ast \mathcal{L}}^2
-\langle \alpha,\beta \rangle_{T^\ast \mathcal{L}}^2
&= \alpha_{au}\,\alpha_{bs}\,\beta_{cr}\,\beta_{dv}
\big(g^{(au)(bs)}g^{(cr)(dv)}-g^{(au)(dv)}g^{(bs)(cr)}\big) \\
&= \textstyle\sum_{abcd} \alpha_{au}\,\alpha_{bs}\,\beta_{cr}\,\beta_{dv}\big(K^{ab}\delta^{us} K^{cd}\delta^{rv} 
- K^{ad}\delta^{uv} K^{bc}\delta^{sr} \big) \\
& =\textstyle\sum_{abcd} \langle \alpha_a,\alpha_b \rangle
\langle \beta_c, \beta_d \rangle K^{ab} K^{cd} -
\textstyle\sum_{abcd}\langle \alpha_a, \beta_d \rangle 
\langle \alpha_b, \beta_d \rangle K^{ad} K^{bc},
\end{align*}
and~\eqref{den_land} follows by relabeling the indices.
\end{proof}

\subsection{The rotationally invariant case}
Finally, suppose the Green's function $K$ is rotationally invariant,
i.e.~that (K4) holds: 
\begin{equation*}
K(x) = \gamma(\|x\|),\,x\in\mathbb{R}^D,
\qquad\mathrm{with}\; \gamma \in C^2
\big([0,\infty)\big).
\end{equation*}
We will use the convenient notation: $\gamma_0:=\gamma(0)$, $\gamma_{ab}:=\gamma(\|q^{a}-q^b\|)$, $\gamma'_{ab}:=\gamma'(\|q^{a}-q^b\|)$,  and
$\gamma''_{ab}:=\gamma''(\|q^{a}-q^b\|)$ for $a,b=1,\ldots,N$. Then we can evaluate the first and second derivatives of $K$:
\begin{lemma}
For rotationally invariant kernels, it is the case that:
\begin{align}\label{nabKr}
\nabla K(x) &= \gamma'(\|x\|)\,\frac{x}{\|x\|}\,, \\
\label{hessKr}
D^2K(x) &= \Big[ \gamma''(\|x\|)-\frac{\gamma'(\|x\|)}{\|x\|} \Big]
\frac{xx^T}{\|x\|^2} + \frac{\gamma'(\|x\|)}{\|x\|}\,\mathbb{I}_D\\
\notag &= \gamma''(\|x\|)\frac{xx^T}{\|x\|^2} + \frac{\gamma'(\|x\|)}{\|x\|}\,\mathrm{Pr}^\perp(x), \end{align}  
where $\mathbb{I}_D$ is the~$D\times D$ identity matrix and $\mathrm{Pr}^\perp(x):=\mathbb{I}_D-\frac{xx^T}{\|x\|^2}$ is projection  to the hyperplane 
of~$\mathbb{R}^D$ that is normal to $x$.
\end{lemma}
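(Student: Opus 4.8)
The plan is to differentiate $K(x)=\gamma(\|x\|)$ directly by the chain rule, working at points $x\neq 0$ where $\rho:=\|x\|=(\sum_{\ell}x_\ell^2)^{1/2}$ is smooth; the degenerate behavior at the origin is handled separately by the symmetry $K(x)=K(-x)$, which forces $\nabla K(0)=0$. Everything rests on the single elementary computation $\partial_i\rho = x_i/\rho$, coming from $\rho=(\sum_\ell x_\ell^2)^{1/2}$. First I would feed this into the chain rule to obtain $\partial_i K(x)=\gamma'(\rho)\,\partial_i\rho=\gamma'(\rho)\,x_i/\rho$, which is precisely~\eqref{nabKr} written componentwise and reassembled as a vector.

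For the Hessian I would differentiate the gradient a second time, applying the product rule to $\partial_i K=\gamma'(\rho)\,x_i/\rho$. This splits into two pieces: differentiating the scalar $\gamma'(\rho)$ produces $\gamma''(\rho)\,x_j/\rho$ (again by the chain rule, using $\partial_j\rho=x_j/\rho$), while differentiating the factor $x_i/\rho$ produces $\delta_{ij}/\rho - x_ix_j/\rho^3$ using $\partial_j\rho=x_j/\rho$ once more. Collecting the pieces,
\[
\partial^2_{ij}K(x)=\gamma''(\rho)\frac{x_ix_j}{\rho^2}+\frac{\gamma'(\rho)}{\rho}\Big(\delta_{ij}-\frac{x_ix_j}{\rho^2}\Big),
\]
and regrouping the two $x_ix_j/\rho^2$ contributions against the $\delta_{ij}$ contribution yields the first displayed form of~\eqref{hessKr} in matrix notation, with $xx^T/\rho^2$ and $\mathbb{I}_D$ playing the roles of $x_ix_j/\rho^2$ and $\delta_{ij}$ respectively.

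Finally, to pass to the second (projection) form I would simply substitute $\mathbb{I}_D=\mathrm{Pr}^\perp(x)+xx^T/\rho^2$ into the term multiplied by $\gamma'(\rho)/\rho$ and recombine: the two $xx^T/\rho^2$ contributions then add up to $\gamma''(\rho)\,xx^T/\rho^2$, leaving $(\gamma'(\rho)/\rho)\,\mathrm{Pr}^\perp(x)$, exactly as claimed. There is essentially no obstacle here beyond careful bookkeeping of the scalar factors; the only point deserving a word of care is the origin, where the individual terms $x_ix_j/\rho^2$ and $1/\rho$ are singular even though $D^2K$ extends continuously (indeed $\gamma'(0)=0$ is forced by $C^1$-ness of $K$, and then $D^2K(0)=\gamma''(0)\mathbb{I}_D$), so the two displayed expressions are to be read for $x\neq 0$.
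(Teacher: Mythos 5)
Your proposal is correct and follows essentially the same route as the paper: the chain rule with $\partial_i\|x\| = x_i/\|x\|$ gives the gradient, the product rule applied to $\gamma'(\|x\|)x_i/\|x\|$ gives the Hessian, and regrouping the $x_ix_j/\|x\|^2$ terms yields both displayed forms. Your additional remarks about the behavior at the origin (that $\gamma'(0)=0$ is forced and the formulas are read for $x\neq 0$) are a careful extra that the paper leaves implicit, but do not change the argument.
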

\begin{proof}
We have that $\partial_iK(x) =\gamma'(\|x\|)\frac{x^i}{\|x\|}$ and \eqref{nabKr} follows immediately. Also,
\begin{align*}
\partial_j\partial_i
K(x) &= 
\textstyle \frac{x^i}{\|x\|} \frac{\partial}{\partial x^j} \gamma'(\|x\|)
+ \gamma'(\|x\|) \frac{1}{\|x\|} \frac{\partial}{\partial x^j} x^i
+ \gamma'(\|x\|)x^i \frac{\partial}{\partial x^j} \frac{1}{\|x\|} \\
& =
\textstyle \gamma''(\|x\|)\frac{x^ix^j}{\|x\|^2}
+ \frac{\gamma'(\|x\|)}{\|x\|} \delta^{ij}
- \gamma'(\|x\|) \frac{x^ix^j}{\|x\|^3}
= \big[ \gamma''(\|x\|)
-\frac{\gamma'(\|x\|)}{\|x\|} \big] \frac{x^ix^j}{\|x\|^2}
+ \frac{\gamma'(\|x\|)}{\|x\|} \delta^{ij},
\end{align*}
which implies~\eqref{hessKr}.
\end{proof}
Because of \eqref{nabKr}, in the rotationally invariant case, the ``scalar compression'' $C^{ab}(\alpha)$ really does measure a multiple compression of the flow $\alpha^\sharp$ between $q^a$ and $q^b$. We can decompose the vector strain $S^{ab}(\alpha)$ into the part parallel to the vector $q^a-q^b$ and the part perpendicular to this: let
$u^{ab}:=\frac{q^a-q^b}{\|q^a-q^b\|}$ and define
\begin{equation}
\label{S_decomp}
S^{ab}(\alpha)^{\parallel} := \big
\langle S^{ab}(\alpha), u^{ab} \big\rangle, \qquad 
S^{ab}(\alpha)^\perp := S^{ab}(\alpha) - S^{ab}(\alpha)^\parallel\,u^{ab}.
\end{equation}
Note that $S^{ab}(\alpha)^\parallel$ is a \em scalar \em while~$S^{ab}(\alpha)^\perp$ is a {\it vector\/}. In particular it is the case that $C^{ab}(\alpha) = \gamma'_{ab}\cdot S^{ab}(\alpha)^\parallel$. Moreover, formula \eqref{hessKr} allows us to simplify the first term $R_1$ in the curvature formula. Substituting \eqref{hessKr} into \eqref{gen_R1}, we get the rotationally invariant case for $R_1$:
\begin{proposition}
In the rotationally invariant case~{\rm(K4)} we have that
\begin{align} \label {R1rotinv}
R_1 =& \sum_{a \ne b}\Big( \frac{\gamma''_{ab}}{2} \big\langle S^{ab}(\alpha)^\parallel\, \beta_a - 
S^{ab}(\beta)^\parallel\, 
\alpha_a, S^{ab}(\alpha)^\parallel\, \beta_b - S^{ab}(\beta)^\parallel\, \alpha_b \big\rangle  
\\
\notag & 
\qquad\;+ \frac{\gamma'_{ab}}{2\|q^a-q^b\|} \big\langle S^{ab}(\alpha)^\perp
\otimes \beta_a - 
S^{ab}(\beta)^\perp\otimes \alpha_a, S^{ab}(\alpha)^\perp \otimes \beta_b - S^{ab}(\beta)^\perp \otimes\alpha_b \big\rangle \Big).
\end{align}
\end{proposition}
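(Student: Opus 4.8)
The plan is to start directly from the general expression \eqref{gen_R1} for $R_1$ and substitute the rotationally invariant Hessian \eqref{hessKr}. Writing $x=q^a-q^b$ and $u_{ab}=x/\|x\|$, the Hessian decomposes as
$$D^2K^{ab} = \gamma''_{ab}\,u_{ab}u_{ab}^T + \frac{\gamma'_{ab}}{\|q^a-q^b\|}\,\mathrm{Pr}^\perp(q^a-q^b),$$
a rank-one ``parallel'' piece plus a ``perpendicular'' orthogonal projection. Since the matrix acting in \eqref{gen_R1} is $\mathbb{I}_D\otimes D^2K^{ab}$, the identity acts only on the momentum slot of each tensor factor while $D^2K^{ab}$ acts only on the strain slot; by linearity the whole bilinear form therefore splits into a contribution carrying $\gamma''_{ab}$ and one carrying $\gamma'_{ab}/\|q^a-q^b\|$. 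My expectation is that these two contributions reassemble into the two terms of \eqref{R1rotinv}.

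First I would expand the quadratic form in \eqref{gen_R1} into its four summands by bilinearity, using the defining rule $(v_1\otimes v_2)^T(M_1\otimes M_2)(w_1\otimes w_2)=(v_1^TM_1w_1)(v_2^TM_2w_2)$. Each summand then factors as a momentum inner product drawn from the $\mathbb{I}_D$ slot (one of $\langle\alpha_a,\alpha_b\rangle$, $\langle\alpha_a,\beta_b\rangle$, $\langle\beta_a,\alpha_b\rangle$, $\langle\beta_a,\beta_b\rangle$) times a strain quadratic form $S_{ab}(\cdot)^TD^2K^{ab}S_{ab}(\cdot)$. For the parallel piece I would use $S_{ab}(\cdot)^Tu_{ab}u_{ab}^TS_{ab}(\cdot)=S_{ab}^\parallel(\cdot)\,S_{ab}^\parallel(\cdot)$, which is immediate from the definition \eqref{S_decomp}; for the perpendicular piece, since $\mathrm{Pr}^\perp$ is an orthogonal projection, $S_{ab}(\cdot)^T\mathrm{Pr}^\perp S_{ab}(\cdot)=\langle S_{ab}^\perp(\cdot),S_{ab}^\perp(\cdot)\rangle$.

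It then remains to recombine the four scalars in each family into a single inner product of differences. For the parallel family the four terms are
$$\tfrac{\gamma''_{ab}}{2}\big[\langle\alpha_a,\alpha_b\rangle (S_{ab}^\parallel(\beta))^2 - \langle\alpha_a,\beta_b\rangle S_{ab}^\parallel(\beta)S_{ab}^\parallel(\alpha) - \langle\beta_a,\alpha_b\rangle S_{ab}^\parallel(\alpha)S_{ab}^\parallel(\beta) + \langle\beta_a,\beta_b\rangle (S_{ab}^\parallel(\alpha))^2\big],$$
and the key identity to verify is that this is precisely the bilinear expansion of $\tfrac{\gamma''_{ab}}{2}\langle S_{ab}^\parallel(\alpha)\beta_a - S_{ab}^\parallel(\beta)\alpha_a,\, S_{ab}^\parallel(\alpha)\beta_b - S_{ab}^\parallel(\beta)\alpha_b\rangle$. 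The identical bookkeeping, now invoking $\langle v_1\otimes v_2,w_1\otimes w_2\rangle=\langle v_1,w_1\rangle\langle v_2,w_2\rangle$, collapses the perpendicular family into the stated $\gamma'_{ab}$-term. Summing over $a\neq b$ then produces \eqref{R1rotinv}.

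The computation is essentially routine once the Hessian is split, so I do not expect a genuine obstacle, only the need for careful bookkeeping. The one subtlety worth flagging is tracking which tensor factor each matrix acts on — keeping the momenta $\alpha_a,\beta_a$ in the $\mathbb{I}_D$ slot and the strains $S_{ab}$ in the $D^2K^{ab}$ slot — together with the observation that the final perpendicular term is written with the strain vector $S_{ab}^\perp$ occupying the \emph{first} tensor slot. This reordering is harmless because the tensor inner product is just the product of the two factor inner products and is therefore symmetric under swapping the order of the factors.
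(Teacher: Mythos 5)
Your proposal is correct and follows essentially the same route as the paper's own proof: substitute the Hessian decomposition \eqref{hessKr} into \eqref{gen_R1}, observe that $S_{ab}(\eta)^T D^2K^{ab}\, S_{ab}(\mu) = \gamma''_{ab}\, S^\parallel_{ab}(\eta)\, S^\parallel_{ab}(\mu) + \frac{\gamma'_{ab}}{\|q^a-q^b\|}\big\langle S^\perp_{ab}(\eta), S^\perp_{ab}(\mu)\big\rangle$, and recombine the resulting four momentum-weighted scalars into the stated inner products of differences. The extra care you take with the tensor-slot ordering is sound (the factorized inner product is indeed insensitive to swapping the factors), so there is no gap.
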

\noindent In the above we use the inner product of tensor products,
$\langle v_1\otimes w_1,v_2\otimes w_2\rangle
:=\langle v_1,v_2\rangle\langle w_1,w_2\rangle$.
\begin{proof}
For any pair of covectors~$\eta$ and~$\mu$ 
in~$T^\ast_q\mathcal{L}$,
by \eqref{hessKr} we have that:
\begin{align*}
S^{ab}(\eta)^TD^2\!K^{ab}\,S^{ab}(\mu)
&=
\gamma''_{ab}\,
S^{ab}(\eta)^Tu^{ab}\,(u^{ab})^TS^{ab}(\mu)
+\frac{\gamma'_{ab}}{\|q^a-q^b\|}
\,
S^{ab}(\eta)^T\,\mathrm{Pr}^\perp(u^{ab})\,S^{ab}(\mu)
\\
&=
\gamma''_{ab}\,
S^{ab}(\eta)^\parallel\,S^{ab}(\mu)^\parallel
+\frac{\gamma'_{ab}}{\|q^a-q^b\|}
\,
\big\langle
S^{ab}(\eta)^\perp,S^{ab}(\mu)^\perp
\big\rangle
.
\end{align*}
Inserting this expressions into~\eqref{gen_R1}
yields the desired result.
\end{proof}
\subsection{One landmark with nonzero momenta}
A simple special case is when only one landmark carries momentum. We now compute the numerator of sectional curvature when both cotangent vectors are nonzero at only \em one \em of the $D$-dimensional landmarks $(q^1,\ldots,q^N)$. We define:
$$(T^\ast_q\mathcal{L})_{1}:=
\big\{\eta\in T^\ast_q\mathcal{L}\,\big|\,\eta_a=0\mbox{ for }a>1\big\}$$
so that the elements of the above set are cotangent vectors of the type
$\eta=(\eta_1,0,\ldots,0)$.
\begin{proposition}
\label{one_momentum}
In $\mathcal{L}^N(\mathbb{R}^D)$, for any pair
$\alpha,\beta\in (T^\ast_q\mathcal{L})_{1}$
the four terms of~$R(\alpha^\sharp,\beta^\sharp,
\beta^\sharp,\alpha^\sharp)$ are given by $R_1=R_2=R_3=0$ and $R_4  = -\frac{3}{4}\sum_{a,b=2}^N\langle H^a,H^b\rangle_{\mathbb{R}^D}
(\mathbf{ K}^{-1})_{ab}$,
where
$$H^a:=(\gamma_{a1}-\gamma_0)\big(\langle\alpha_1,\nabla K^{a1}\rangle\beta_1
-\langle\beta_1,\nabla K^{a1}\rangle\alpha_1\big),
\qquad\mbox{ for }a>1.$$
\end{proposition}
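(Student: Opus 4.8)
The plan is to feed the hypothesis $\alpha=(\alpha_1,0,\dots,0)$ and $\beta=(\beta_1,0,\dots,0)$ into the four terms $R_1,\dots,R_4$ produced by Theorem~\ref{ThCurvLM} and observe that almost everything collapses. The single decisive observation is that the \emph{force vanishes identically} on $(T^\ast_q\mathcal L)_1$: for any $\eta,\mu$ supported at the first landmark, formula~\eqref{Fdef} reads $F_{ai}(\eta,\mu)=\tfrac12\sum_b\partial_iK^{ab}(\langle\eta_a,\mu_b\rangle+\langle\mu_a,\eta_b\rangle)$, and the surviving inner products force $a=b=1$, where $\partial_iK^{11}=\partial_iK(0)=0$ since the symmetry $K(x)=K(-x)$ gives $\nabla K(0)=0$. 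Hence $F_a(\alpha,\beta)=F_a(\alpha,\alpha)=F_a(\beta,\beta)=0$ for every $a$.

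Granting this, $R_2=0$ and $R_3=0$ follow immediately, because expressions~\eqref{gen_R2} and~\eqref{gen_R3} are built entirely out of forces. For $R_1$ I would argue structurally rather than by computation: the summand of~\eqref{gen_R1} contracts the ``left'' vector $\alpha_a\otimes S_{ab}(\beta)-\beta_a\otimes S_{ab}(\alpha)$ against the ``right'' vector $\alpha_b\otimes S_{ab}(\beta)-\beta_b\otimes S_{ab}(\alpha)$. The left vector vanishes unless $\alpha_a$ or $\beta_a$ is nonzero, i.e.\ unless $a=1$; the right vector vanishes unless $b=1$. Since the sum ranges over $a\ne b$, no pair of indices survives, so $R_1=0$.

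The only surviving term is $R_4$, which I obtain from the identity $R_4=-\tfrac34\|H\|^2_{\mathbf{K}^{-1}}$ established in the proof of Theorem~\ref{ThCurvLM}, where $H_{ia}=\sum_b\big(C_{ab}(\beta)\alpha_{bi}-C_{ab}(\alpha)\beta_{bi}\big)$. First I specialize the strain and compression: $S_{ab}(\alpha)=(K^{a1}-K^{b1})\alpha_1$, so $C_{ab}(\alpha)=(K^{a1}-K^{b1})\langle\alpha_1,\nabla K^{ab}\rangle$, and likewise for $\beta$. Since $\alpha_b=\beta_b=0$ for $b>1$, only $b=1$ contributes to $H_a$, and with $K^{a1}=\gamma_{a1}$, $K^{11}=\gamma_0$ this gives
\[
H_a=C_{a1}(\beta)\alpha_1-C_{a1}(\alpha)\beta_1=(\gamma_{a1}-\gamma_0)\big(\langle\beta_1,\nabla K^{a1}\rangle\alpha_1-\langle\alpha_1,\nabla K^{a1}\rangle\beta_1\big),
\]
which is the vector $H_a$ in the statement up to an overall sign that is irrelevant under the quadratic form.

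Finally I note that $H_1=0$, since $\gamma_{11}-\gamma_0=0$ (equivalently $\nabla K^{11}=0$). Expanding the norm by its definition, $\|H\|^2_{\mathbf{K}^{-1}}=\sum_{a,b}\langle H_a,H_b\rangle(\mathbf{K}^{-1})_{ab}$, the outer indices may therefore be restricted to $a,b\ge 2$, producing exactly the asserted $R_4=-\tfrac34\sum_{a,b=2}^N\langle H_a,H_b\rangle(\mathbf{K}^{-1})_{ab}$. I expect no genuine obstacle here: the entire content of the proof is the vanishing of the force on $(T^\ast_q\mathcal L)_1$, after which the three zeros and the single bookkeeping computation for $R_4$ follow directly from the formulas already derived.
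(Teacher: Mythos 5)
Your proof is correct and follows essentially the same route as the paper's: both rest on the single observation that all mixed forces vanish on $(T^\ast_q\mathcal{L})_{1}$ (killing $R_2$ and $R_3$), and your $R_4$ computation via the compressions $C_{a1}$ is identical in substance to the paper's, since your $H_a$ differs from the paper's $D^a(\alpha,\beta)-D^a(\beta,\alpha)$ only by an overall sign, which is immaterial in the quadratic form. If anything, you are more explicit than the paper, which disposes of $R_1$ by a bare appeal to Theorem~\ref{ThCurvLM}, whereas you supply the support argument actually needed there (the left factor forces $a=1$, the right forces $b=1$, and the restriction $a\neq b$ excludes both).
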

\begin{proof}
The vanishing of~$R_1$ can be checked directly
(note that the sum in~\eqref{gen_R1} is taken over $a\not= b$ since~$S^{cc}(\eta)=0$ for all~$c$ and $\eta$).  
Also, using formula \eqref{Fdef} we see that \em all \em mixed forces $F_a$ are zero, therefore 
$R_2=R_3=0$ by formulae~\eqref{gen_R2} and~\eqref{gen_R3}. Also, by~\eqref{landmarkderdef},
$D^a(\alpha,\beta)
=
(\gamma_{a1}-\gamz)
\big\langle\alpha_1, \nabla K^{a1}\big\rangle\beta_1
$ since~$\alpha,\beta\in(T_q^\ast\mathcal{L})_1$;
a similar expression holds for~$D^a(\beta,\alpha)$,
which concludes the proof by~\eqref{gen_R4}.
\end{proof}
Therefore when~$\alpha,\beta\in(T^\ast_q\mathcal{L})_{1}$ 
the sectional curvature is \em always \em negative; we can understand this by considering the geodesic flow in this case. It follows immediately from Proposition~\ref{prop_ham} 
 that if we start with zero momenta $p_a$ at all $q^a, a>1$, then the momenta at these points stay zero, while the momentum at $q^1$ remains constant. Thus the velocity of~$q^1$ is just given by~$
K(0)p_1$ and this is constant. The point $q^1$ carrying the momentum moves in a straight line at constant speed,
 while the other points~$q^a$ ($a>1$) are carried along 
 by the global flow that the motion of $q^1$ causes and
 move at speeds~$\dot{q}^a=K^{a1}p_1$,
 which are parallel to~$\dot{q}^1$ (but not constant). 
As shown in Figure~\ref{fig_dragging} (the central landmark~$q^1$ is the only one carrying momentum) what happens is that all other landmark points are dragged along by $q^1$, more strongly when close, less when far away. Points directly in front of the path of $q^1$ pile up and points behind space out.
\begin{figure}[t]
\center{
\includegraphics[height=7.2cm]{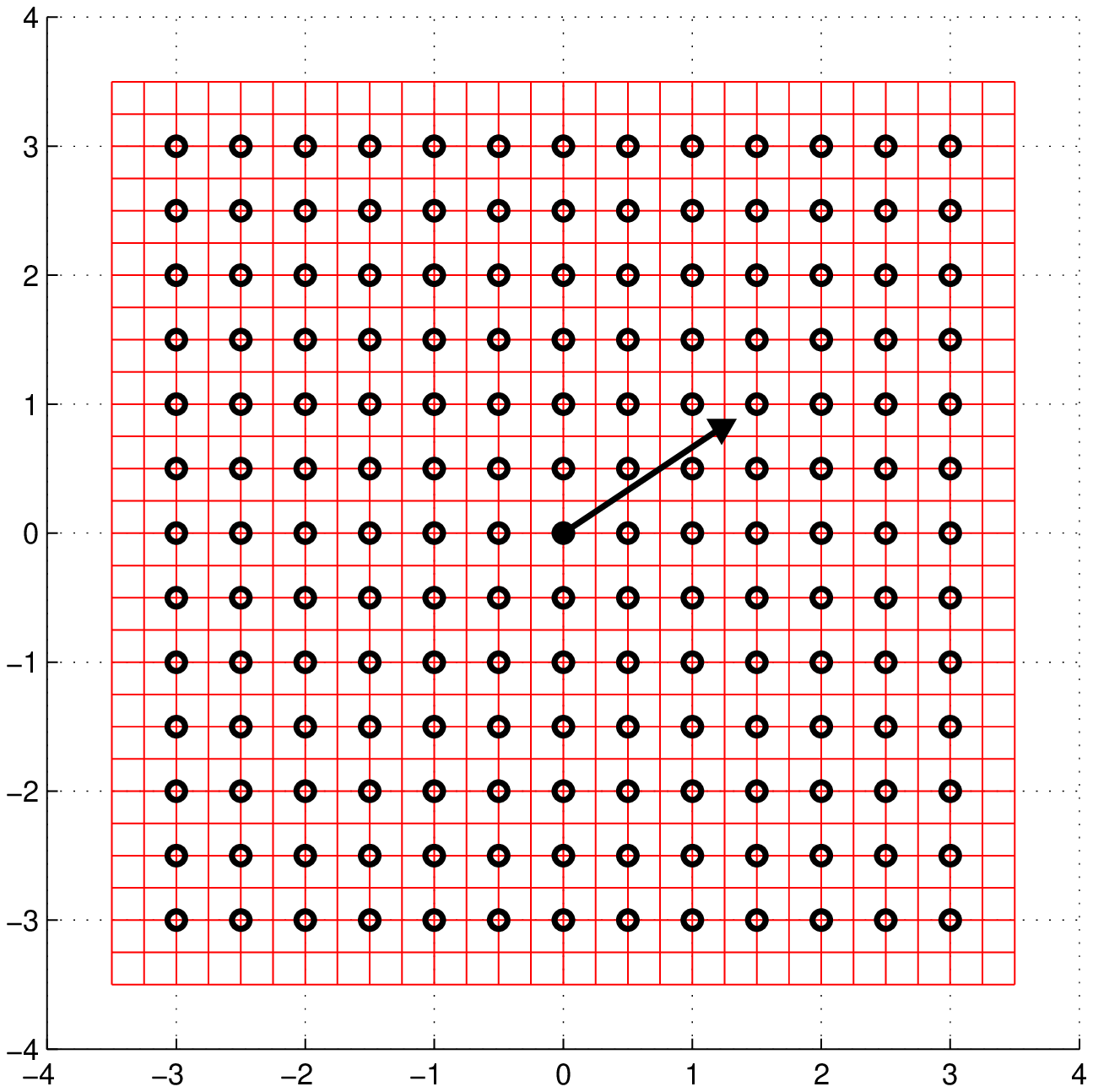}
\includegraphics[height=7.2cm]{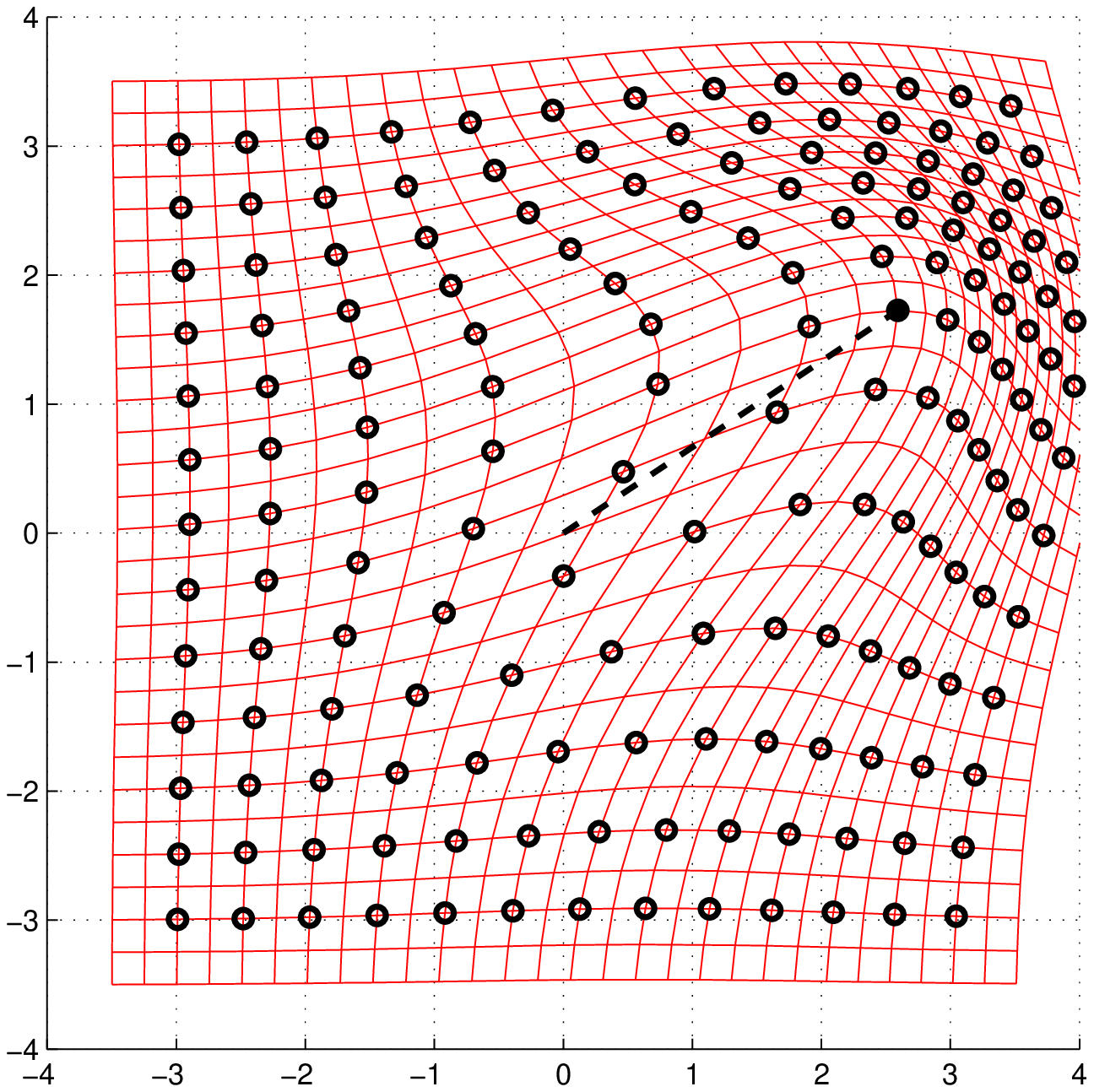}
}
\caption{
Dragging effect of one momentum-carrying landmark~$q^1$
(bullet~$\bullet$) on a grid of landmarks
(circles~$\circ$), with $\gamma(x)=\exp(-\frac{1}{2}\frac{x^2}{\sigma^2})$, $\sigma=1.5$. 
Left: initial configuration, with initial 
momentum $p_1=(2.7,1.8)$ also shown.
Right: configuration after one unit of time, with 
trajectory of~$q^1$ also shown;
the grid represents the diffeomorphism~$\varphi_{01}^v$,
obtained by integrating~$\alpha^{\mathrm{hor}}$ in time.
}\label{fig_dragging}
\end{figure} 
\par
Negative curvature can be seen by the divergence of
geodesics. If you imagine slightly changing the direction
of~$p_1$ in Figure~\ref{fig_dragging}, the final configuration of 
the landmark points (say, after one unit of time) will differ greatly from
the one caused by the original value of~$p_1$.
Also, if you imagine $q^1$ moving along two nearby parallel straight lines, the differential effect on the cloud of other points accumulates so that the final configurations will differ everywhere; thus, even though the initial landmark configurations are close, the final configurations will be far away. In general, the last negative term in the curvature expresses the same effect: the global drag effect of each point results in a kind of turbulent mixing of all the other points (think of a kitchen mixer the motion of whose blades mixes the whole bowl).
\par
Proposition~\ref{one_momentum} simplifies in the case of~$\mathcal{L}=\mathcal{L}^2(\mathbb{R}^D)$ 
(two landmarks only). We shall write:
\begin{align}
\label{ab_decomp}
\alpha_1^\parallel&:=
\langle\alpha_1,u^{12}\rangle,
&
\alpha_1^\perp&:=\alpha_1-\alpha_1^\parallel\, u^{12},
&\beta_1^\parallel&:=
\langle\beta_1,u^{12}\rangle,
&
\beta_1^\perp&:=\beta_1-\beta_1^\parallel\, u^{12}.
\end{align}
\begin{proposition}
In the case of~$\mathcal{L}=\mathcal{L}^2(\mathbb{R}^D)$, when
$\alpha,\beta\in(T^\ast_q\mathcal{L})_{1}$  the 
numerator and denominator of sectional curvature 
are given by, respectively:
\begin{align}
\label{onemom_num}
R(\alpha^\sharp,\beta^\sharp,\beta^\sharp,\alpha^\sharp)=
R_4&=-\frac{3}{4}\,\gamma_0\,
\frac{\gamma_0-\gamma_{12}}{\gamma_0+\gamma_{12}}\,
\big(\gamma'_{12}\big)^2\,
\big\|
\beta_1^\parallel\alpha_1^\perp
-
\alpha_1^\parallel\beta_1^\perp
\big\|^2,
\\
\label{onemom_den}
\|\alpha\|^2_{T^\ast \mathcal{L}}\,
\|\beta\|_{T^\ast \mathcal{L}}^2
-\langle \alpha,\beta \rangle_{T^\ast \mathcal{L}}^2
&=\gamma_0^2
\Big(
\big\|
\beta_1^\parallel\alpha_1^\perp
-
\alpha_1^\parallel\beta_1^\perp
\big\|^2
+
{\textstyle\frac{1}{2}}
\big\|
\beta_1^\perp\otimes\alpha_1^\perp
-
\alpha_1^\perp\otimes\beta_1^\perp
\big\|^2
\Big).
\end{align}
\end{proposition}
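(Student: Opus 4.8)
The plan is to obtain both formulas by specializing the already-proved general results to the case $N=2$ with $\alpha,\beta\in(T^\ast_q\mathcal{L})_1$. For the numerator I would invoke Proposition~\ref{one_momentum} directly: it already gives $R_1=R_2=R_3=0$ and expresses $R_4$ as the double sum $-\tfrac34\sum_{a,b\ge 2}\langle H_a,H_b\rangle_{\mathbb R^D}(\mathbf{K}^{-1})_{ab}$ over landmark indices $a,b\ge 2$. When $N=2$ the only surviving index is $a=b=2$, so the entire curvature collapses to the single term $R_4=-\tfrac34\langle H_2,H_2\rangle_{\mathbb R^D}(\mathbf{K}^{-1})_{22}$, and the problem reduces to evaluating these two ingredients.

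First I would compute $H_2$. Using the rotationally invariant gradient $\nabla K^{21}=\gamma'_{12}\frac{q^2-q^1}{\|q^2-q^1\|}=-\gamma'_{12}\,u_{12}$ from~\eqref{nabKr}, the inner products in the definition of $H_2$ become $\langle\alpha_1,\nabla K^{21}\rangle=-\gamma'_{12}\alpha_1^\parallel$ and $\langle\beta_1,\nabla K^{21}\rangle=-\gamma'_{12}\beta_1^\parallel$, so that $H_2=(\gamma_{12}-\gamma_0)\gamma'_{12}(\beta_1^\parallel\alpha_1-\alpha_1^\parallel\beta_1)$. The decisive simplification is that, decomposing $\alpha_1=\alpha_1^\parallel u_{12}+\alpha_1^\perp$ and $\beta_1=\beta_1^\parallel u_{12}+\beta_1^\perp$ via~\eqref{ab_decomp}, the two parallel contributions cancel, leaving $\beta_1^\parallel\alpha_1-\alpha_1^\parallel\beta_1=\beta_1^\parallel\alpha_1^\perp-\alpha_1^\parallel\beta_1^\perp$; hence $\langle H_2,H_2\rangle=(\gamma_0-\gamma_{12})^2(\gamma'_{12})^2\|\beta_1^\parallel\alpha_1^\perp-\alpha_1^\parallel\beta_1^\perp\|^2$. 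Next, with $\lambda=\infty$ the kernel matrix is $\mathbf{K}=\bigl(\begin{smallmatrix}\gamma_0&\gamma_{12}\\ \gamma_{12}&\gamma_0\end{smallmatrix}\bigr)$, so $(\mathbf{K}^{-1})_{22}=\gamma_0/(\gamma_0^2-\gamma_{12}^2)$. Substituting, one factor of $(\gamma_0-\gamma_{12})$ cancels and~\eqref{onemom_num} drops out.

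For the denominator I would start from formula~\eqref{denland} of Proposition~\ref{PrDenLan}. Since $\alpha_a=\beta_a=0$ for $a\neq 1$, every term carrying an index other than $1$ vanishes and the quadruple sum collapses to $a=b=c=d=1$, giving $\gamma_0^2\bigl(\|\alpha_1\|^2\|\beta_1\|^2-\langle\alpha_1,\beta_1\rangle^2\bigr)$. It then remains to match this Gram determinant in $\mathbb R^D$ against the bracket in~\eqref{onemom_den}. Expanding $\|\alpha_1\|^2\|\beta_1\|^2-\langle\alpha_1,\beta_1\rangle^2$ through the decomposition~\eqref{ab_decomp} (Lagrange's identity) separates it into $\|\beta_1^\parallel\alpha_1^\perp-\alpha_1^\parallel\beta_1^\perp\|^2$ plus the purely perpendicular Gram determinant $\|\alpha_1^\perp\|^2\|\beta_1^\perp\|^2-\langle\alpha_1^\perp,\beta_1^\perp\rangle^2$. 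The final identity needed is the elementary fact that for any two vectors $A,B$ one has $\|A\|^2\|B\|^2-\langle A,B\rangle^2=\tfrac12\|A\otimes B-B\otimes A\|^2$, obtained by a one-line expansion of the Frobenius norm of the antisymmetrized tensor; applying it with $A=\alpha_1^\perp$, $B=\beta_1^\perp$ completes~\eqref{onemom_den}.

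I expect no serious obstacle: the argument is essentially bookkeeping on top of Propositions~\ref{one_momentum} and~\ref{PrDenLan}. The only two points requiring care are the sign of $\nabla K^{21}$ (so that $u_{12}$ enters with the correct orientation and the $\gamma'_{12}$ factor emerges cleanly) and the cancellation of the parallel parts in $H_2$, which is precisely what makes only the perpendicular strain $\beta_1^\parallel\alpha_1^\perp-\alpha_1^\parallel\beta_1^\perp$ survive; once these are handled, the remaining Gram-determinant and tensor-norm manipulations are routine.
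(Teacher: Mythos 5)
Your proposal is correct and takes essentially the same approach as the paper: both reduce the numerator via Proposition~\ref{one_momentum} to $R_4=-\tfrac34\|H_2\|^2(\mathbf{K}^{-1})_{22}$, evaluate $H_2$ using the rotationally invariant gradient and the decomposition~\eqref{ab_decomp}, and obtain the denominator from Proposition~\ref{PrDenLan} together with Lagrange's identity. The only cosmetic difference is that you simplify $H_2$ (cancelling the parallel parts) before taking its norm, whereas the paper expands $\|H_2\|^2$ as a quadratic form first and substitutes~\eqref{ab_decomp} afterwards.
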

\noindent{Again} we have used the inner product of tensor products
$\langle v_1\otimes w_1,v_2\otimes w_2\rangle
:=\langle v_1,v_2\rangle\langle w_1,w_2\rangle$.
\begin{proof}
It is the case that $R_4=-\frac{3}{4}\|H^2\|^2(\mathbf{ K}^{-1})_{22}$
(matrices~$H^a$ were defined in Proposition~\ref{one_momentum}).
But~$(\mathbf{ K}^{-1})_{22}=(\gamz^2-\gam^2)^{-1}\gamz$, whereas
from Proposition~\ref{one_momentum} we have
$$
\|H^2\|^2=(\gamz-\gam)^2
\big(
\langle\alpha_1,\nabla K^{12}\rangle^2
\|\beta_1\|^2
+
\langle\beta_1,\nabla K^{12}\rangle^2
\|\alpha_1\|^2
-2
\langle\alpha_1,\nabla K^{12}\rangle
\langle\beta_1,\nabla K^{12}\rangle
\langle\alpha_1,\beta_1\rangle
\big),
$$
where~$\nabla K^{12}=\gamp u^{12}$
by~\eqref{nabKr}.
Inserting expressions~\eqref{ab_decomp} into the above formula yields~\eqref{onemom_num}.
From Proposition~\ref{PrDenLan} we have that the denominator
is given by~$\gamz^2
(
\|\alpha_1\|^2
\|\beta_1\|^2
-\langle\alpha_1,\beta_1\rangle^2
)
$; again, inserting~\eqref{ab_decomp} into such formula
yields~\eqref{onemom_den}.
\end{proof}
We will generalize the above results in the next section.
\section{Landmark geometry with two nonzero momenta}
The complexity of the formula for curvature reflects a real complexity in the geometry of the landmark space. But there is one case in which the geometry such space can be analyzed quite completely. This is when there are only two nonzero momenta along a geodesic. To put this in context, we first introduce a basic structural relation between landmark spaces.

\subsection{Submersions between landmark spaces}
Instead of labeling the landmarks as $1,2,\cdots,N$, one can use any finite index set $\mathcal A$ and label the landmarks as $q^a$ with $a \in \mathcal A$. And instead of calling the landmark space $\mathcal L^N$, we can call it $\mathcal L^{\mathcal A}$. Now suppose we have a subset $\mathcal B \subset \mathcal A$. Then there is a natural projection $\pi: \mathcal L^{\mathcal A} \rightarrow \mathcal L^{\mathcal B}$ gotten by forgetting about the points with labels in $\mathcal A - \mathcal B$. In the metrics we have been discussing {\it this is a submersion}. In fact, the kernel of $d\pi$, the vertical subspace of $T\mathcal{L}^{\mathcal A}$, is the space of vectors $v^a$ such that $v^a=0$ if $a \in \mathcal B$. Its perpendicular in $T^\ast$ is:
$$
(T^\ast\mathcal{L}^{\mathcal A})_{\mathcal B} := \big\{
p\in T^\ast\mathcal{L}^{\mathcal A}\,\big| p_a=0\mbox{ for }a \in \mathcal A - \mathcal B \big\}
$$
so the orthogonal complement of ker($d\pi$) in $T\mathcal L^{\mathcal A}$ is the space of vectors $p^\sharp$ where $p$ is in $(T^\ast\mathcal{L}^{\mathcal A})_{\mathcal B}$. On this subspace, the norm is just 
$$\sum_{b,b' \in \mathcal B}K(q^b - q^{b'})\langle p_b, p_{b'}\rangle$$
whether $p^\sharp$ is taken to be a tangent vector to $\mathcal A$ or to $\mathcal B$. In other words, the horizontal subspace for the submersion $\pi$ is the subbundle $(T^\ast \mathcal L^{\mathcal A})_{\mathcal B}^\sharp \subset T\mathcal L^{\mathcal A}$ of tangent vectors $p^\sharp$ where $p$ has zero components in $\mathcal A - \mathcal B$ and this has the same metric as the tangent space to $\mathcal L^{\mathcal B}$. In particular, from the general theory of submersions, we know that every geodesic in $\mathcal L^{\mathcal B}$ beginning at some point~$\pi(\{q^a\})$ has a unique lift to a horizontal geodesic in $\mathcal L^{\mathcal A}$ starting at $\{q^a\}$. The picture to have is that all the landmark spaces form a sort of inverse system of spaces whose inverse limit is the group of diffeomorpisms of $\mathbb R^D$. 

We don't want to pursue this is in general, but rather we will study the special case where the cardinality of $\mathcal B$ is two. We might as well, then, go back to our former terminology and consider the map $\pi: \mathcal L^N \rightarrow \mathcal L^2$ gotten by mapping an $N$-tuple $(q^1,q^2,\cdots, q^N)$ to the pair $(q^1,q^2)$. Moreover, we want to consider only the case in which the kernel $K$ is \em rotationally invariant \em as in~(K4).
A basic quantity in all that follows is the distance 
$\rho:=\|q^1-q^2\|$ between the two momentum bearing points. 

\subsection{Two momentum geodesics}
Remarkably, we can describe, more or less explicitly, all the geodesics which arise as horizontal lifts from this map. These are the geodesics with nonzero momenta only at $q^1$ and $q^2$. Moreover, the formula for sectional curvature for the 2-plane spanned by any two horizontal vectors can be analyzed. This analysis was started in the PhD thesis of the first author \cite{micheli.phd} and has been pursued further in \cite{McLMars}.
\par
The metric tensor of~$\mathcal{L}=\mathcal{L}^2(\mathbb R^D)$ in 
coordinates is obtained by inverting the $2\times 2$ matrix {\bf K}: 
\begin{equation}
\label{invK}
\mathbf{ K}=\left[\begin{array}{cc}\gamma_0 & \gamma(\rho) \\
\gamma(\rho) & \gamma_0 \end{array}\right]
\;\;
\Longrightarrow
\;\;
\left\{\begin{array}{l}
(\mathbf{ K}^{-1})_{11}  =  (\mathbf{ K}^{-1})_{22}  =  
(\gamma_0^2-\gamma(\rho)^2)^{-1} \gamma_0
\\
(\mathbf{ K}^{-1})_{12}  =  (\mathbf{ K}^{-1})_{21}  =  
-(\gamma_0^2-\gamma(\rho)^2)^{-1} \gamma(\rho)
\end{array}\right.,
\end{equation}
so that the cometric and metric, 
for all covectors~$\alpha,\beta\in T^\ast_q\mathcal{L}$
and vectors~$v,w\in T_q\mathcal{L}$,
are simply:
\begin{align}
\label{2pt_com_1} 
g^{-1}(\alpha, \beta) &=  
\gamma_0
\big(
\langle
\alpha_1
,
\beta_1
\rangle 
+
\langle
\alpha_2
,
\beta_2
\rangle 
\big)
+ 
\gamma(\rho) \big(
\langle
\alpha_1
,
\beta_2
\rangle 
+
\langle
\alpha_2
,
\beta_1
\rangle 
\big)
, \\
\nonumber
g(v,w) &= 
\frac{1}{\gamma_0^2-\gamma(\rho)^2} 
\Big[
\,
\gamma_0
\big(
\langle
v^1
,
w^1
\rangle 
+
\langle
v^2
,
w^2
\rangle 
\big)
- 
\gamma(\rho) \big(
\langle
v^1
,
w^2
\rangle 
+
\langle
v^2
,
w^1
\rangle 
\big)\Big]
.
\end{align}
\par
The geometry of the two-point space is best understood by changing variables for the landmark coordinates~$(q^1, q^2)$
and the momentum~$(p_1, p_2)$ to their means and semi-differences, that is:
\begin{align*}
\overline q  :=&\; \frac{q^1+q^2}{2} ,
& 
\hspace*{-.5cm} 
\delta q :=&\; \frac{q^1-q^2}{2}, 
&&&
\overline p :=&\; \frac{p_1+p_2}{2} ,
& 
\hspace*{-.5cm} 
\delta p :=&\; \frac{p_1-p_2}{2} ,
\\
\mbox{so that: }\qquad
q^1 =&\; \overline q + \delta q,
& 
\hspace*{-.5cm} 
q^2 =&\; \overline q - \delta q,
&&&
p_1 =&\; \overline p + \delta p,
& 
\hspace*{-.5cm} 
p_2 =&\; \overline p - \delta p.
\end{align*} 
Then the cometric~\eqref{2pt_com_1} becomes:
\begin{align} \label{2ptcometric} 
g^{-1}\big((\overline \alpha, \delta \alpha), (\overline \beta, 
\delta \beta)\big)  &=  
2\big(\gamma_0+\gamma(\rho)\big)
\,
\big\langle
\overline{\alpha}
,
\overline{\beta}
\big\rangle
+ 
2\big(\gamma_0 - \gamma(\rho) \big)
\,
\big\langle
\delta{\alpha}
,
\delta{\beta}
\big\rangle. 
\end{align}
With these coordinates, the two-point landmark space becomes a product $\overline V \times V_\delta$ in which all fibers $\overline V \times \{\delta q_0\}$ are flat Euclidean spaces though with variable scales, all fibers $\{\overline q_0\}\times V_\delta$ are {\it conformally} flat metrics sitting on the manifold $\mathbb R^D - \{0\}$ and the tangent spaces of the two factors are orthogonal. 
\begin{proposition}
In terms of means and semi-differences, the geodesic equations for 
$\mathcal{L}^2(\mathbb R^D)$ are:
\begin{equation}
\label{2ptgeodeq}
\begin{aligned}
\dot{\overline q} &= 
\big(\gamma_0+\gamma(\rho)\big)\,\overline p,
& &\hspace*{2cm}&
\dot{\overline p} &= 0,
\\
\dot{\delta q} &= \big(\gamma_0-\gamma(\rho)\big)\,\delta p,
& & &
\dot{\delta p} &= - 2\frac{\gamma'(\rho)}{\rho}
\big(\|\overline p\|^2 - \|\delta p\|^2\big)\,\delta q.
\end{aligned}
\end{equation}
\end{proposition}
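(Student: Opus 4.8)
The plan is to derive the system~\eqref{2ptgeodeq} directly from Hamilton's equations~\eqref{ham_eq} of Proposition~\ref{prop_ham}, specialized to $N=2$, by performing the linear change of variables to means and semi-differences. Since $\overline q,\delta q,\overline p,\delta p$ are obtained from $q^1,q^2,p_1,p_2$ by a fixed invertible linear map, each new equation is simply the half-sum or half-difference of two old ones, so no chain-rule bookkeeping is needed beyond differentiating $\gamma(\rho)$.

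First I would write out~\eqref{ham_eq} for two landmarks. Using $K^{11}=K^{22}=\gamma_0$ and $K^{12}=K^{21}=\gamma(\rho)$, the position equations read $\dot q^1=\gamma_0 p_1+\gamma(\rho)p_2$ and $\dot q^2=\gamma(\rho)p_1+\gamma_0 p_2$. Taking half their sum and half their difference, and recalling $\overline p=\tfrac12(p_1+p_2)$, $\delta p=\tfrac12(p_1-p_2)$, yields at once $\dot{\overline q}=(\gamma_0+\gamma(\rho))\overline p$ and $\dot{\delta q}=(\gamma_0-\gamma(\rho))\delta p$, the two left-hand equations of~\eqref{2ptgeodeq}. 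For the momenta, the facts $\nabla K^{11}=\nabla K^{22}=0$ together with the skew-symmetry $\nabla K^{21}=-\nabla K^{12}$ collapse the second line of~\eqref{ham_eq} to $\dot p_1=-\nabla K^{12}\langle p_1,p_2\rangle$ and $\dot p_2=+\nabla K^{12}\langle p_1,p_2\rangle$. Their half-sum is identically zero, giving $\dot{\overline p}=0$, and their half-difference is $\dot{\delta p}=-\nabla K^{12}\langle p_1,p_2\rangle$.

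It remains to re-express this last quantity in the new variables. By~\eqref{nabKr}, $\nabla K^{12}=\gamma'(\rho)\,(q^1-q^2)/\rho$; since $q^1-q^2=2\,\delta q$ and hence $\rho=2\|\delta q\|$, this equals $2\tfrac{\gamma'(\rho)}{\rho}\,\delta q$. Finally $\langle p_1,p_2\rangle=\langle\overline p+\delta p,\,\overline p-\delta p\rangle=\|\overline p\|^2-\|\delta p\|^2$, and substituting produces $\dot{\delta p}=-2\tfrac{\gamma'(\rho)}{\rho}(\|\overline p\|^2-\|\delta p\|^2)\,\delta q$, completing the system. The only step demanding care is this one: one must track the factor $2$ coming from $q^1-q^2=2\,\delta q$, which is exactly what yields the coefficient $2$ in the $\dot{\delta p}$ equation.

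As an independent check, one can instead work intrinsically with the Hamiltonian $\mathcal H=\tfrac12 g^{-1}(p,p)$ evaluated on the cometric~\eqref{2ptcometric}, namely $\mathcal H=(\gamma_0+\gamma(\rho))\|\overline p\|^2+(\gamma_0-\gamma(\rho))\|\delta p\|^2$. The subtlety here is that the canonical form $dp_1\wedge dq^1+dp_2\wedge dq^2$ becomes $2(d\overline p\wedge d\overline q+d\delta p\wedge d\delta q)$ under the change of variables, so Hamilton's equations acquire an overall factor $\tfrac12$, e.g.\ $\dot{\overline q}=\tfrac12\,\partial_{\overline p}\mathcal H$ and $\dot{\delta p}=-\tfrac12\,\partial_{\delta q}\mathcal H$; differentiating $\mathcal H$ then reproduces~\eqref{2ptgeodeq} and confirms all the coefficients.
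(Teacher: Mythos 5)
Your proof is correct and is precisely the ``direct computation'' that the paper invokes without writing out: specializing Hamilton's equations (Proposition~\ref{prop_ham}) to $N=2$, taking half-sums and half-differences, and using $\nabla K^{12}=\gamma'(\rho)\,(q^1-q^2)/\rho$ with $q^1-q^2=2\,\delta q$ to get the factor $2$ in the $\dot{\delta p}$ equation. Your supplementary check via the Hamiltonian $(\gamma_0+\gamma(\rho))\|\overline p\|^2+(\gamma_0-\gamma(\rho))\|\delta p\|^2$, correctly accounting for the non-canonical symplectic form $2\,(d\overline p\wedge d\overline q+d\delta p\wedge d\delta q)$, is a valid and welcome confirmation.
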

The above result is proven by direct computation. We can solve these equations in {\bf four steps}. 
\par
{\bf 1.} First the {\it linear momentum} $\overline p$ is a constant, so 
``center of mass'' 
$\overline q$ moves in a straight line parallel to this constant:
\begin{equation} \label{solveq} 
\overline{q}(t) =\overline q(0) + \Big(\int_0^t \big(\gamma_0+\gamma(\rho(\tau))\big)d\tau\Big)\, \overline{p}.\end{equation}
\par
{\bf 2.} Secondly, if we treat
vectors~$\delta q$ and~$\delta p$
as 1-forms in~$\mathbb{R}^D$,
equations~\eqref{2ptgeodeq} also show that:
$$ (\delta q \wedge \delta p)^\centerdot = \dot{\delta q}\wedge \delta p + \delta q \wedge \dot{\delta p} = [\text{(scalar)}\,\delta p] \wedge \delta p + \delta q \wedge [\text{(scalar)}\, \delta q] = 0,$$
so the {\it angular momentum} 2-form $\delta q \wedge \delta p
\in \bigwedge^2\mathbb{R}^D$ is {\it constant\/}; we write this as $\omega\, e^1 \wedge e^2$ where $\omega$ is the nonnegative real magnitude of the angular momentum and $(e^1, e^2)$ is an orthonormal pair. Then it follows that:
$$ \delta q(t) = \tfrac12 \rho(t)\big[\cos\big(\theta(t)\big)e^1 + \sin\big(\theta(t)\big)e^2\big],
\quad\text{for some function }\theta(t).$$
\par
{\bf 3.} Thirdly, we can express $\theta(t)$ as an integral:
\begin{align*}
\dot{\delta q} &= \tfrac12 \dot{\rho}\big[
\cos(\theta)e^1+\sin(\theta)e^2\big] + 
\tfrac12 \rho \dot{\theta} \big[
-\sin(\theta)e^1+\cos(\theta)e^2\big], \quad \text{so} \\
\dot{\delta q} \wedge \delta q &= -\tfrac14 \rho^2 \dot{\theta} \, e^1 \wedge e^2, \quad \text{as well as
(from~\eqref{2ptgeodeq}):} \\
\dot{\delta q} \wedge \delta q &= 
\big(\gamma_0-\gamma(\rho)\big) \delta p \wedge \delta q = 
-\omega\,\big(\gamma_0-\gamma(\rho)\big)\, e^1 \wedge e^2 ;
\end{align*}
combining the second and third lines, we find:
\begin{equation} \label{solvetheta} 
\theta(t) = \theta(0) + 4\omega \int_0^t \frac{\gamma_0-\gamma(\rho(\tau))}{\rho(\tau)^2} \,d\tau; \end{equation}
note that by (K1) and (K2) it is the case that  $\gamma_0\geq \gamma(\rho)$ 
for any~$\rho\geq0$, so~$\theta$ is a \em monotone increasing \em function if~$\omega\not=0$, otherwise it is a constant.
\par
{\bf 4.} The last step is to solve for $\rho(t)$. This can be done using conservation of energy~\cite[p.~51]{jost}. Equations~(\ref{2ptgeodeq}) are in fact the cogeodesic  equations for the Hamiltonian~$\mathcal{H}(p,q)$
of section~\ref{ham_form},
which we may rewrite in terms of means and semi-differences
as
$$ \mathcal{H}
=\big(\gamma_0+\gamma(\rho)\big)\|\overline p\|^2 + 
\big(\gamma_0-\gamma(\rho)\big)\|\delta p\|^2$$
by~\eqref{2ptcometric};
hence this function of $\rho$ and $\|\delta p\|$ is a constant
($\overline{p}$ is also a constant). Then we calculate:
\begin{align*}
(\rho^2)^\centerdot &= 4\langle \delta q, \delta q\rangle^\centerdot = 8\langle \dot{\delta q}, \delta q \rangle = 8
\big(\gamma_0-\gamma(\rho)\big)\langle \delta p, \delta q\rangle \quad \Longrightarrow
\quad
\dot{\rho} 
= 4\frac{\gamma_0-\gamma(\rho)}{\rho} 
\langle \delta p, \delta q\rangle.
\end{align*}
But:
\begin{align*}
\quad \langle \delta p, \delta q\rangle^2 + \omega^2
&= \langle \delta p, \delta q\rangle^2 + \|\delta p \wedge \delta q\|^2 = \|\delta p\|^2\cdot \|\delta q\|^2 = \frac{\rho^2}{4} \bigg( \frac{
\mathcal{H}-(\gamma_0+\gamma(\rho))\|\overline{p}\|^2}{\gamma_0-\gamma(\rho)} \bigg),\\
\Longrightarrow
\quad \dot{\rho} &=2\frac{\sqrt{\gamma_0-\gamma(\rho)}}{\rho} \sqrt{ \rho^2 \big[\mathcal{H}
-\big(\gamma_0+\gamma(\rho)\big)\|\overline{p}\|^2\big] - 4 \omega^2\big(\gamma_0-\gamma(\rho)\big)}.
\end{align*}
This means that the function $\rho(t)$ is the solution of:
\begin{align} 
\label{solverho}
t &= \int_{\rho(0)}^{\rho(t)} \frac{x\, dx}{2\sqrt{F(x)}},\quad \mbox{where:}\quad 
F(x) := \mathcal{H}\,x^2\big(\gamma_0-\gamma(x)\big)-\|\overline p\|^2 x^2\big(\gamma_0^2-\gamma(x)^2\big) - 4\omega^2 \big(\gamma_0-\gamma(x)\big)^2.
\end{align}
\begin{summary}
If we fix constants $\mathcal{H}$, $\overline p$, $\omega$, $\rho(0)$, $\theta(0)$, $q^a(0)$ (for all $a$), we can first integrate (\ref{solverho}) to get $\rho(t)$ (the separation of $q^1$ and $q^2$), then integrate (\ref{solvetheta}) to find their relative angle $\theta(t)$, then integrate~(\ref{solveq}) to get their center of mass $\overline q(t)$. This gives the trajectories of $q^1$ and $q^2$. The remaining points are dragged along as solutions of:
$ \tfrac{d}{dt}q^a(t) = \gamma
\big(\|q^a(t)-q^1(t)\|\big)p_1(t) + \gamma\big(\|q^a(t)-q^2(t)\|\big)
p_2(t).$
\end{summary}
As worked out in \cite{McLMars}, one can classify the global behavior of these geodesics into two types. One is the scattering type in which $q^1, q^2$ diverge from each other as time goes to either $\pm \infty$. This occurs if the linear or angular momentum is large enough compared to the energy. In the other case where the energy is large enough compared to both momenta, they come together asymptotically at either $t=+\infty$ or $-\infty$, diverging at the other limit. In both cases, they may spiral around each other an arbitrarily large number of times (see Figure~\ref{fig_2Dtraj}).
\begin{figure}[t]
\center{
\includegraphics[height=6.5cm]{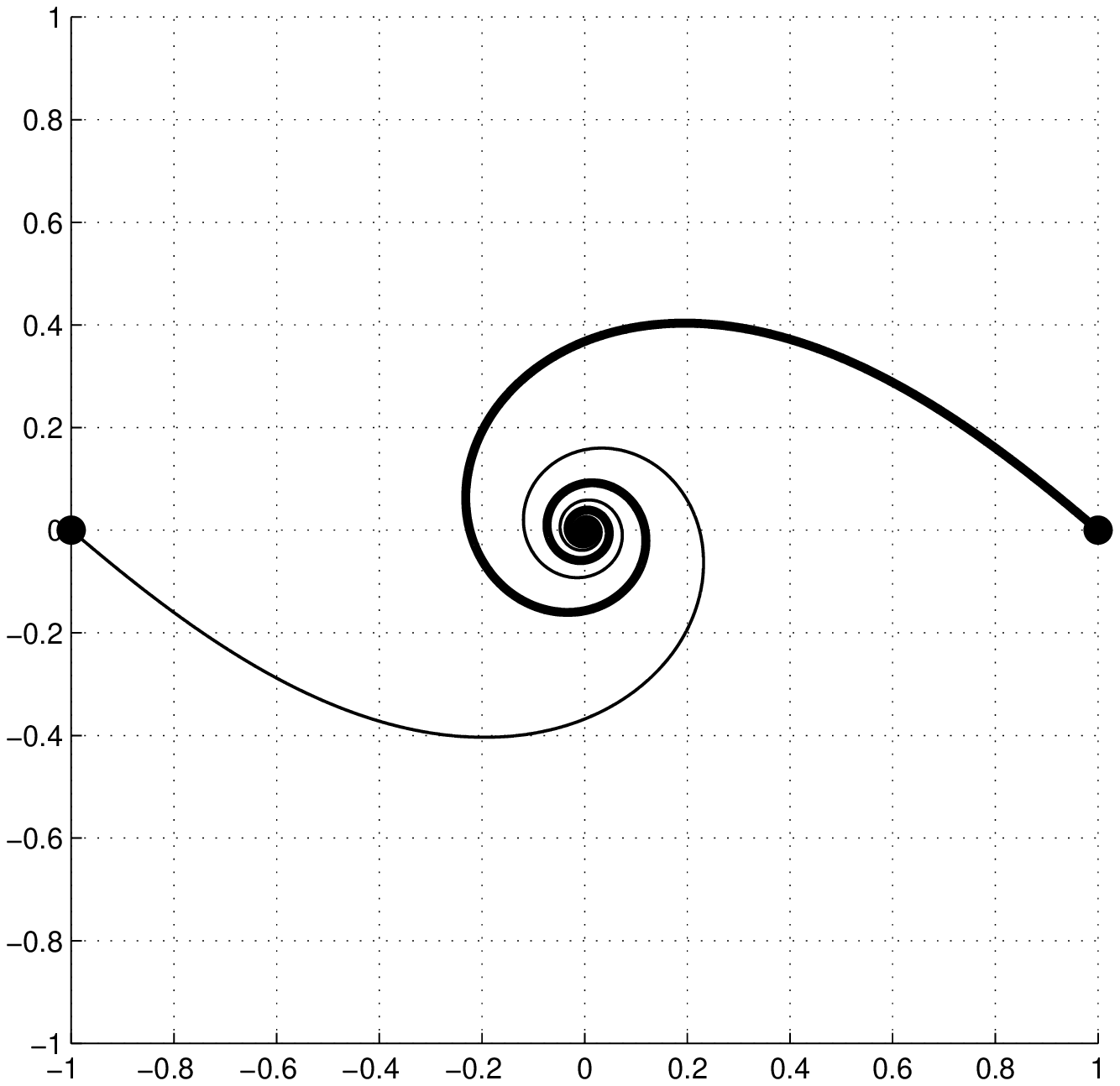}
\includegraphics[height=6.5cm]{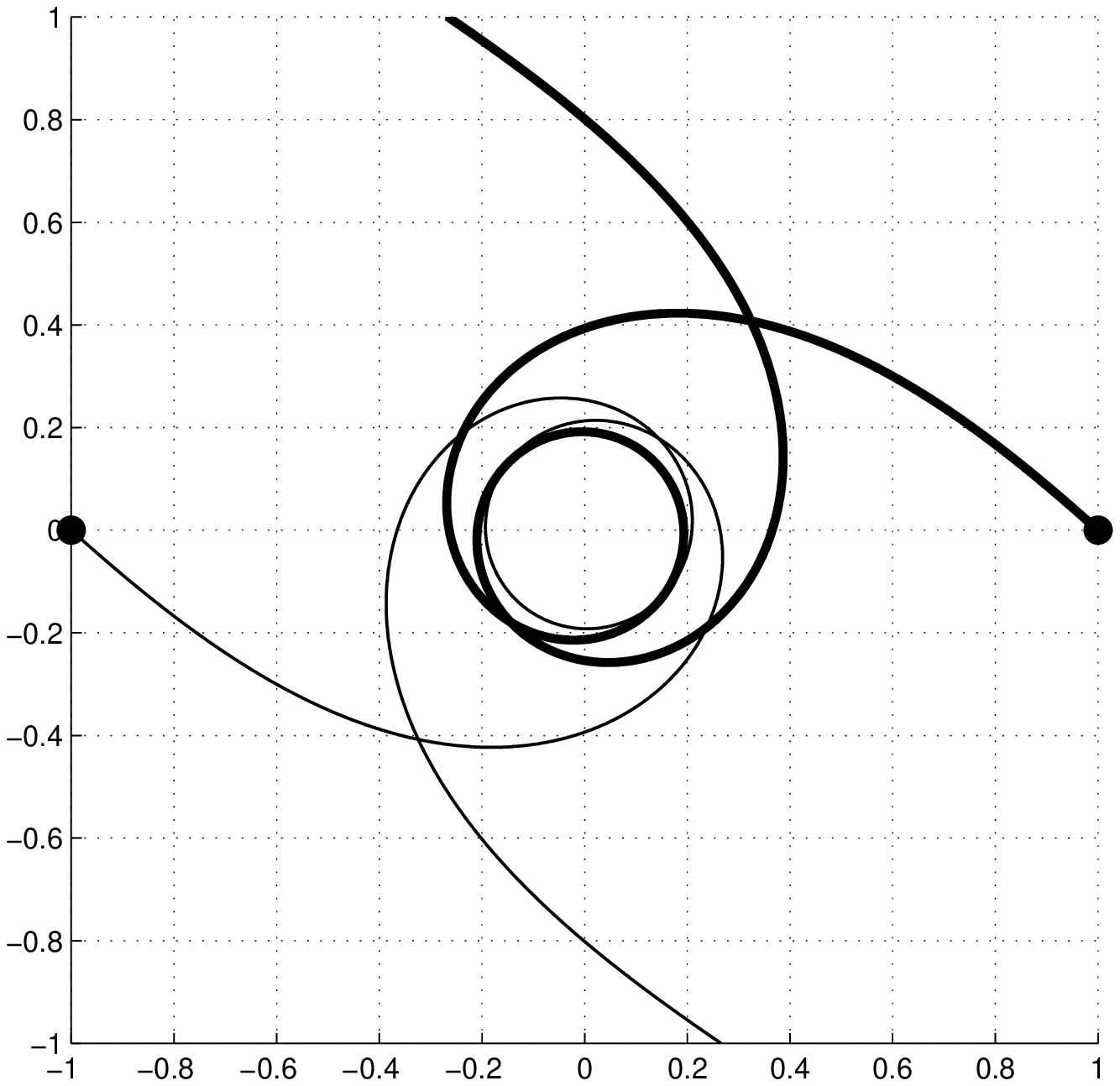}
}
\caption{Converging and diverging trajectories for two landmarks in two dimensions. In these examples $\gamma(x)=
\exp(-\frac{1}{2}x^2)$, $(q^1(0),q^2(0))=((1,0),(-1,0))$, 
$(p_1(0),p_2(0))=((-10, 8.6),(10,-8.6))$ for the graph on the left,
$(p_1(0),p_2(0))=((-10, 9),(10,-9))$ for the graph on the right.
The thick and thin lines are, respectively, the 
trajectories of the first and second landmarks.
}\label{fig_2Dtraj}
\end{figure} 
\subsection{Decomposing curvature}
Next we consider~$\mathcal{L}^N(\mathbb{R}^D)$: we want to compute the sectional curvature $R(\alpha^\sharp,\beta^\sharp,\beta^\sharp,\alpha^\sharp)$ for cotangent vectors that are nonzero at only $(q^1,q^2)$. 
Also, we will use the notation $u:=\frac{q^1-q^2}{\|q^1-q^2\|} $ for the unit vector from $q^2$ to $q^1$ as well as $\rho = \|q^1-q^2\|$ for their distance. Similarly to~\eqref{S_decomp}, we will also want to decompose any vector in $\eta\in\mathbb R^D$ into its parts tangent to $u$ and perpendicular to $u$:
$$\eta^\parallel:=\langle\eta, u\rangle, \quad
\mathrm{and}\quad 
\eta^\bot := \eta - \eta^\parallel\, u.$$
Once again note that~$\eta^\parallel$ is a scalar 
whereas~$\eta^\perp$ is a vector.
Following the notation used to describe geodesics above, for any $\alpha \in (T^\ast_q\mathcal{L})_{1,2}
:=
\big\{\eta\in T^\ast_q\mathcal{L}\,\big|\,\eta_a=0\mbox{ for }a>2\big\}
$, 
we write $\overline \alpha = \tfrac12 (\alpha_1+\alpha_2)$ and $\delta \alpha = \tfrac12 (\alpha_1-\alpha_2)$. 

\begin{proposition}
\label{ThCurvLM2}
In $\mathcal{L}^N(\mathbb{R}^D)$ for any pair~$\alpha,\beta\in (T^\ast_q\mathcal{L})_{1,2}$,
the terms $R_1, R_2$ and $R_3$ in the numerator of sectional 
curvature 
can be written as
\begin{align*}
R_1 &= 4\big(\gamma_0-\gamma(\rho)\big)^2 \gamma''(\rho)
\,\big\langle 
\delta\alpha^\parallel \beta_1 - \delta\beta^\parallel \alpha_1, 
\delta\alpha^\parallel \beta_2 - \delta\beta^\parallel \alpha_2 
\big \rangle \\ 
&\;\;\;\;+ 4\big(\gamma_0-\gamma(\rho)\big)^2\, \frac{\gamma'(\rho)}{\rho}\, \big\langle
\delta\alpha^\bot\otimes \beta_1 - 
\delta\beta^\bot \otimes \alpha_1, 
\delta\alpha^\bot\otimes \beta_2 - 
\delta\beta^\bot \otimes \alpha_2 \big \rangle, \\
R_2 &= -4\big(\gamma_0-\gamma(\rho)\big)\,
\gamma'(\rho)^2 \,\big\langle 
\delta\alpha^\parallel \beta_1 - \delta\beta^\parallel \alpha_1, 
\delta\alpha^\parallel \beta_2 - \delta\beta^\parallel \alpha_2 
\big \rangle, \\
R_3 & = \frac{\gamma_0-\gamma(\rho)}{2}\, \gamma'(\rho)^2
\big[ \big(\langle\alpha_1,\beta_2\rangle + \langle\beta_1,\alpha_2\rangle\big)^2
-4\langle\alpha_1,\alpha_2\rangle\langle\beta_1,\beta_2\rangle\big].
\end{align*}
\end{proposition}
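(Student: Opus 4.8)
The plan is to substitute the restricted covectors $\alpha,\beta\in(T^\ast_q\mathcal L)_{1,2}$ directly into the three general expressions \eqref{gen_R1}, \eqref{gen_R2}, \eqref{gen_R3} of Theorem~\ref{ThCurvLM} and exploit the drastic simplifications that occur because only the two momenta at $q^1,q^2$ survive. First I would record the elementary building blocks. Since $\alpha_a=\beta_a=0$ for $a>2$, formula \eqref{Fdef} shows the mixed force vanishes for $a>2$, while for $a=1,2$ only the $b\ne a$ term survives; using $\nabla K^{12}=\gamma'(\rho)\,u$ from \eqref{nabKr} one gets $F_1(\alpha,\beta)=\tfrac12\gamma'(\rho)\big(\langle\alpha_1,\beta_2\rangle+\langle\beta_1,\alpha_2\rangle\big)u$ and, by the skew-symmetry $\nabla K^{21}=-\nabla K^{12}$, the key relation $F_2=-F_1$. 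In the same way the strain collapses to $S_{12}(\alpha)=(\gamma_0-\gamma(\rho))(\alpha_1-\alpha_2)=2(\gamma_0-\gamma(\rho))\,\delta\alpha$, so the scalar compression becomes $C_{12}(\alpha)=\langle S_{12}(\alpha),\nabla K^{12}\rangle=2(\gamma_0-\gamma(\rho))\gamma'(\rho)\,\delta\alpha^\parallel$, and the landmark derivative \eqref{landmarkderdef} reduces to $D^1(\alpha,\beta)=C_{12}(\alpha)\beta_2$ and $D^2(\alpha,\beta)=C_{12}(\alpha)\beta_1$.

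For $R_3$ I would use $F_2=-F_1$ together with $K^{11}=K^{22}=\gamma_0$ and $K^{12}=K^{21}=\gamma(\rho)$ to collapse the double sum in \eqref{gen_R3} over $a,c\in\{1,2\}$ into $R_3=2(\gamma_0-\gamma(\rho))\big(\|F_1(\alpha,\beta)\|^2-\langle F_1(\alpha,\alpha),F_1(\beta,\beta)\rangle\big)$; substituting the explicit $F_1$'s and using $\|u\|=1$ gives the stated expression at once. For $R_1$, the vanishing of the momenta forces the sum in \eqref{gen_R1} to run only over the pairs $(1,2)$ and $(2,1)$, and these are equal by the symmetries $S_{ab}=-S_{ba}$, $D^2K^{ab}=D^2K^{ba}$ and the symmetry of $\mathbb I_D\otimes D^2K^{ab}$; so $R_1$ equals the single $(1,2)$ term. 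After inserting $S_{12}(\beta)=2(\gamma_0-\gamma(\rho))\delta\beta$ and $S_{12}(\alpha)=2(\gamma_0-\gamma(\rho))\delta\alpha$ I would pull out the factor $4(\gamma_0-\gamma(\rho))^2$ and decompose $D^2K^{12}$ via \eqref{hessKr} into its $\gamma''(\rho)\,uu^T$ and $\frac{\gamma'(\rho)}{\rho}\mathrm{Pr}^\perp(u)$ parts; the first projects the strain slot onto $u$ (producing the $\gamma''$ term involving $\delta\alpha^\parallel,\delta\beta^\parallel$) and the second projects onto the orthogonal complement (producing the $\frac{\gamma'(\rho)}{\rho}$ term involving $\delta\alpha^\perp,\delta\beta^\perp$), matching the two lines of the claimed $R_1$.

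The most laborious step is $R_2$, where all three pieces of \eqref{gen_R2} must be expanded. Restricting the outer sum to $a=1,2$ and substituting the $D^a$ and $F_a$ above, each inner product picks out the $u$-component of the relevant momentum. The crucial simplification is the mean/difference identity $\alpha_2^\parallel-\alpha_1^\parallel=\langle\alpha_2-\alpha_1,u\rangle=-2\,\delta\alpha^\parallel$ (and likewise for $\beta$), which collapses the $a=1$ minus $a=2$ contributions into single products; after factoring out $-4(\gamma_0-\gamma(\rho))\gamma'(\rho)^2$ the remaining bracket is recognized as the expansion of $\langle\delta\alpha^\parallel\beta_1-\delta\beta^\parallel\alpha_1,\ \delta\alpha^\parallel\beta_2-\delta\beta^\parallel\alpha_2\rangle$. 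The main obstacle throughout is bookkeeping rather than conceptual: keeping the tensor-product slots in $R_1$ straight (the identity factor acts on the momenta, the Hessian on the strains) and tracking the signs coming from $F_2=-F_1$ and from the relabelings $a\leftrightarrow b$ in $R_2$ and $R_3$. No step requires more than the identities just listed.
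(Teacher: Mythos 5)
Your proposal is correct and follows essentially the same route as the paper: you first derive the simplified building blocks $S_{12}(\alpha)=2(\gamma_0-\gamma(\rho))\delta\alpha$, $F_1=-F_2$ with $F_a=0$ for $a>2$, and $D^1,D^2$ (precisely the content of the paper's Lemma~\ref{Le:FC}), then substitute them into the general formulas of Theorem~\ref{ThCurvLM}. The only cosmetic differences are that for $R_1$ you plug into \eqref{gen_R1} and decompose the Hessian via \eqref{hessKr} directly, whereas the paper routes through the already-specialized \eqref{R1rotinv}, and for $R_2$ you collapse the $a=1,2$ sum with the identity $\alpha_2^\parallel-\alpha_1^\parallel=-2\,\delta\alpha^\parallel$ instead of the paper's grouping of $D^1-D^2$ against $F_1$ — both are the same computation organized differently.
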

\noindent 
We need the 
following result.
\begin{lemma}
\label{Le:FC}
For any $\alpha\in (T^\ast_q\mathcal{L})_{1,2}$, the discrete strain $S^{12}(\alpha)$ is given by:
\begin{equation} \label{2ptdiscretestrain}
S^{12}(\alpha) = 2\big(\gamma_0-\gamma(\rho)\big)
\delta\alpha.
\end{equation}
For any pair $\alpha,\beta\in (T^\ast_q\mathcal{L})_{1,2}$ it is the case that $F_a(\alpha,\beta)=0$ for $a>2$, whereas
\begin{align}
\label{F12}
F_1(\alpha,\beta)&= -F_2(\alpha,\beta)=
\frac{\gamma'(\rho)}{2}
\big(\langle\alpha_1,\beta_2\rangle + \langle\beta_1,\alpha_2\rangle
\big)u.
\end{align}
Also,  
\begin{equation} D^1(\alpha,\beta)= 
2\big(\gamma_0-\gamma(\rho)\big)\,
\gamma'(\rho)\, \delta\alpha^\parallel\,\beta_2, \qquad
D^2(\alpha,\beta)= 2\big(\gamma_0-\gamma(\rho)\big)\,
\gamma'(\rho)\,  \delta\alpha^\parallel\,
\beta_1.\label{D12}
\end{equation}
\end{lemma}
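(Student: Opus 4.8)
The plan is to prove all three formulas by direct substitution of the rotationally invariant kernel values into the definitions of the discrete strain, the mixed force, and the landmark derivative, exploiting throughout the hypothesis that $\alpha_a=\beta_a=0$ for $a>2$ to collapse every sum over landmark indices to contributions from $a,b\in\{1,2\}$ only. No new machinery is needed; the content is already contained in the definitions given above together with the identity $\nabla K^{12}=\gamma'(\rho)\,u$ from~\eqref{nabKr}.

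First I would establish~\eqref{2ptdiscretestrain}. Starting from $S_{12}(\alpha)=\sum_c(K^{1c}-K^{2c})\alpha_c$ and dropping the vanishing terms with $c>2$, only $c=1,2$ survive. Since $K^{11}=K^{22}=\gamma_0$ and $K^{12}=K^{21}=\gamma(\rho)$, the sum telescopes to $(\gamma_0-\gamma(\rho))\alpha_1-(\gamma_0-\gamma(\rho))\alpha_2=2(\gamma_0-\gamma(\rho))\,\delta\alpha$, which is exactly the claimed formula.

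Next, for the mixed force I would use the component expression~\eqref{Fdef}. For any $a>2$ both $\alpha_a$ and $\beta_a$ vanish, so every inner product appearing in $F_a(\alpha,\beta)$ is zero and $F_a=0$. For $a=1$ the only surviving summand is $b=2$, since the $b=1$ term carries $\nabla K^{11}=0$ and all $b>2$ terms vanish by support; substituting $\nabla K^{12}=\gamma'(\rho)\,u$ gives $F_1(\alpha,\beta)=\tfrac{\gamma'(\rho)}{2}\big(\langle\alpha_1,\beta_2\rangle+\langle\beta_1,\alpha_2\rangle\big)u$. The case $a=2$ is identical except that the surviving summand is $b=1$ with $\nabla K^{21}=-\nabla K^{12}$, so $F_2=-F_1$, establishing~\eqref{F12}.

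Finally, for the landmark derivatives I would first record the scalar compression $C_{12}(\alpha)=\langle S_{12}(\alpha),\nabla K^{12}\rangle$; inserting the value of $S_{12}$ just obtained together with $\nabla K^{12}=\gamma'(\rho)\,u$ and recalling $\delta\alpha^\parallel=\langle\delta\alpha,u\rangle$ yields $C_{12}(\alpha)=2(\gamma_0-\gamma(\rho))\,\gamma'(\rho)\,\delta\alpha^\parallel$. Then, from $D^a(\alpha,\beta)=\sum_b C_{ab}(\alpha)\beta_b$ in~\eqref{landmarkderdef}, the support condition leaves only $b=2$ for $a=1$ and only $b=1$ for $a=2$ (the diagonal terms drop out since $S_{aa}=0$ forces $C_{aa}=0$). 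Using the symmetry $C_{21}(\alpha)=C_{12}(\alpha)$—which holds because both $S_{21}=-S_{12}$ and $\nabla K^{21}=-\nabla K^{12}$ change sign—I obtain $D^1(\alpha,\beta)=C_{12}(\alpha)\beta_2$ and $D^2(\alpha,\beta)=C_{12}(\alpha)\beta_1$, i.e.~\eqref{D12}. There is no genuine analytic obstacle; the only thing to watch is the bookkeeping of which landmark indices survive under the support hypothesis and the consistent handling of the sign flips coming from $\nabla K^{ab}=-\nabla K^{ba}$.
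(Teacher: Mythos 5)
Your proposal is correct and follows essentially the same route as the paper's own proof: direct substitution into the definitions of $S_{12}$, $F_a$ (formula~\eqref{Fdef}), and $D^a$ (via the compressions $C_{ab}$), using the support hypothesis to collapse the sums to indices $1,2$ and the identity $\nabla K^{12}=\gamma'(\rho)\,u$. The only difference is one of exposition—you spell out the index bookkeeping and sign flips that the paper leaves implicit ("the values for $F$ follow immediately")—so nothing further is needed.
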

\begin{remark} We are not interested in~$D^a(\alpha,\beta)$ for~$a>2$ since the terms
in formula~\eqref{gen_R2} where they appear are zero 
(because~$F_a(\alpha,\beta)=0$ for~$a>2$).
\end{remark}
\begin{proof}[Proof of Lemma~\ref{Le:FC}.] The formula for the discrete strain results from: 
\begin{align*}
S^{12}(\alpha) =(\alpha^\sharp)^1 - 
(\alpha^\sharp)^2 =
\textstyle\sum_b (K^{1b}-K^{2b})\alpha_b 
= \gamma_0\alpha_1 + \gamma(\rho)\alpha_2 - \gamma(\rho)\alpha_1 - \gamma_0\alpha_2 = 2\big(\gamma_0-\gamma(\rho)\big)
\delta\alpha.
\end{align*}
The values for $F$ follow immediately from formula \eqref{Fdef}
and~$\nabla K^{12}=\gamma'(\rho)\,u$. 
Note that: 
$$ C^{12}(\alpha)=C^{21}(\alpha)=\big\langle S^{12}(\alpha),
\nabla K^{12}\big\rangle = \big\langle 2\big(\gamma_0-\gamma(\rho)\big)
\delta\alpha,\gamma'(\rho) u \big\rangle
= 2\big(\gamma_0-\gamma(\rho)\big)\gamma'(\rho)\,\delta\alpha^\parallel, 
\mbox{ so}
$$
$D^1(\alpha,\beta)
\!=\!C^{12}(\alpha)\beta_2
\!=\!2(\gamma_0\!-\!\gamma(\rho))\gamma'(\rho) \delta\alpha^\parallel\beta_2$
and 
$D^2(\alpha,\beta)
\!=\!C^{21}(\alpha)\beta_1
\!=\!2(\gamma_0\!-\!\gamma(\rho))\gamma'(\rho) \delta\alpha^\parallel\beta_1$.
\end{proof}
\begin{proof}[Proof of Proposition~\ref{ThCurvLM2}] The $R_1$ expression follows by substituting the expressions in \eqref{2ptdiscretestrain} into formula \eqref{R1rotinv}, noting that the only non-zero terms in the latter are for
$(a,b)=(1,2)$ and
$(a,b)=(2,1)$.
\par
By Theorem~\ref{ThCurvLM} and
the fact that~$F_2=-F_1$ from
Lemma~\ref{Le:FC}, $R_2$ is given by
\begin{align}\nonumber
R_2=&\,\big\langle D^1(\alpha,\alpha)-D^2(\alpha,\alpha),
F_1(\beta,\beta)\big\rangle
+\big\langle D^1(\beta,\beta)-D^2(\beta,\beta),
F_1(\alpha,\alpha)\big\rangle \\
&-\big\langle 
D^1(\alpha,\beta)
-D^2(\alpha,\beta)
+ D^1(\beta,\alpha)
-D^2(\beta,\alpha)
,
F_1(\alpha,\beta)\big\rangle.\label{R2RHS}
\end{align}
Again by Lemma~\ref{Le:FC} we have that
$D^1(\eta,\zeta)-D^2(\eta,\zeta)=
-4\big(\gamma_0-\gamma(\rho)\big)\gamma'(\rho)\, \delta\eta^\parallel\, \delta\zeta$ for any 
pair $\eta, \zeta \in (T^\ast_q\mathcal{L})_{1,2}$,
while 
$
F_1(\eta,\zeta)=\frac{1}{2}\gamma'(\rho)
\big(\langle\eta_1,\zeta_2\big\rangle
+
\langle\eta_2,\zeta_1\rangle\big) u.
$ 
Applying this to all the terms we get the expression for $R_2$ in 
the statement of the proposition. 

As far as $R_3$ is concerned, by Theorem~\ref{ThCurvLM}:
\begin{align*}
R_3 & = \gamma_0\big[\langle F_1(\alpha,\beta),\! F_1(\alpha,\beta)
\rangle\!-\!\langle F_1(\alpha,\alpha),\! F_1(\beta,\beta)\rangle\big]
\!+\! \gamma(\rho)\big[\langle F_1(\alpha,\beta),\! F_2(\alpha,\beta)
\rangle-\!\langle F_1(\alpha,\alpha)\!,\! F_2(\beta,\beta)\rangle\big]
\\
&\;\;\;+\gamma(\rho)\big[\langle F_2(\alpha,\beta),\! F_1(\alpha,\beta) \rangle
\!-\!\langle F_2(\alpha,\alpha),\! F_1(\beta,\beta)\rangle \big]
\!+\! \gamma_0\big[\langle F_2(\alpha,\beta),\! F_2(\alpha,\beta)
\rangle-\! \langle F_2(\alpha,\alpha)\!,\! F_2(\beta,\beta) \rangle
\big]
\\
&= 2(\gamma_0-\gamma(\rho))\big[ \langle F_1(\alpha,\beta), F_1(\alpha,\beta)\rangle -\langle F_1(\alpha,\alpha), F_1(\beta,\beta)\rangle
\big]
\\
&=\textstyle \frac{\gamma_0-\gamma(\rho)}{2} (\gamma'(\rho))^2\big[
(\langle \alpha_1,\beta_2\rangle+\langle\beta_1,\alpha_2\rangle)^2
-4\langle\alpha_1,\alpha_2\rangle\langle\beta_1,\beta_2\rangle\big],
\end{align*}
where we have used the fact that~$F_2=-F_1$,
by equation~\eqref{F12}.
This completes the proof.
\end{proof}
\par
The expressions provided by Proposition~\ref{ThCurvLM2} become much clearer if we go over to means and semi-differences, i.e.~if we 
use the substitutions:
\begin{equation}
\label{subs}
\alpha_1 = \overline\alpha + \delta\alpha,\quad \alpha_2 = \overline\alpha - \delta\alpha,\quad \beta_1 = \overline\beta + \delta\beta,\quad \beta_2 = \overline\beta - \delta\beta.
\end{equation}
\begin{corollary} For any
$\alpha,\beta\in(T^\ast_q \mathcal{L})_{1,2}$,
with~$\mathcal{L}=\mathcal{L}^N(\mathbb{R}^D)$, 
it is the case that:
\label{2ptR123}
\begin{align*}
R_1 &= 4\big(\gamma_0-\gamma(\rho)\big)^2\gamma''(\rho)\, \big(
\|\delta\beta^\parallel \overline\alpha  
- \delta\alpha^\parallel \overline\beta\|^2 
- \|\delta\beta^\parallel \delta\alpha 
- \delta\alpha^\parallel \delta\beta\|^2 \big) \\
&\;\;\;\;+4\big(\gamma_0-\gamma(\rho)\big)^2 
\frac{\gamma'(\rho)}{\rho} \big(
\|\delta\beta^\bot \otimes \overline\alpha  
- \delta\alpha^\bot \otimes \overline\beta\|^2 
- \|\delta\beta^\bot \otimes \delta\alpha 
- \delta\alpha^\bot \otimes \delta\beta\|^2 \big), \\
R_2 &= -4\big(\gamma_0-\gamma(\rho)\big)\gamma'(\rho)^2\, \big(
\|\delta\beta^\parallel \overline\alpha  
- \delta\alpha^\parallel \overline\beta\|^2 
- \|\delta\beta^\parallel \delta\alpha 
- \delta\alpha^\parallel \delta\beta\|^2 \big),  \\
R_3 &= \big(\gamma_0-\gamma(\rho)\big)\gamma'(\rho)^2 \,\big(
2\|
\delta\beta \otimes \overline\alpha
-
\delta\alpha \otimes \overline\beta
\|^2
-
\|
\overline\beta \otimes \overline\alpha
- 
\overline\alpha \otimes \overline\beta 
\|^2
-
\|
\delta\beta \otimes \delta\alpha
- 
\delta\alpha \otimes \delta\beta 
\|^2 
\big).
\end{align*}
\end{corollary}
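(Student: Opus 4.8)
The plan is to treat this as a direct algebraic substitution, exploiting the fact that the change of variables \eqref{subs} sends each argument appearing in Proposition~\ref{ThCurvLM2} into a sum or difference of a ``mean'' part and a ``semi-difference'' part, after which everything collapses by polarization.

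For $R_1$ and $R_2$ I would start from the common inner-product factor $\langle \delta\alpha^\parallel \beta_1 - \delta\beta^\parallel \alpha_1,\; \delta\alpha^\parallel \beta_2 - \delta\beta^\parallel \alpha_2\rangle$. Substituting $\alpha_1 = \overline\alpha + \delta\alpha$, $\alpha_2 = \overline\alpha - \delta\alpha$, $\beta_1 = \overline\beta + \delta\beta$, $\beta_2 = \overline\beta - \delta\beta$ from \eqref{subs}, the first argument becomes $P+Q$ and the second $P-Q$, where $P := \delta\alpha^\parallel \overline\beta - \delta\beta^\parallel \overline\alpha$ collects the mean terms and $Q := \delta\alpha^\parallel \delta\beta - \delta\beta^\parallel \delta\alpha$ the semi-difference terms. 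The polarization identity $\langle P+Q, P-Q\rangle = \|P\|^2 - \|Q\|^2$ then yields exactly $\|\delta\beta^\parallel\overline\alpha - \delta\alpha^\parallel\overline\beta\|^2 - \|\delta\beta^\parallel\delta\alpha - \delta\alpha^\parallel\delta\beta\|^2$, the overall sign inside each norm being irrelevant. The perpendicular tensor factor in $R_1$ is handled verbatim, with $\otimes$ in place of the scalar multiplications and $\perp$ in place of $\parallel$; carrying the prefactors $4(\gamma_0-\gamma(\rho))^2\gamma''(\rho)$, $4(\gamma_0-\gamma(\rho))^2\gamma'(\rho)/\rho$ and $-4(\gamma_0-\gamma(\rho))\gamma'(\rho)^2$ through gives the stated $R_1$ and $R_2$ with no further work.

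For $R_3$ I would first rewrite each scalar inner product using \eqref{subs}: one checks $\langle\alpha_1,\alpha_2\rangle = \|\overline\alpha\|^2 - \|\delta\alpha\|^2$, $\langle\beta_1,\beta_2\rangle = \|\overline\beta\|^2 - \|\delta\beta\|^2$, and (after the mixed cross terms cancel) $\langle\alpha_1,\beta_2\rangle + \langle\beta_1,\alpha_2\rangle = 2(\langle\overline\alpha,\overline\beta\rangle - \langle\delta\alpha,\delta\beta\rangle)$. Substituting these into the bracket $(\langle\alpha_1,\beta_2\rangle+\langle\beta_1,\alpha_2\rangle)^2 - 4\langle\alpha_1,\alpha_2\rangle\langle\beta_1,\beta_2\rangle$ produces a quartic polynomial in the mean and semi-difference inner products. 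On the other side I would expand the target tensor norms via $\|v\otimes w - w\otimes v\|^2 = 2(\|v\|^2\|w\|^2 - \langle v,w\rangle^2)$ together with $\|\delta\beta\otimes\overline\alpha - \delta\alpha\otimes\overline\beta\|^2 = \|\delta\beta\|^2\|\overline\alpha\|^2 - 2\langle\delta\alpha,\delta\beta\rangle\langle\overline\alpha,\overline\beta\rangle + \|\delta\alpha\|^2\|\overline\beta\|^2$, and verify that the two polynomials agree term by term. The factor-of-two discrepancy between the prefactor $\tfrac12(\gamma_0-\gamma(\rho))\gamma'(\rho)^2$ in Proposition~\ref{ThCurvLM2} and the $(\gamma_0-\gamma(\rho))\gamma'(\rho)^2$ in the corollary is absorbed by the $4$ generated when squaring $2(\langle\overline\alpha,\overline\beta\rangle-\langle\delta\alpha,\delta\beta\rangle)$.

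The only real obstacle is the bookkeeping in the $R_3$ identity: one must track the seven quadratic monomials $\|\overline\alpha\|^2\|\delta\beta\|^2$, $\|\delta\alpha\|^2\|\overline\beta\|^2$, $\|\overline\alpha\|^2\|\overline\beta\|^2$, $\|\delta\alpha\|^2\|\delta\beta\|^2$, $\langle\overline\alpha,\overline\beta\rangle^2$, $\langle\delta\alpha,\delta\beta\rangle^2$ and $\langle\overline\alpha,\overline\beta\rangle\langle\delta\alpha,\delta\beta\rangle$, and confirm that their coefficients match on both sides after the factor-of-two reconciliation. The $R_1$ and $R_2$ cases, by contrast, are immediate once the polarization grouping $P\pm Q$ is recognized.
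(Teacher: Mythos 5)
Your proposal is correct and follows essentially the same route as the paper: substitute the mean/semi-difference relations \eqref{subs} into Proposition~\ref{ThCurvLM2}, observe that the two arguments of the inner product in $R_1$ and $R_2$ become $P+Q$ and $P-Q$ so that the product collapses to $\|P\|^2-\|Q\|^2$, and for $R_3$ expand both sides into the quadratic monomials in $\overline\alpha,\overline\beta,\delta\alpha,\delta\beta$ and match coefficients, with the factor of two absorbed exactly as you describe. The paper's proof performs the same substitutions and the same algebraic identities, merely written as a chain of equalities rather than with the explicit $P\pm Q$ labeling.
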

\begin{proof}
By insertion of formulae~\eqref{subs}
it is easily seen that
\begin{align*}
\big\langle 
\delta\alpha^\parallel \beta_1 - \delta\beta^\parallel \alpha_1, 
\delta\alpha^\parallel \beta_2 - \delta\beta^\parallel \alpha_2 
\big \rangle 
&=
\|\delta\beta^\parallel \overline\alpha  
- \delta\alpha^\parallel \overline\beta\|^2 
- \|\delta\beta^\parallel \delta\alpha 
- \delta\alpha^\parallel \delta\beta\|^2,
\\
\big\langle
\delta\alpha^\bot\otimes \beta_1 - 
\delta\beta^\bot \otimes \alpha_1, 
\delta\alpha^\bot\otimes \beta_2 - 
\delta\beta^\bot \otimes \alpha_2 \big\rangle
&=
\|\delta\beta^\bot \otimes \overline\alpha  
- \delta\alpha^\bot \otimes \overline\beta\|^2 
\!- \|\delta\beta^\bot \otimes \delta\alpha 
- \delta\alpha^\bot \otimes \delta\beta\|^2\!\!,
\end{align*}
so the new expressions for~$R_1$ and~$R_2$ 
follow immediately. 
Also:
\begin{align*}
\big[\langle \alpha_1,
&
\beta_2\rangle+\langle\beta_1,
\alpha_2\rangle\big]^2
-4\langle\alpha_1,\alpha_2\rangle\langle\beta_1,\beta_2\rangle
\\
&=
\big[
2(
\langle
\overline\alpha,\overline\beta
\rangle
-
\langle
\delta\alpha,\delta\beta
\rangle
)
\big]^2
-4
\big(
\langle
\overline\alpha,\overline\alpha
\rangle
-
\langle
\delta\alpha,\delta\alpha
\rangle
\big)
\big(
\langle
\overline\beta,\overline\beta
\rangle
-
\langle
\delta\beta,\delta\beta
\rangle
\big)
\\
&=
-2
\big[
2(
\langle
\overline\alpha,\overline\alpha
\rangle
\langle
\overline\beta,\overline\beta
\rangle
-
\langle
\overline\alpha,\overline\beta
\rangle^2
)
\big]
-
2
\big[
2(
\langle
\delta\alpha,\delta\alpha
\rangle
\langle
\delta\beta,\delta\beta
\rangle
-
\langle
\delta\alpha,\delta\beta
\rangle^2
)
\big]
\\
&\quad
+4
\big[
\langle
\overline\alpha,\overline\alpha
\rangle
\langle
\delta\beta,\delta\beta
\rangle
+
\langle
\overline\beta,\overline\beta
\rangle
\langle
\delta\alpha,\delta\alpha
\rangle
-
2
\langle
\overline\alpha,\overline\beta
\rangle
\langle
\delta\alpha,\delta\beta
\rangle
\big]
\\
&=
-2\|
\overline\beta \otimes \overline\alpha
- 
\overline\alpha \otimes \overline\beta 
\|^2 
-2\|
\delta\beta \otimes \delta\alpha
- 
\delta\alpha \otimes \delta\beta 
\|^2
+4\|
\delta\beta \otimes \overline\alpha
-
\delta\alpha \otimes \overline\beta
\|^2
.\qedhere
\end{align*}
\end{proof}
The fourth term $R_4$ is the only one which involves the other points 
$q^a, a>2$. But one has an inequality for this term involving the same expressions in $\alpha$ and $\beta$: 
\begin{proposition} \label{L2RD} 
Any pair $\alpha,\beta\in (T^\ast\mathcal{L}^N)_{1,2}$ are constant 1-forms on $\mathcal L^N$ which are pull-backs via the submersion $\mathcal L^N \rightarrow \mathcal L^2$ of constant 1-forms on $\mathcal L^2$. We can therefore consider the curvature term $R_4({\mathcal L^N}) = -\tfrac34 \| [\alpha^\sharp, \beta^\sharp]_{\mathcal L^N}\|^2$ on $\mathcal L^N$ and the corresponding term $R_4({\mathcal L^2}) = -\tfrac34 \| [\alpha^\sharp, \beta^\sharp]_{\mathcal L^2}\|^2$ on $\mathcal L^2$. Then we have the inequality:
\begin{equation*} R_4(\mathcal L^N) \le R_4(\mathcal L^2) = 
-6\gamma'(\rho)^2 \bigg[
\frac{\big(\gamma_0-\gamma(\rho)\big)^2}{\gamma_0+\gamma(\rho)}
\|
\delta\beta^\parallel \overline\alpha
-
\delta\alpha^\parallel \overline\beta 
\|^2 
+
\big(\gamma_0-\gamma(\rho)\big)\, 
\|
\delta\beta^\parallel \delta\alpha
-
\delta\alpha^\parallel\delta\beta 
\|^2\bigg].
\end{equation*}
\end{proposition}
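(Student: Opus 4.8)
The plan is to treat the two assertions of the proposition separately: the inequality $R_4(\mathcal{L}^N)\le R_4(\mathcal{L}^2)$ is a consequence of the Riemannian submersion structure introduced above, while the closed-form expression for $R_4(\mathcal{L}^2)$ follows by direct computation from the last equality in~\eqref{gen_R4}.

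For the inequality, I would first invoke the fact that since $\alpha,\beta\in(T^\ast\mathcal{L}^N)_{1,2}$ are pull-backs of constant 1-forms on $\mathcal{L}^2$, the vector fields $\alpha^\sharp,\beta^\sharp$ are \emph{horizontal lifts} of the corresponding vector fields on $\mathcal{L}^2$; this is exactly what the identification of the horizontal distribution with $(T^\ast\mathcal{L}^N)_{1,2}^\sharp$, together with the matching of metrics stated at the start of the proposition, guarantees. By O'Neill's formula for a Riemannian submersion, the horizontal part of $[\alpha^\sharp,\beta^\sharp]_{\mathcal{L}^N}$ is the horizontal lift of $[\alpha^\sharp,\beta^\sharp]_{\mathcal{L}^2}$, and since the submersion is an isometry on horizontal subspaces, $\|H[\alpha^\sharp,\beta^\sharp]_{\mathcal{L}^N}\|^2=\|[\alpha^\sharp,\beta^\sharp]_{\mathcal{L}^2}\|^2$. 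Splitting the bracket orthogonally into horizontal and vertical parts then gives $\|[\alpha^\sharp,\beta^\sharp]_{\mathcal{L}^N}\|^2=\|[\alpha^\sharp,\beta^\sharp]_{\mathcal{L}^2}\|^2+\|V[\alpha^\sharp,\beta^\sharp]_{\mathcal{L}^N}\|^2\ge\|[\alpha^\sharp,\beta^\sharp]_{\mathcal{L}^2}\|^2$, and multiplying by $-\tfrac34$ reverses the inequality to yield $R_4(\mathcal{L}^N)\le R_4(\mathcal{L}^2)$.

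For the explicit formula I would use $R_4(\mathcal{L}^2)=-\tfrac34\|D(\alpha,\beta)-D(\beta,\alpha)\|^2_{\mathbf{K}^{-1}}$, now on the two-landmark space where only the components $a=1,2$ survive and $\mathbf{K}$ is the $2\times2$ matrix~\eqref{invK}. Setting $L^a:=D^a(\alpha,\beta)-D^a(\beta,\alpha)$ and $M:=2(\gamma_0-\gamma(\rho))\gamma'(\rho)$, formula~\eqref{D12} from Lemma~\ref{Le:FC} gives $L^1=M(\delta\alpha^\parallel\beta_2-\delta\beta^\parallel\alpha_2)$ and $L^2=M(\delta\alpha^\parallel\beta_1-\delta\beta^\parallel\alpha_1)$. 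Passing to means and semi-differences via~\eqref{subs} and abbreviating $P:=\delta\alpha^\parallel\overline\beta-\delta\beta^\parallel\overline\alpha$ and $Q:=\delta\alpha^\parallel\delta\beta-\delta\beta^\parallel\delta\alpha$, these collapse to $L^1=M(P-Q)$ and $L^2=M(P+Q)$, so that $\|L^1\|^2+\|L^2\|^2=2M^2(\|P\|^2+\|Q\|^2)$ and $\langle L^1,L^2\rangle=M^2(\|P\|^2-\|Q\|^2)$. Expanding $\|L\|^2_{\mathbf{K}^{-1}}=(\gamma_0^2-\gamma(\rho)^2)^{-1}[\gamma_0(\|L^1\|^2+\|L^2\|^2)-2\gamma(\rho)\langle L^1,L^2\rangle]$ with these substitutions and simplifying (the term $\|P\|^2-\|Q\|^2$ combines with the diagonal contribution so that $P$ acquires the factor $\gamma_0-\gamma(\rho)$ and $Q$ the factor $\gamma_0+\gamma(\rho)$), one arrives at $-\tfrac34\|L\|^2_{\mathbf{K}^{-1}}=-6\gamma'(\rho)^2[\tfrac{(\gamma_0-\gamma(\rho))^2}{\gamma_0+\gamma(\rho)}\|P\|^2+(\gamma_0-\gamma(\rho))\|Q\|^2]$, which is exactly the stated expression once one notes $\|P\|^2=\|\delta\beta^\parallel\overline\alpha-\delta\alpha^\parallel\overline\beta\|^2$ and $\|Q\|^2=\|\delta\beta^\parallel\delta\alpha-\delta\alpha^\parallel\delta\beta\|^2$.

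The conceptual heart, and the only real obstacle, is the inequality: one must correctly identify the submersion's horizontal distribution and apply O'Neill's formula, the point being that passing from $\mathcal{L}^2$ to $\mathcal{L}^N$ can only add a nonnegative vertical contribution to the squared norm of the bracket, which---because of the sign $-\tfrac34$---lowers the curvature term. The closed-form evaluation is then pure bookkeeping in the means and semi-difference variables, the only thing to watch being the sign-insensitive passage from $P,Q$ to the norms appearing in the statement.
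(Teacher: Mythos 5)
Your proposal is correct and follows essentially the same route as the paper: the inequality comes from the orthogonal splitting of $[\alpha^\sharp,\beta^\sharp]_{\mathcal L^N}$ into a vertical part and a horizontal part that is the lift of $[\alpha^\sharp,\beta^\sharp]_{\mathcal L^2}$, and the explicit formula comes from evaluating $-\tfrac34\|D(\alpha,\beta)-D(\beta,\alpha)\|^2_{\mathbf{K}^{-1}}$ using \eqref{D12}, \eqref{invK} and the substitution \eqref{subs}. The only (cosmetic) difference is that you pass to means and semi-differences before expanding the $\mathbf{K}^{-1}$-norm, so the cross terms diagonalize via the $P\pm Q$ structure, whereas the paper expands first in $\alpha_1,\alpha_2,\beta_1,\beta_2$ and substitutes afterwards.
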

\begin{proof}
Firstly, note that $[\alpha^\sharp, \beta^\sharp]_{\mathcal L^N}$ breaks into perpendicular parts: a vertical part in the kernel of $d\pi$ and a horizontal part which is simply the horizontal lift of 
$[\alpha^\sharp, \beta^\sharp]_{\mathcal L^2}$. This explains the inequality assertion in Proposition~\ref{L2RD}. 
To calculate $R_4(\mathcal L^2)$, we use the last expression in \eqref{gen_R4}, i.e.
\begin{align*}
R_4(\mathcal{L}^2)
&=
\textstyle
-\frac{3}{4}
\sum_{a,b=1}^2
\big\langle
D^a(\alpha,\beta)-D^a(\beta,\alpha),
D^b(\alpha,\beta)-D^b(\beta,\alpha)
\big\rangle\,
(\mathbf{ K}^{-1})_{ab}
\\
&=
-3\, \frac{\gamma_0-\gamma(\rho)}{\gamma_0+\gamma(\rho)}
\,
\gamma'(\rho)^2
\Big\{\gamma_0
\Big[
\|
\delta\alpha^\parallel \beta_1 
-
\delta\beta^\parallel \alpha_1
\|^2 
+
\|
\delta\alpha^\parallel \beta_2 
-
\delta\beta^\parallel \alpha_2
\|^2 \Big]
\\&\qquad\qquad\qquad\qquad\qquad\;\;
-2\gamma(\rho)
\big\langle
\delta\alpha^\parallel \beta_1 
-
\delta\beta^\parallel \alpha_1
,
\delta\alpha^\parallel \beta_2 
-
\delta\beta^\parallel \alpha_2
\big\rangle
\Big\},
\end{align*}
where we have used~\eqref{invK} and~\eqref{D12}. 
The final result follows after
inserting~\eqref{subs} into the above expression
and performing  
some algebra.
%
%
\end{proof}
Note that all terms in Corollary \ref{2ptR123} and Proposition \ref{L2RD} are very similar. In fact, they are all ``components'' of the norm 
$\|\alpha \wedge \beta\|^2$ of the 2-form whose sectional curvature is being computed. First note that we can decompose $T^*\mathcal L^2$ into the direct sum of three pieces, namely:
\begin{align*}
\delta^\parallel T^*\mathcal L^2
&:=
\big\{
(au,-au)\,\big|\,a\in\mathbb{R}
\big\},
&
&
\dim \big(
\delta^\parallel T^*\mathcal L^2\big)
=1,
\\
\delta^\perp T^*\mathcal L^2
&:=
\big\{
(p,-p)
\,\big|\,
p\in\mathbb{R}^D,p\perp u
\big\},
&
&
\dim \big(
\delta^\perp T^*\mathcal L^2
\big)
=D-1,
\\
\overline T^* \mathcal L^2&:=
\big\{
(p,p)
\,\big|\,
p\in\mathbb{R}^D
\big\},
&
&
\dim\big( 
\overline T^* \mathcal L^2
\big)=D,
\end{align*}
where as usual~$u:=\frac{q^1-q^2}{\|q^1-q^2\|}$
(see Figure~\ref{fig3spaces}).
Note that these three subspaces  are orthogonal
with respect to the cometric by virtue of~\eqref{2pt_com_1}.
An arbitrary covector~$\alpha=(\alpha_1,\alpha_2)\in 
T^\ast_q\mathcal{L}^2$ can be uniquely 
decomposed into the summation
$\alpha=\alpha_{(1)}+\alpha_{(2)}+\alpha_{(3)}$, with:
\begin{align}
\label{decomp}
\alpha_{(1)}
&
:=
(
\delta\alpha^\parallel u
,
-
\delta\alpha^\parallel u
)
\in\delta^\parallel T^\ast \mathcal{L}^2,
& 
\alpha_{(2)}
&
:=
(\delta\alpha^\perp,-\delta\alpha^\perp)
\in\delta^\perp T^\ast \mathcal{L}^2,
&
\alpha_{(3)}
&
:=
(\overline\alpha,\overline\alpha)
\in \overline{T}^\ast \mathcal{L}^2.
\end{align}
So it is the case that: (i)
$\alpha\in \delta^\parallel T^\ast \mathcal{L}^2
 \Leftrightarrow \delta^\perp\alpha=0$ and $\overline{\alpha} =0$;
(ii)
$\alpha\in \delta^\perp T^\ast \mathcal{L}^2
 \Leftrightarrow \delta^\parallel\alpha=0$  and $\overline{\alpha} =0$;
(iii)
$\alpha\in  \overline{T}^\ast \mathcal{L}^2
 \Leftrightarrow \delta\alpha^\parallel=0$ and $\delta\alpha^\perp=0$.
%
\begin{figure}[t]
\begin{center}
\begin{picture}(120,60)
\setlength{\unitlength}{1pt}
\qbezier[80](0,20)(60,20)(120,20)
\put( 40,20){\makebox(0,0){\Large$\bullet$}}
\put( 80,20){\makebox(0,0){\Large$\bullet$}}
\put(44,15){\makebox(0,0)[t]{\small$q^1$}}
\put(75,15){\makebox(0,0)[t]{\small$q^2$}}
\put(20,25){\makebox(0,0)[b]{\small$\alpha_1$}}
\put(105,25){\makebox(0,0)[b]{\small$\alpha_2$}}
\put(60,65){\makebox(0,0){
$\boxed{
\;\delta^\parallel T^\ast_q\mathcal{L}
}$
}}
\thicklines
\put(40,20){\vector(-1,0){28}} %
\put(80,20){\vector(1,0){28}} %
\end{picture}
\,\,\,\,
\,\,\,\,
\,\,
\begin{picture}(120,60)
\setlength{\unitlength}{1pt}
\qbezier[80](0,20)(60,20)(120,20)
\put( 40,20){\makebox(0,0){\Large$\bullet$}}
\put( 80,20){\makebox(0,0){\Large$\bullet$}}
\put(41,26){\makebox(0,0)[b]{\small$q^1$}}
\put(80,15){\makebox(0,0)[t]{\small$q^2$}}
\put(37,-5){\makebox(0,0)[r]{\small$\alpha_1$}}
\put(84,45){\makebox(0,0)[l]{\small$\alpha_2$}}
\put(60,65){\makebox(0,0){
$\boxed{
\;\delta^\perp T^\ast_q\mathcal{L}
}$
}}
\thicklines
\put(40,20){\vector(0,-1){28}} %
\put(80,20){\vector(0,1){28}} %
\end{picture}
\,\,
\,\,\,\,
\,\,\,\,
\begin{picture}(120,60)
\setlength{\unitlength}{1pt}
\qbezier[80](0,20)(60,20)(120,20)
\put( 40,20){\makebox(0,0){\Large$\bullet$}}
\put( 80,20){\makebox(0,0){\Large$\bullet$}}
\put(40,15){\makebox(0,0)[t]{\small$q^1$}}
\put(80,15){\makebox(0,0)[t]{\small$q^2$}}
\put(20,39){\makebox(0,0){\small$\alpha_1$}}
\put(60,39){\makebox(0,0){\small$\alpha_2$}}
\put(60,65){\makebox(0,0){
$\boxed{
\;\overline{T}^\ast_q\mathcal{L}
}$
}}
\thicklines
\put(40,20){\vector(-2,1){28}} %
\put(80,20){\vector(-2,1){28}} %
\end{picture}
\end{center}
\caption{Typical covectors~$\alpha=(\alpha_1,\alpha_2)$
in spaces~$\delta^\parallel T^\ast_q\mathcal{L}$,
$\delta^\perp T^\ast_q\mathcal{L}$,
and
$\overline{T}^\ast_q\mathcal{L}$,
for $\mathcal{L}=\mathcal{L}^2(\mathbb{R}^2)$.}
\label{fig3spaces}
\end{figure} 
\par
Consequently the space of 2-forms $\bigwedge^2 T^*\mathcal L^2$ decomposes into the direct sum of five pieces:
\begin{align*}
\bigwedge^2 T^*\mathcal L^2 &\;=\; \bigoplus_{i=1}^5V_i,
\mbox{ with: }
&
V_1
&\;:=\;
\delta^\parallel T^*\mathcal L^2 
\;\wedge\; 
\overline T^*\mathcal L^2,
\\
V_2
&\;:=\;
\delta^\bot T^*\mathcal L^2 
\;\wedge\;  
\overline T^*\mathcal L^2,
&
V_3
&\;:=\;
\delta^\parallel T^*\mathcal L^2 
\;\wedge\; 
\delta^\bot T^*\mathcal L^2,
\\
V_4
&\;:=\;
\bigwedge^2\big( \delta^\bot T^*\mathcal L^2\big),
&
V_5
&\;:=\;
\bigwedge^2\big( \overline T^*\mathcal L^2\big).
\end{align*}
(Since $\delta^\parallel T^*\mathcal L^2$ is one-dimensional it creates no 2-forms.)
Once again, note that the spaces~$V_1,\ldots,V_5$ are pairwise 
\em  orthogonal \em with respect to the inner product
\begin{equation}
\label{2formprod}
\big\langle
\alpha\wedge\beta
,
\xi\wedge\eta
\big\rangle_{\bigwedge^2 T^*\mathcal L^2}
:=
\big
\langle
\alpha
,
\xi
\big\rangle_{T^*\mathcal L^2}
\big
\langle
\beta
,
\eta
\big\rangle_{T^*\mathcal L^2}
-
\big
\langle
\alpha
,
\eta
\big\rangle_{T^*\mathcal L^2}
\big
\langle
\beta
,
\xi
\big\rangle_{T^*\mathcal L^2},
\quad
\alpha,\beta,\xi,\eta\in T^\ast\mathcal{L}^2
\end{equation}
by the orthogonality of~$\delta^\parallel T^\ast_q\mathcal{L}$,
$\delta^\perp T^\ast_q\mathcal{L}$,
and
$\overline{T}^\ast_q\mathcal{L}$.
Any 2-form $\alpha\wedge \beta$ then decomposes into the sum of its five projections onto these subspaces and its norm squared is the sum of the norm squared of these components. Let us first give the five pieces of its norm names:
\begin{align*}
& & 
T_1 &:= \|\delta\beta^\parallel u\otimes\overline\alpha 
-\delta\alpha^\parallel u \otimes\overline\beta\|^2 ,
\\
T_2 &:= \|\delta\beta^\bot \otimes \overline\alpha 
-\delta\alpha^\bot \otimes \overline\beta\|^2 ,
&
T_3 &:= \|\delta\beta^\parallel  u\otimes \delta\alpha^\bot 
-\delta\alpha^\parallel u \otimes \delta\beta^\bot\|^2 , 
\\
T_4 &:= \|\delta\beta^\bot \otimes \delta\alpha^\bot 
-\delta\alpha^\bot \otimes \delta\beta^\bot\|^2 ,
&
T_5 &:= \|\overline\beta \otimes \overline\alpha 
-\overline\alpha \otimes \overline\beta\|^2.
\end{align*}
In the above definitions~$\|\;\;\;\|$ indicates the Euclidean norm.
We have to be careful here: we have been using Euclidean norms in $\mathbb R^D$ in all our formulas above and now we are dealing with norms in $T^*\mathcal L^2$; these essentially differ only by a factor, by \eqref{2ptcometric}. More precisely, the following result holds:
%
\begin{proposition}
The denominator of the sectional curvature~\eqref{seccurv} for~$\mathcal{L}^2(\mathbb{R}^2)$
can be written as:
\begin{equation}
\label{curvdenom} 
\|\alpha\wedge\beta\|_{\bigwedge^2 T^*\mathcal L^2}^2 = 
4\big(\gamma_0^2-\gamma(\rho)^2\big) (T_1+T_2) 
+2\big(\gamma_0-\gamma(\rho)\big)^2(2T_3+T_4)
+2\big(\gamma_0+\gamma(\rho)\big)^2 T_5. 
\end{equation}
\end{proposition}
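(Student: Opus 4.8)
The plan is to compute $\|\alpha\wedge\beta\|^2=\langle\alpha\wedge\beta,\alpha\wedge\beta\rangle$ directly from the inner product \eqref{2formprod}, exploiting the orthogonal splitting of $\bigwedge^2 T^*\mathcal L^2$ into the five pieces $V_1,\dots,V_5$. First I would substitute the decompositions $\alpha=\alpha_{(1)}+\alpha_{(2)}+\alpha_{(3)}$ and $\beta=\beta_{(1)}+\beta_{(2)}+\beta_{(3)}$ of \eqref{decomp} into $\alpha\wedge\beta$ and expand bilinearly into nine wedges. The term $\alpha_{(1)}\wedge\beta_{(1)}$ vanishes because $\delta^\parallel T^*\mathcal L^2$ is one-dimensional, and the remaining eight group into the five subspaces: $\alpha_{(1)}\wedge\beta_{(3)}+\alpha_{(3)}\wedge\beta_{(1)}\in V_1$, $\alpha_{(2)}\wedge\beta_{(3)}+\alpha_{(3)}\wedge\beta_{(2)}\in V_2$, $\alpha_{(1)}\wedge\beta_{(2)}+\alpha_{(2)}\wedge\beta_{(1)}\in V_3$, $\alpha_{(2)}\wedge\beta_{(2)}\in V_4$, and $\alpha_{(3)}\wedge\beta_{(3)}\in V_5$. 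Since the $V_i$ are pairwise orthogonal, $\|\alpha\wedge\beta\|^2$ is the sum of the squared norms of these five projections, and it remains to evaluate each one and match it to the corresponding $T_i$.

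The only metric input required is that, by \eqref{2ptcometric}, the cometric acts as the scalar $2(\gamma_0+\gamma(\rho))$ on $\overline T^*\mathcal L^2$ and as the scalar $2(\gamma_0-\gamma(\rho))$ on both $\delta^\parallel T^*\mathcal L^2$ and $\delta^\perp T^*\mathcal L^2$. For the two diagonal pieces both wedge factors lie in one subspace, so $\|\alpha_{(3)}\wedge\beta_{(3)}\|^2=[2(\gamma_0+\gamma(\rho))]^2(\|\overline\alpha\|^2\|\overline\beta\|^2-\langle\overline\alpha,\overline\beta\rangle^2)$, and similarly for $V_4$; comparing with $T_5=2(\|\overline\alpha\|^2\|\overline\beta\|^2-\langle\overline\alpha,\overline\beta\rangle^2)$ and the analogous $T_4$ yields the coefficients $2(\gamma_0+\gamma(\rho))^2$ and $2(\gamma_0-\gamma(\rho))^2$. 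For the mixed pieces $V_1,V_2$ the two factors carry different scalars, so expanding $\|P\wedge Q+R\wedge S\|^2$ through \eqref{2formprod} and using the cross-orthogonality $\langle P,R\rangle=\langle Q,S\rangle=0$ leaves the product scalar $4(\gamma_0^2-\gamma(\rho)^2)$ times exactly $T_1$ (resp.\ $T_2$). Finally, for $V_3$ I would factor out the generator $(u,-u)$ of the one-dimensional space $\delta^\parallel T^*\mathcal L^2$, writing the $V_3$ component as $(u,-u)\wedge w$ with $w\in\delta^\perp T^*\mathcal L^2$ perpendicular to it, so that $\|V_3\|^2=[2(\gamma_0-\gamma(\rho))]^2\,T_3=2(\gamma_0-\gamma(\rho))^2\cdot 2T_3$.

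I expect the main obstacle to be the careful bookkeeping of the factors of two that distinguish the five cases, since there are two logically independent sources of such factors that must not be conflated. The first is the antisymmetrization built into each $T_i$: in the diagonal cases $V_4,V_5$ the two slots carry vectors of the same type, so $T_4,T_5$ each already contain a factor $2$ from the wedge, which halves the naive $[2(\cdots)]^2$ coefficient. The second is the collapse of the mixed wedge in $V_3$ into an ordinary product, forced by the one-dimensionality of $\delta^\parallel T^*\mathcal L^2$, which is precisely what turns a bare $T_3$ into the $2T_3$ of the statement. Landing these two effects on the stated coefficients $4(\gamma_0^2-\gamma(\rho)^2)$, $2(\gamma_0-\gamma(\rho))^2$, and $2(\gamma_0+\gamma(\rho))^2$ is the one genuinely error-prone step; everything else is routine bilinear expansion. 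The argument is in fact valid for general $D$, and the stated $\mathcal L^2(\mathbb R^2)$ case is recovered by observing that $V_4$ collapses (and $T_4=0$) as soon as $\delta^\perp T^*\mathcal L^2$ is one-dimensional.
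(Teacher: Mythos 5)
Your proposal is correct and follows essentially the same route as the paper's own proof: decompose $\alpha\wedge\beta$ via \eqref{decomp} into its five components in the pairwise orthogonal subspaces $V_1,\dots,V_5$, sum the squared norms, and evaluate each using \eqref{2formprod} together with the fact that the cometric \eqref{2ptcometric} acts by the scalars $2(\gamma_0\pm\gamma(\rho))$ on the three summands of $T^*\mathcal L^2$. Your bookkeeping of the factors of two (the built-in factor $2$ in $T_4,T_5$, the product scalar $4(\gamma_0^2-\gamma(\rho)^2)$ on the mixed pieces, and the factoring of $(u,-u)$ out of the $V_3$ component to produce $2T_3$) is accurate, so the argument lands exactly on \eqref{curvdenom}.
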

\begin{proof}
We may apply decomposition~\eqref{decomp}
to both~$\alpha=\sum_{i=1}^3\alpha_{(i)}$
and~$\beta=\sum_{i=1}^3\beta_{(i)}$, and write
\begin{align*}
\alpha\wedge\beta 
=
&
\big(
\alpha_{(1)}\wedge\beta_{(3)}
-
\beta_{(1)}\wedge\alpha_{(3)}
\big)
+
\big(
\alpha_{(2)}\wedge\beta_{(3)}
-
\beta_{(2)}\wedge\alpha_{(3)}
\big)
\\
&+
\big(
\alpha_{(1)}\wedge\beta_{(2)}
-
\beta_{(1)}\wedge\alpha_{(2)}
\big)
+
\alpha_{(2)}\wedge\beta_{(2)}
+
\alpha_{(3)}\wedge\beta_{(3)},
\end{align*}
where the five summands on the right-hand side belong to
$V_1,\ldots,V_5$ respectively.
We have
\begin{align*}
\|&\alpha_{(1)}\wedge\beta_{(3)}
-
\beta_{(1)}\wedge\alpha_{(3)}
\|^2_{\bigwedge^2 T^*\mathcal L^2}\,\,=
\\
&
=
\big\|\alpha_{(1)}\wedge\beta_{(3)}
\|^2_{\bigwedge^2 T^*\mathcal L^2}
+
\big\|
\beta_{(1)}\wedge\alpha_{(3)}
\|^2_{\bigwedge^2 T^*\mathcal L^2}
-2
\langle
\alpha_{(1)}\wedge\beta_{(3)}
,
\beta_{(1)}\wedge\alpha_{(3)}
\rangle_{\bigwedge^2 T^*\mathcal L^2}
\\
&\stackrel{(\ast)}{=}
4\big(\gamma_0^2-\gamma(\rho)^2\big)
\big[
(\delta\alpha^\parallel)^2\|\overline\beta\|^2
+
(\delta\beta^\parallel)^2\|\overline\alpha\|^2
-2\,
\delta\alpha^\parallel 
\delta\beta^\parallel 
\langle\alpha,\beta\rangle
\big]
=
4\big(\gamma_0^2-\gamma(\rho)^2\big)\,T_1,
\end{align*}
where we have used \eqref{2formprod}
and \eqref{2ptcometric} in step~($\ast$).
The square norm of the remaining four terms is computed similarly.
Orthogonality of $V_1,\ldots,V_5$ finally yields~\eqref{curvdenom}.
\end{proof}
\par
To express the formulas for the \em numerator \em of sectional curvature succinctly, let us also introduce abbreviations for the coefficients involving $\gamma$:
\begin{equation}
\label{def_ks}
\begin{aligned}
k_1(\rho) &:= \big(\gamma_0-\gamma(\rho)\big)^2 \gamma''(\rho),
&\qquad
k_2(\rho) &:= \big(\gamma_0-\gamma(\rho)\big)^2 \frac{\gamma'(\rho)}{\rho}, \\
k_3(\rho) &:= \big(\gamma_0-\gamma(\rho)\big)\gamma'(\rho)^2,  
&
k_4(\rho) &:= \frac{\big(\gamma_0-\gamma(\rho)\big)^2}{\gamma_0+\gamma(\rho)}
\gamma'(\rho)^2 .
\end{aligned}
\end{equation}
%
Note that $k_1$, $k_2$, $k_3$ and $k_4$ are all homogeneous of degree 3 in $\gamma$ and degree $-2$ in the distance $\rho$ or $d\rho$ on $\mathcal L^N$. Moreover $k_2$ is negative, $k_3$ and $k_4$ are positive, while $k_1$ may be positive or negative. For all $\gamma$ of interest, $\gamma'$ is everywhere negative, starting at 0 decreasing to a minimum at some $\rho_0$, then increasing back to 0 at $\infty$. Then $k_1$ is negative for $\rho < \rho_0$ and positive for $\rho > \rho_0$. 
\par
The following equalities are proven by direct computation:
\begin{align*}
\|
\delta\beta \otimes \overline\alpha 
-
\delta\alpha \otimes \overline\beta
\|^2
&=
T_1+T_2,
\\
\|
\delta\beta^\perp \otimes \delta\alpha 
-
\delta\alpha^\perp \otimes \delta\beta
\|^2
&=
T_3+T_4,
\\
\|
\delta\beta \otimes \delta\alpha 
-
\delta\alpha \otimes \delta\beta
\|^2
&=
2T_3+T_4.
\end{align*}
Inserting notation~\eqref{def_ks} and the above equalities into
Propositions~\ref{ThCurvLM2} and~\ref{L2RD}
immediately yields:
%
\begin{figure}[t]
\epsfig{width=3in,height=1.8in,file=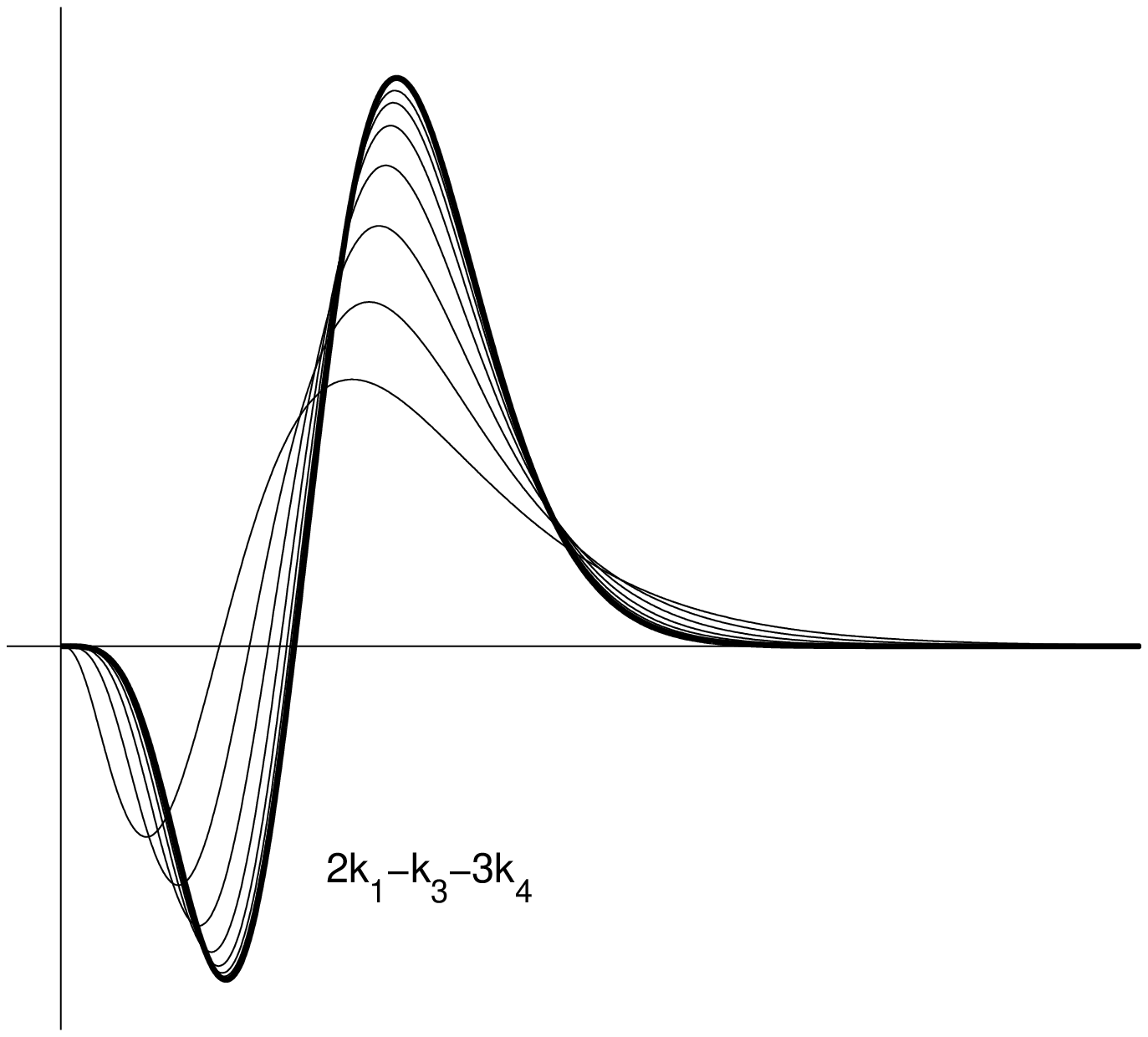}
\epsfig{width=2.9in,height=1.8in,file=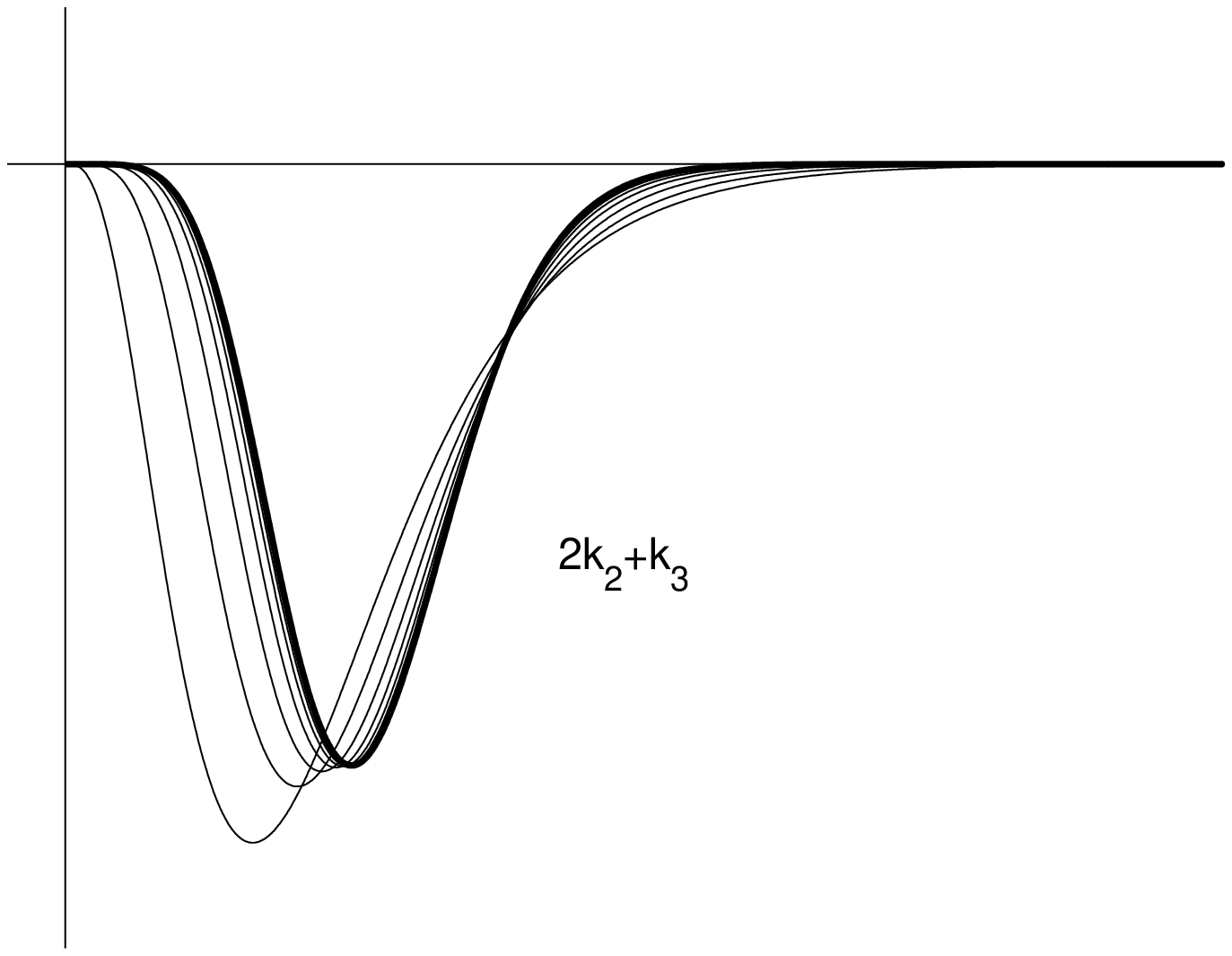}
\\ 
\epsfig{width=3in,height=1.8in,file=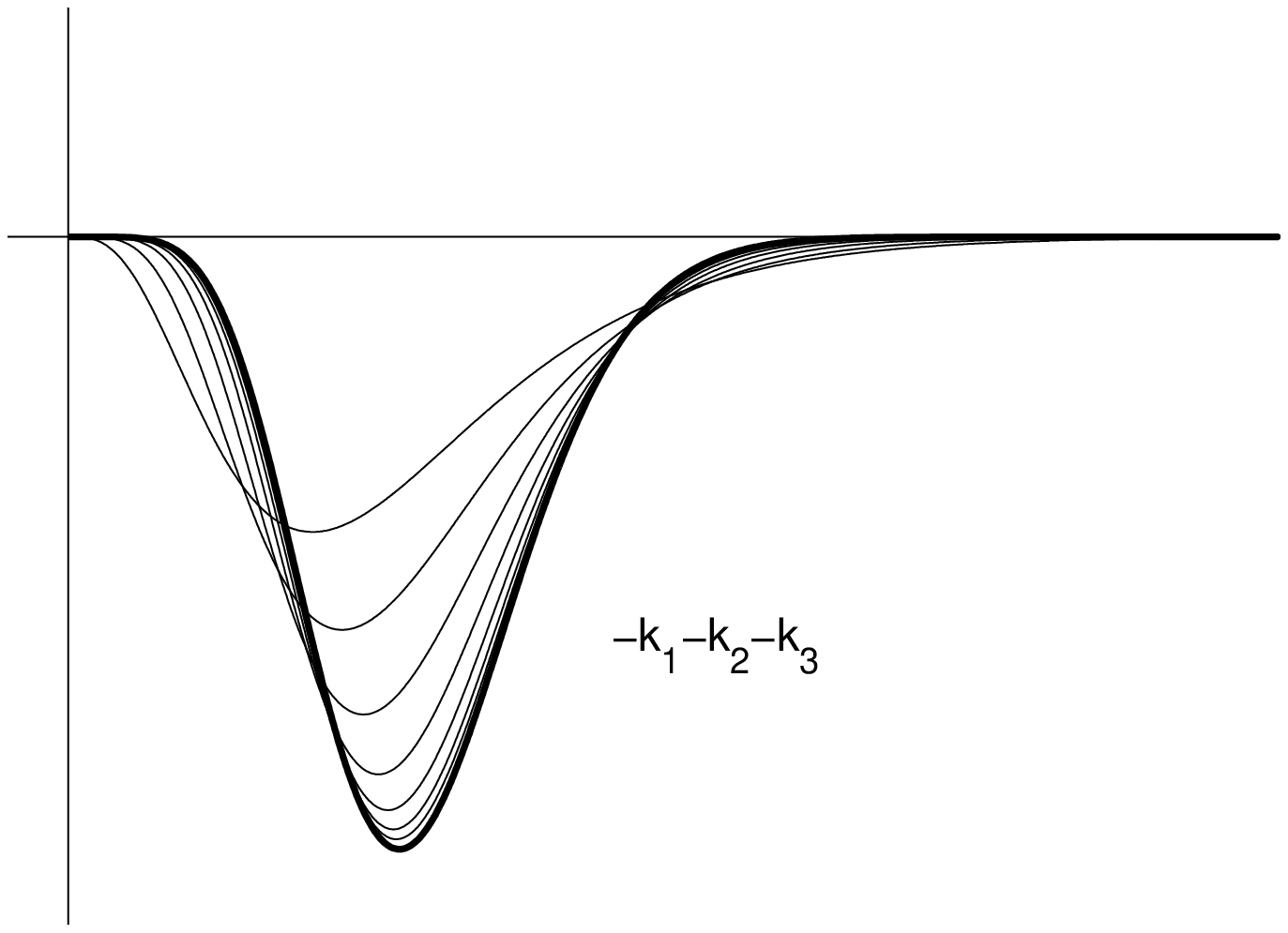}
\epsfig{width=2.9in,height=2in,file=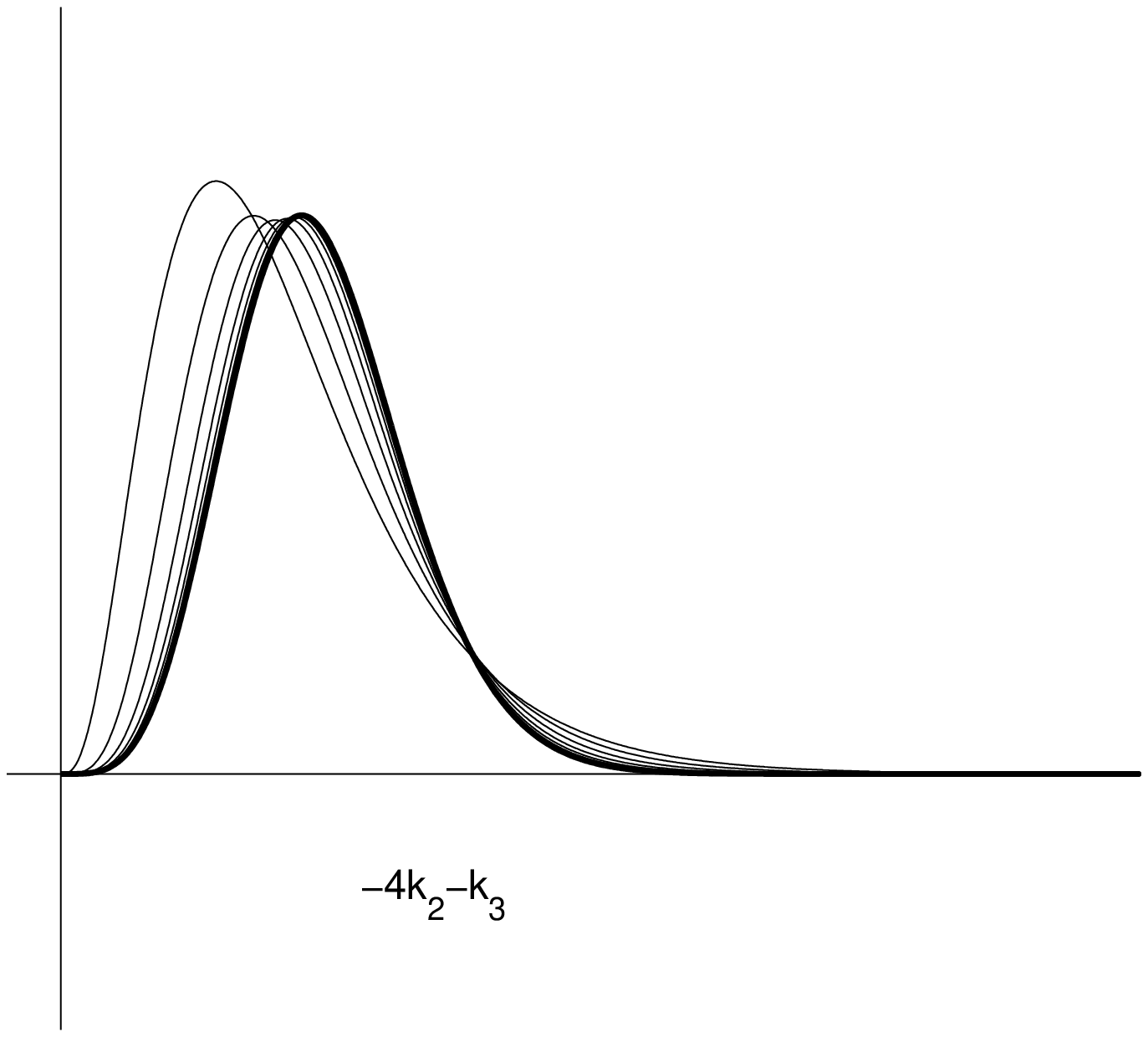}
\caption{The coefficients of $T_1$ (top left), $T_2$ (top right), $T_3$ (bottom left) and $T_4$ (bottom right) for the Bessel kernels $\gamma$ (shown with thin lines) and the Gaussian kernel (shown with the thick line). The kernels are scaled to normalize $\gamma(0)$ and $\gamma''(0)$.}
\label{fourcoeff}
\end{figure}
\begin{proposition}
We can write the terms in the numerator of sectional curvature for $\mathcal L^2(\mathbb{R}^D)$ as:
\begin{equation}
\label{R_final}
\begin{aligned}
R_1 &= 4k_1(T_1-T_3) + 4k_2(T_2-T_3-T_4), &\qquad
R_2 &= -4k_3 (T_1-T_3),\\
R_3 &= k_3(2(T_1+T_2)-2T_3-T_4-T_5), &
R_4 
&=-6(k_3 T_3 + k_4 T_1),
\end{aligned}
\end{equation}
%
hence $R=R(\alpha^\sharp,\beta^\sharp,\beta^\sharp,\alpha^\sharp)
=\sum_{i=1}^4R_i$
may be expressed as:
\begin{equation} \label{2ptcurvature}
R 
=
2\big(2k_1-k_3-3k_4\big)T_1 + 2\big(2k_2+k_3\big)T_2
 +4\big(-k_1-k_2-k_3\big)T_3 +\big(-4k_2-k_3\big)T_4 -k_3T_5.\end{equation}
\end{proposition}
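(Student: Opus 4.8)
The plan is to read off the formulas \eqref{R_final} and \eqref{2ptcurvature} by purely algebraic substitution, since all the analytic work has already been done in the earlier results. First I would take the means-and-semi-differences forms of $R_1$, $R_2$, $R_3$ recorded in Corollary~\ref{2ptR123} (the rewriting of Proposition~\ref{ThCurvLM2}) together with the value of $R_4(\mathcal L^2)$ from Proposition~\ref{L2RD}. Comparing the scalar prefactors against the abbreviations \eqref{def_ks}, I recognize each factor $(\gamma_0-\gamma(\rho))^2\gamma''(\rho)$, $(\gamma_0-\gamma(\rho))^2\gamma'(\rho)/\rho$, $(\gamma_0-\gamma(\rho))\gamma'(\rho)^2$ and $(\gamma_0-\gamma(\rho))^2\gamma'(\rho)^2/(\gamma_0+\gamma(\rho))$ as $k_1,k_2,k_3,k_4$ respectively; this identification is immediate.

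The one substantive step is the dictionary that converts the squared norms appearing in Corollary~\ref{2ptR123} and Proposition~\ref{L2RD} into the five building blocks $T_1,\dots,T_5$. Because $u$ is a unit vector, tensoring against $u$ preserves the Euclidean norm, so $\|\delta\beta^\parallel\overline\alpha-\delta\alpha^\parallel\overline\beta\|^2=T_1$ and $\|\overline\beta\otimes\overline\alpha-\overline\alpha\otimes\overline\beta\|^2=T_5$. For the mixed $\delta$-terms I would expand $\delta\alpha=\delta\alpha^\parallel u+\delta\alpha^\perp$ and likewise for $\delta\beta$: the parallel contributions cancel, $\delta\beta^\parallel\delta\alpha-\delta\alpha^\parallel\delta\beta=\delta\beta^\parallel\delta\alpha^\perp-\delta\alpha^\parallel\delta\beta^\perp$, whence $\|\delta\beta^\parallel\delta\alpha-\delta\alpha^\parallel\delta\beta\|^2=T_3$. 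The remaining three norms, namely $\|\delta\beta\otimes\overline\alpha-\delta\alpha\otimes\overline\beta\|^2=T_1+T_2$, $\|\delta\beta^\perp\otimes\delta\alpha-\delta\alpha^\perp\otimes\delta\beta\|^2=T_3+T_4$, and $\|\delta\beta\otimes\delta\alpha-\delta\alpha\otimes\delta\beta\|^2=2T_3+T_4$, are exactly the identities proved by direct computation just before the proposition. Feeding all of these, line by line, into the four formulas produces \eqref{R_final}.

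Finally I would set $R=\sum_{i=1}^4 R_i$ and collect the coefficient of each $T_i$. Grouping gives the $T_1$-coefficient $4k_1-4k_3+2k_3-6k_4=2(2k_1-k_3-3k_4)$, the $T_2$-coefficient $4k_2+2k_3=2(2k_2+k_3)$, the $T_3$-coefficient $-4k_1-4k_2+4k_3-2k_3-6k_3=4(-k_1-k_2-k_3)$, the $T_4$-coefficient $-4k_2-k_3$, and the $T_5$-coefficient $-k_3$, which is precisely \eqref{2ptcurvature}. Since every ingredient is already in hand, there is no genuine analytic obstacle; the only thing to watch is the bookkeeping in this last collection, in particular keeping track of the four separate $T_3$ contributions (one each from $R_1,R_2,R_3,R_4$) and noting that $T_5$ enters only through $R_3$.
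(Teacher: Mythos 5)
Your proposal is correct and is essentially identical to the paper's own argument: the paper likewise obtains \eqref{R_final} by substituting the abbreviations \eqref{def_ks} and the norm identities (e.g.\ $\|\delta\beta\otimes\delta\alpha-\delta\alpha\otimes\delta\beta\|^2=2T_3+T_4$) into Corollary~\ref{2ptR123} and Proposition~\ref{L2RD}, and then sums the four terms to get \eqref{2ptcurvature}. Your bookkeeping of the coefficients (including the four separate $T_3$ contributions) checks out, and your explicit verification of the identities $\|\delta\beta^\parallel\overline\alpha-\delta\alpha^\parallel\overline\beta\|^2=T_1$ and $\|\delta\beta^\parallel\delta\alpha-\delta\alpha^\parallel\delta\beta\|^2=T_3$ is a small but welcome addition the paper leaves implicit.
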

\par
By virtue of Proposition~\ref{L2RD}  the above proposition still holds
in the case of~$\mathcal{L}^N(\mathbb{R}^D)$
as long as~$\alpha,\beta\in(T_q^\ast\mathcal{L})_{1,2}$ and the equality signs for~$R_4$ in~\eqref{R_final}
and~$R$ in~\eqref{2ptcurvature} are substituted  by ``$\leq$''.
The coefficients in~\eqref{2ptcurvature} may have all sorts of signs for peculiar kernels. However, the kernels~$\gamma$ of interest are the Bessel kernels \eqref{Besselkernel} and the Gaussian kernel, which is their asymptotic limit as their order goes to infinity. The coefficients for these kernels are shown in Figure \ref{fourcoeff}. We see that the coefficients of $T_2$ and $T_3$ are negative while those of $T_4$ are positive. {\it Henceforth, we assume we have a kernel for which this is true.}

\subsection{Sectional curvature of~$\mathcal{L}^2(\mathbb{R}^1)$}
\label{sectionL21}
\begin{figure}[t]
\center{\includegraphics[height=7.2cm]{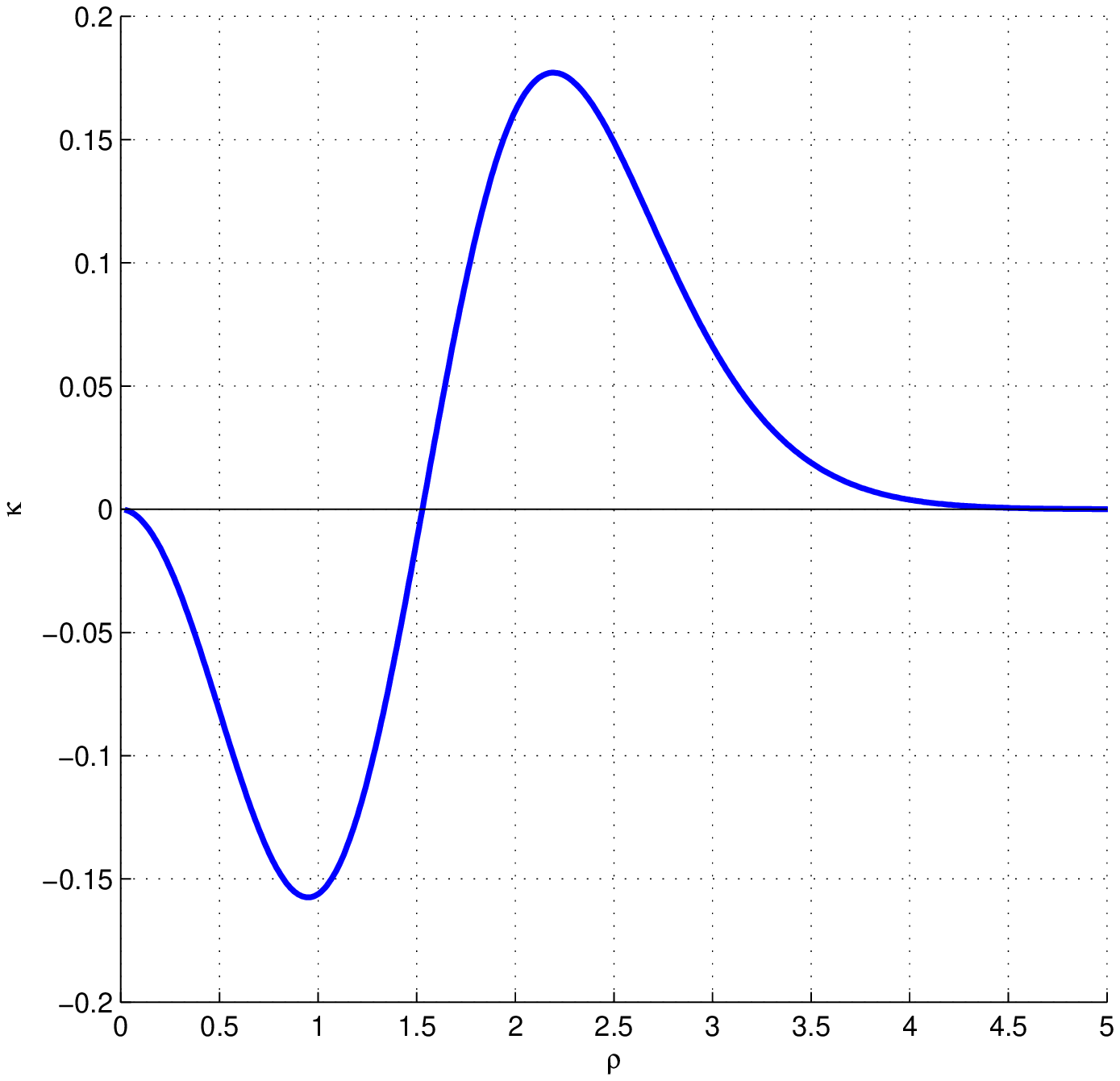}
\;
\includegraphics[height=7.2cm]{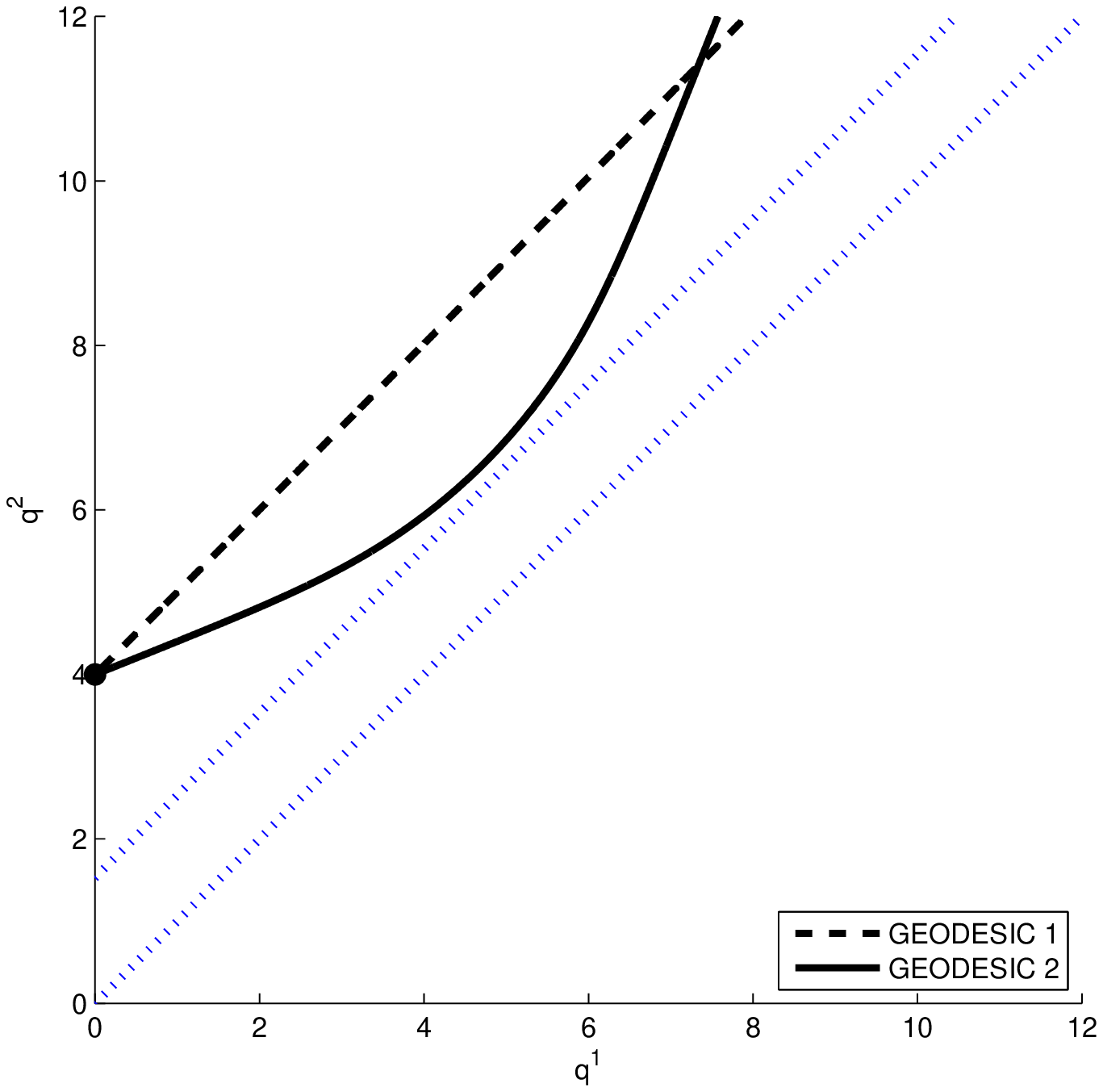}
}
\caption{
Left: sectional curvature~$\mathcal{K}$ 
for~~$\mathcal{L}^2(\mathbb{R}^1)$
(from Proposition~\ref{scurv_L2R1}),
as a function of~$\rho=|q^1-q^2|$;
here $\gamma(x)=
\exp(-\frac{1}{2}x^2)$.
Right: two trajectories in~$\mathcal{L}^2(\mathbb{R}^1)$
shown in the~$(q^1,q^2)$ plane (under the assumption that~$q^1<q^2$).
Both geodesics originate at $(q^1,q^2)=(0,4)$, and 
lie in the region where~$\mathcal{K}>0$ (above the upper dotted
 line, that indicates the zero of~$\mathcal{K}$ 
 at~$|q^1-q^2|\simeq1.53$); they have different 
initial momenta
$(p_1,p_2)=(1,1)$ and $(p_1,p_2)=(1,0.4)$, and exhibit conjugate points.
}\label{fig_L2R1}
\end{figure} 
Finally, we will now explore the important example of two landmarks on the real line. In this particular case the manifold is two dimensional, so 
sectional curvature~$\mathcal{K}$ will turn out to be independent of cotangent vectors~$\alpha$ and~$\beta$. In fact, given the translation invariance of the metric tensor, it will only depend on the 
distance~$\rho=|q^1-q^2|$ between the two landmarks.

The spaces $\delta^\parallel T^\ast \mathcal L^2(\mathbb R^1)$ and $\overline T^\ast \mathcal L^2(\mathbb R^1)$ are one-dimensional while $\delta^\bot T^\ast \mathcal L^2(\mathbb R^1) = \{0\}$. Thus
 $$
 \bigwedge^2 T^\ast \mathcal L^2(\mathbb R^1) 
 \;= \;
 \delta^\parallel T^\ast \mathcal L^2(\mathbb R^1)\; \wedge\; \overline T^\ast \mathcal L^2(\mathbb R^1)
 $$
and the only non-zero term in \eqref{2ptcurvature} is $T_1$. Therefore combining formulas \eqref{curvdenom} and \eqref{2ptcurvature} we get:

\begin{proposition}
\label{scurv_L2R1}
The sectional curvature of~$\mathcal{L}^2(\mathbb{R}^1)$
is given by
$$\mathcal{K}= \frac{2k_1-k_3-3k_4}{2(\gamma_0^2-\gamma(\rho)^2)} = 
\frac{\gamma_0-\gamma(\rho)}{\gamma_0+\gamma(\rho)}\,\gamma''_{12}
-\frac{2\gamma_0-\gamma(\rho)}{(\gamma_0+\gamma(\rho))^2}
\big(\gamma'(\rho)\big)^2.$$ 
\end{proposition}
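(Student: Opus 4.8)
The plan is to use the collapse of the ambient dimension to reduce the general two-momentum curvature expression to a single surviving term. First I would observe that for $D=1$ there is no direction orthogonal to $u$, so $\delta^\perp T^\ast\mathcal L^2(\mathbb R^1)=\{0\}$ and the five-piece decomposition $\bigwedge^2 T^\ast\mathcal L^2=\bigoplus_{i=1}^5 V_i$ degenerates: the spaces $V_2,V_3,V_4$ each contain a $\delta^\perp$-factor and hence vanish, while $V_5=\bigwedge^2\overline T^\ast\mathcal L^2$ vanishes because $\overline T^\ast\mathcal L^2$ is one-dimensional. Concretely this forces $T_2=T_3=T_4=0$ (each carries a $\delta\alpha^\perp$ or $\delta\beta^\perp$) and $T_5=0$, since $\overline\alpha,\overline\beta$ are collinear so that $\overline\beta\otimes\overline\alpha-\overline\alpha\otimes\overline\beta=0$. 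Only $T_1$ survives.

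Next I would read off the numerator and denominator from the two-momentum formulas already established. Setting $T_2=T_3=T_4=T_5=0$ in \eqref{2ptcurvature} leaves $R=2(2k_1-k_3-3k_4)\,T_1$, and the same substitution in \eqref{curvdenom} leaves $\|\alpha\wedge\beta\|^2=4(\gamma_0^2-\gamma(\rho)^2)\,T_1$. Because $\mathcal L^2(\mathbb R^1)$ is two-dimensional, any pair of non-parallel covectors spans the \emph{same} 2-plane and $T_1\neq 0$, so the factor $T_1$ cancels in the ratio $\mathcal K=R/\|\alpha\wedge\beta\|^2$. This yields the first displayed equality
$$\mathcal K=\frac{2k_1-k_3-3k_4}{2(\gamma_0^2-\gamma(\rho)^2)},$$
and in particular confirms that $\mathcal K$ depends only on $\rho$, not on $\alpha,\beta$.

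Finally I would substitute the definitions \eqref{def_ks} of $k_1$, $k_3$, $k_4$ and simplify. Factoring one power of $(\gamma_0-\gamma(\rho))$ out of the numerator cancels it against the same factor in $\gamma_0^2-\gamma(\rho)^2=(\gamma_0-\gamma(\rho))(\gamma_0+\gamma(\rho))$, immediately producing the term $\frac{\gamma_0-\gamma(\rho)}{\gamma_0+\gamma(\rho)}\,\gamma''_{12}$. The two remaining $\gamma'(\rho)^2$ contributions, namely $-\frac{1}{2(\gamma_0+\gamma(\rho))}$ and $-\frac{3(\gamma_0-\gamma(\rho))}{2(\gamma_0+\gamma(\rho))^2}$, are then placed over the common denominator $(\gamma_0+\gamma(\rho))^2$; the numerator coefficient $(\gamma_0+\gamma(\rho))+3(\gamma_0-\gamma(\rho))=4\gamma_0-2\gamma(\rho)$ collapses, after dividing by $2$, to $2\gamma_0-\gamma(\rho)$, giving exactly the stated second term. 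The only genuine obstacle is the careful bookkeeping in this last algebraic recombination over the common denominator; everything preceding it is forced by the dimension count.
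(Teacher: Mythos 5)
Your proposal is correct and follows essentially the same route as the paper: the paper likewise observes that $\delta^\perp T^\ast\mathcal L^2(\mathbb R^1)=\{0\}$ forces $\bigwedge^2 T^\ast\mathcal L^2(\mathbb R^1)=\delta^\parallel T^\ast\mathcal L^2\wedge\overline T^\ast\mathcal L^2$, so only $T_1$ survives in \eqref{2ptcurvature} and \eqref{curvdenom}, and the result follows by taking the ratio and substituting \eqref{def_ks}. Your explicit algebraic recombination of the $\gamma'(\rho)^2$ terms over the common denominator is carried out correctly and matches the stated formula.
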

\par
The above function~$\mathcal{K}$ is shown on the left-hand side of Figure~\ref{fig_L2R1} as a function of~$\rho$, for the Gaussian kernel. The coefficient of the term $T_1$ in~\eqref{2ptcurvature} is negative for $\rho$ small and positive for $\rho$ large. The ``cause'' of the positive curvature has been analyzed in \cite{micheli.phd}. Roughly speaking, suppose two points both want to move a fixed distance to the right. Then if they are far enough away, they can just move more or less independently (we shall refer to this as Geodesic~1). {\it Or} (i) the one in back can speed up while the one in front slows down, then (ii) when the pair are close, they move in tandem using less energy because they are close and finally (iii) the back one slows down, the front one speeds up when they near their destinations 
(Geodesic~2). This gives explicit conjugate points (in the sense that two points are joined by distinct geodesics) and is illustrated on the right-hand side of figure Figure~\ref{fig_L2R1} 
(where Geodesics~1 and~2 are represented, respectively, by the
dashed and thick curves).

\subsection{Sources of positive curvature;  obstacle avoidance}
There is another source of positive curvature in $\mathcal L^2$ in higher dimensions. It is clear from equation~\eqref{2ptcurvature} 
and Figure~\ref{fourcoeff} that any positive curvature must come from the term with $T_1$ or the term with $T_4$. As the five terms are orthogonal, we can make all of them but one zero. 
\par
For example, if we choose~$\alpha=(\delta\alpha^\parallel u,
-\delta\alpha^\parallel u)\in\delta^\parallel T^\ast\!\mathcal{L}$
and~$\beta=(\overline{\beta},
\overline{\beta})\in\overline{T}^\ast\!\mathcal{L}$,
then it is the case that~$T_1=(\delta\alpha^\parallel)^2\|\overline{\beta}\|^2$
and it is the only non-zero term.
Then, if $\rho$ is sufficiently large, the sectional curvature for this 2-plane is positive as discussed in the last section.
Figure~\ref{fig_L2R2_conj} illustrates an instance of the existence
of conjugate points for two geodesics  
in~$\mathcal{L}^2(\mathbb{R}^2)$; the momenta~$(p_1,p_2)$ of each of the two trajectories belong at all times to~$\delta^\parallel T^\ast\! \mathcal{L}^2\oplus\overline{T}^\ast\! \mathcal{L}^2$.
\begin{figure}[t]
\center{\includegraphics[height=7.2cm]{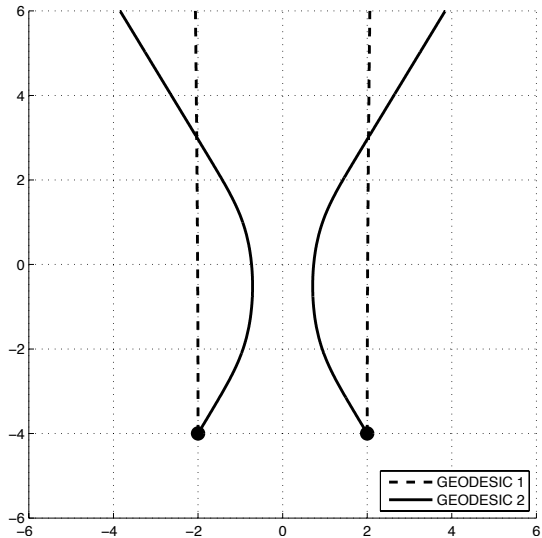}
}
\caption{
Existence of conjugate points in~$\mathcal{L}^2(\mathbb{R}^2)$,
with~$\gamma(x)=
\exp(-\frac{1}{2}x^2)$. Both geodesics 
originate at landmark 
set~$(q^1,q^2)=\big((-2,-4),(2,4)\big)$;
the first one (dashed) has initial 
momentum $(p_1,p_2)=\big((0,10),(0,10)\big)\in \overline{T}^\ast \mathcal{L}^2$
while the second one (continuous) has initial 
momentum $(p_1,p_2)=\big((6,10),(-6,10)\big)
\in \delta^\parallel T^\ast \mathcal{L}^2\oplus\overline{T}^\ast \mathcal{L}^2$.
The geodesic trajectories  exhibit conjugate points.}\label{fig_L2R2_conj}
\end{figure} 
\par
The other possibility is that $T_4$ is the non-zero term, which happens when~$\alpha=(\delta\alpha^\perp,
-\delta\alpha^\perp)\in\delta^\perp T^\ast\!\mathcal{L}$
and~$\beta=(\delta\beta^\perp,
-\delta\beta^\perp)\in\delta^\perp T^\ast\!\mathcal{L}$.
We have~$T_4=2(\| \delta\alpha^\perp \|^2
\| \delta\beta^\perp \|^2-\langle\delta\alpha^\perp,\delta\beta^\perp\rangle^2)$,
and for it to be nonzero it is required
that $D \ge 3$ because $T_4$ is the norm of a 2-form in $
\bigwedge^2\big(\delta^\bot T^\ast \mathcal L\big)$, which has dimension $(D-1)(D-2)/2$. 
The positive curvature of this section is readily seen by considering the geodesics which these vectors generate. The simplest example is the following:
\begin{proposition}
The circular periodic orbit of radius~$r$:
\begin{equation}
\label{circular}
 q^1(t) = (r\cos t, r\sin t), 
 \qquad 
 q^2(t)=-q^1(t),
\end{equation}
$t\in \mathbb{R}$, is a geodesic in~$\mathcal{L}^2(\mathbb{R}^2)$
if and only if  $r$ is the solution of the equation $\gamma_0-\gamma(2r) + r\gamma'(2r)=0$. 
\end{proposition}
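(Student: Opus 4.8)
The plan is to substitute the proposed orbit directly into the geodesic equations~\eqref{2ptgeodeq} and read off the resulting algebraic constraint. First I would pass to means and semi-differences. For the given curve $\overline q = \tfrac12(q^1+q^2)\equiv 0$ and $\delta q = \tfrac12(q^1-q^2) = (r\cos t, r\sin t)$, so the separation $\rho = \|q^1-q^2\| = 2r$ is constant in $t$; consequently $\gamma(\rho)$, $\gamma'(\rho)$ and the scalar $\gamma_0-\gamma(\rho)$ are all constant along the curve. By the general theory the curve is a (cogeodesic, hence affinely parametrized) geodesic if and only if there exist momenta $(\overline p,\delta p)$ making $(\overline q,\delta q,\overline p,\delta p)$ a solution of the four equations~\eqref{2ptgeodeq}; note that the Euclidean speed $\|\dot{\delta q}\|=r$ and the metric coefficients are both constant, so there is no reparametrization subtlety.

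Next I would dispose of the ``mean'' equations. Since $\overline q\equiv 0$ we have $\dot{\overline q}=0$, and because $\gamma_0+\gamma(\rho)>0$ (the kernels of interest have $\gamma$ strictly decreasing, so $\gamma(\rho)<\gamma_0$ and in particular $\gamma_0\pm\gamma(\rho)\neq 0$), the first equation forces $\overline p=0$; the equation $\dot{\overline p}=0$ then holds automatically. Thus the center of mass sits at the origin carrying no linear momentum, and the mean part imposes no further condition.

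The content is in the ``difference'' equations. Treating $\gamma_0-\gamma(\rho)$ as a constant, the equation $\dot{\delta q}=(\gamma_0-\gamma(\rho))\delta p$ determines $\delta p=(\gamma_0-\gamma(\rho))^{-1}\dot{\delta q}$; differentiating once more and using $\ddot{\delta q}=-\delta q$ gives $\dot{\delta p}=-(\gamma_0-\gamma(\rho))^{-1}\delta q$. On the other hand, substituting $\overline p=0$ and $\|\delta p\|^2=(\gamma_0-\gamma(\rho))^{-2}\|\dot{\delta q}\|^2=r^2(\gamma_0-\gamma(\rho))^{-2}$ into the last equation of~\eqref{2ptgeodeq} expresses $\dot{\delta p}$ as a second scalar multiple of $\delta q$. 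Since $\delta q\neq 0$, the curve is a geodesic precisely when the two scalar coefficients of $\delta q$ agree; equating them, clearing the common factors, and recording $\rho=2r$ reduces everything to the single scalar relation on $r$ asserted in the statement. Because this one relation is simultaneously necessary and sufficient, it yields the claimed ``if and only if.''

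I expect the only real obstacle to be the bookkeeping of the constant prefactors --- in particular keeping $\|\delta p\|^2$, the factor $\gamma'(\rho)/\rho$, and the relation between the orbit radius $r$ and the separation $\rho$ straight --- so that the final cancellation produces exactly the stated combination of $\gamma_0$, $\gamma$ and $\gamma'$. An alternative, slightly longer route uses the conservation laws of Step~4: a constant $\rho$ forces $F(\rho)=0$ with $F$ as in~\eqref{solverho}, and rewriting the constants $\mathcal H$ and $\omega$ in terms of $r$ recovers the same equation; I would nonetheless prefer the direct substitution above, since it avoids introducing $\mathcal H$ and $\omega$ altogether.
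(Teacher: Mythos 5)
Your route is exactly the paper's: the published proof likewise substitutes the orbit \eqref{circular} into the geodesic equations \eqref{2ptgeodeq}, observes $\overline q\equiv 0$, $\overline p\equiv 0$, $\rho\equiv 2r$, $\delta p=(\gamma_0-\gamma(\rho))^{-1}\dot q^1$, and then passes in one line to the stated scalar equation. Everything you do up to the last step is correct: the mean equations force $\overline p=0$, the third equation determines $\delta p$, and comparing the two resulting expressions for $\dot{\delta p}$ is the right criterion for the "if and only if".

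The genuine gap is the one step you dismiss as bookkeeping. Carrying it out: from $\ddot{\delta q}=-\delta q$ the third equation gives $\dot{\delta p}=-(\gamma_0-\gamma(2r))^{-1}\,\delta q$, while the fourth equation with $\overline p=0$, $\rho=2r$ and $\|\delta p\|^2=r^2(\gamma_0-\gamma(2r))^{-2}$ gives $\dot{\delta p}=\frac{2\gamma'(2r)}{2r}\cdot\frac{r^2}{(\gamma_0-\gamma(2r))^2}\,\delta q=\frac{r\gamma'(2r)}{(\gamma_0-\gamma(2r))^2}\,\delta q$. Equating the two coefficients of $\delta q$ yields
\begin{equation*}
\gamma_0-\gamma(2r)+r\,\gamma'(2r)=0,
\qquad\text{i.e.}\qquad
2\bigl(\gamma_0-\gamma(\rho)\bigr)+\rho\,\gamma'(\rho)=0\ \text{ at the separation }\rho=2r,
\end{equation*}
and \emph{not} the asserted relation $\gamma_0-\gamma(r)+2r\gamma'(r)=0$. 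These two equations are genuinely inequivalent, not a relabeling of the variable: for the Gaussian kernel $\gamma(x)=e^{-x^2/2}$ the asserted equation reads $e^{u}=1+4u$ (with $u=r^2/2$) and has a positive root, while the derived one reads $e^{s}=1+s$ (with $s=2r^2$) and has none. The asserted form is what one obtains by conflating the separation $\rho=2r$ with the radius $r$ in the last equation of \eqref{2ptgeodeq}; this factor-of-two slip occurs in the paper's own one-line proof, so your proposal faithfully reproduces the published argument together with its flaw. As a proof of the literal statement, however, your final claim fails: performing the cancellation you promise produces a different equation, which cannot be absorbed into "constant prefactors", so you would have to either prove the corrected equation (and note that the statement needs amending) or explain the discrepancy. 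A side remark: your fallback route via $F$ in \eqref{solverho} is also inconclusive as stated, because $F(\rho(0))=0$ holds automatically whenever $\dot\rho(0)=0$ (it encodes only the instantaneous vanishing of $\dot\rho$), so keeping $\rho$ constant for all time requires more than a single zero of $F$.
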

\begin{proof}
For orbit~\eqref{circular}
it is the case that~$\overline q\equiv0$,
$\delta q = q^1$ and $\rho\equiv2r$; also
$p_1=\big(\gamma_0-\gamma(\rho)\big)^{-1}\dot{q}^1$ and
$p_2=-p_1$, so that $\overline{p}=0$ and
$\delta p = p_1$. The first three equations 
of~\eqref{2ptgeodeq} can easily be checked;
the fourth one holds if and only if~$\gamma_0-\gamma(2r) + r\gamma'(2r)=0$.
\end{proof}
(The above result was also proven by Fran\c{c}ois-Xavier Vialard of Imperial College, London.)
Orbit~\eqref{circular} has the property that at time $\pi$, $q^1$ and $q^2$ interchange their positions: it is a geodesic from the set of landmark points~$\big((r,0), (-r,0)\big)\in \mathcal{L}^2(\mathbb{R}^2)$ to the set $\big((-r,0), (r,0)\big)
\in \mathcal{L}^2(\mathbb{R}^2)$. But if these points live in~$\mathbb{R}^3$, they can move around each other in any plane containing the points. Thus we have a circle of geodesics in $\mathcal{L}^2(\mathbb{R}^3)$:
$$ q^1(t) = (r\cos t, r\cos\theta \sin t , r\sin \theta \sin t ),
\qquad 
q^2(t) = -q^1(t) $$
all connecting $\big((r,0,0), (-r,0,0)\big)$ 
to $\big((-r,0,0), (r,0,0)\big)$,
for any~$\theta\in[0,2\pi)$. This is exactly like all the lines of fixed longitude connecting the north and south pole on the 2-sphere and means that one set of landmark points is a conjugate point  of the other in $\mathcal{L}^2(\mathbb{R}^3)$. This is the simplest example of how geodesics between landmark points must avoid collisions and so make a choice between different possible detours, leading to conjugate points and thus positive curvature.
\section{Conclusions}
We believe that $\mathcal{L}^N(\mathbb{R}^D)$, the Riemannian manifold of~$N$ landmark points in~$D$ dimensions, is a fundamental object for differential geometry and that we have only scratched the surface in its study. We started with a basic formula which computes sectional curvature of a Riemannian manifold in terms of the cometric, its partial derivatives, and the metric itself (but not its derivatives). This is particularly adapted to computing curvature for manifolds which arise as submersive quotients of other manifolds and gives O'Neill's formula as a corollary. We then applied this to derive a formula for sectional curvature of the space of landmarks. This formula is not simple but, like Arnold's formula for curvature of Lie groups under left- (or right)-invariant metrics, splits into a sum of four terms. The four terms involve interesting intermediate expressions in the two vectors (or co-vectors) which define the section and which have relatively simple geometric interpretations. We called these the {\it mixed force}, the {\it discrete vector strain}, the {\it scalar compression} and the {\it landmark derivative}. The geodesic equation in its Hamiltonian form is quite simple and involves the force as expected. We also gave several concrete examples to illustrate the nature of these geodesics.
\par
Finally, we have examined in detail the case of geodesics in which only one or two landmark points have non-zero momenta, and computed the curvature in sections spanned by such geodesics. We found that in this case there are essentially two sources of positive curvature. One can understand them through the non-uniqueness of geodesics joining two $N$-tuples: the first sort of non-uniqueness is caused by the two points with non-zero momentum choosing between converging in the middle of the geodesic (``car-pooling'') or moving independently and not converging; the second occurs only when~$D\geq 3$ and arises when the same two points need to get around each other and must choose on which side to pass (if $D=2$, this sort non-uniqueness also occurs but comes from non-trivial topology, not curvature). 
\par
One of the most important questions left open is explore how prevalent positive curvature is in general, i.e.\ for geodesics in which all points carry momentum. Answering this question is central to applications of landmark space in which geodesics are actually computed. One might hope that the picture for two momenta is true in general but this is far from clear. It seems interesting to explore whether there is some sort of ``index'' for curvature forms --- a numerical measure of how much positive vs.\ negative curvature is present. Another important question is to explore the shape of the coefficients in~\eqref{2ptcurvature} for different kernels. More generally, what is the impact of different kernel types (Bessel, Gaussian, Cauchy) on the corresponding geodesics? Finally, note that all kernels have a length constant built into their definition so the geometry of the space of landmarks is far from scale invariant. Thus one should analyze what happens asymptotically when the points are very close relative to this constant or are very far from each other.
\bibliography{MMM1-bib}
\bibliographystyle{abbrv}
\end{document}